\documentclass[titlepage]{article}
\usepackage[utf8]{inputenc}
\usepackage[affil-it]{authblk}

\usepackage{amsmath,bm}
\usepackage{amsfonts}
\usepackage{amssymb}
\usepackage{hyperref}
\usepackage{mathrsfs}
\usepackage[margin=1in]{geometry}
\usepackage[inline]{enumitem}
\usepackage{caption}
\usepackage{xcolor}
\usepackage{wrapfig}
\usepackage{exscale,relsize}

\usepackage{tikz}
\usetikzlibrary{matrix,calc,arrows}

\small\normalsize

\newtheorem{theorem}{Theorem}[section]
\newtheorem{lemma}[theorem]{Lemma}
\newtheorem{proposition}[theorem]{Proposition}
\newtheorem{corollary}[theorem]{Corollary}
\newtheorem{claim}{Claim}
\newtheorem{conjecture}[theorem]{Conjecture}
\newtheorem{observation}[theorem]{Observation}

\newenvironment{proof}[1][Proof:]{\begin{trivlist}
\item[\hskip \labelsep {\bfseries #1}]}{\end{trivlist}}
\newenvironment{definition}[1][Definition:]{\begin{trivlist}
\item[\hskip \labelsep {\bfseries #1}]}{\end{trivlist}}
 \newcounter{nona}[theorem]
\newcounter{nonanona}[theorem]    
\renewcommand{\thenona}{\Alph{nona}}

\newenvironment{noname}{\begin{trivlist}\item[]\refstepcounter{nona}%
        {\bf (\thenona)\ \ \ }\nobreak\noindent\sl\ignorespaces}{%
        \ifvmode\smallskip\fi\end{trivlist}}

\renewenvironment{proof}[1][Proof:]
{\begin{trivlist}
\item[\hskip \labelsep {\bfseries #1}]} {\qquad\hspace*{\fill}$\square$\end{trivlist}}


\newcommand{\bB}{\mathcal{B}}
\newcommand{\B}{\mathfrak{B}}

\newcommand{\F}{\mathcal{F}}
\newcommand{\frakF}{\mathfrak{F}}

\newcommand{\C}{\mathcal{C}}

\newcommand{\T}{\mathcal{T}}

\renewcommand{\epsilon}{\varepsilon}
\renewcommand{\tilde}{\widetilde}

\newcommand\numberthis{\addtocounter{equation}{1}\tag{\theequation}}

\title{Rota's Basis Conjecture for Matroids with Density Close to One }

\author[1]{Sean McGuinness}

\affil[1] {Thompson Rivers University}

\begin{document}

\maketitle
\begin{abstract}
Rota's basis conjecture (RBC) states that given a collection $\bB$ of $n$ bases in a matroid $M$ of rank $n$, one can always find $n$ disjoint rainbow bases with respect to $\bB$.
We show that if $M$ is a matroid having $n + k$ elements, then one can construct $n - k^3$ disjoint rainbow bases.

\vspace{.2in}

\noindent{\sl Keywords}\,:  matroid, basis, Rota's basis conjecture, girth.

\bigskip\noindent
{\sl AMS Subject Classifications (2012)}\,: 05D99,05B35.
\end{abstract}

\section{Introduction}

For basic concepts and notation pertaining to matroids, we follow Oxley \cite{Oxley}, and for graphs, we shall follow Bondy and Murty \cite{Bondy&Murty}. Let $M$ be a matroid of rank $n$. A {\bf base sequence} of $M$ is an $m$-tuple $\bB = (B_1, \ldots B_m) \in \B(M)^m$ of bases of $M$, where for each $i \in \{1,2, \ldots, m\}$, we think of the base $B_i$ as ``coloured'' with colour $i$. A {\bf rainbow base} ({\bf RB}) with respect to $\bB$ is a base of $M$ all of whose elements have different colours. Two rainbow bases with respect to $\bB$ are said to be {\bf disjoint} if for each colour $c$, if both bases have elements of colour $c$, then these elements are distinct. We let $t_M(\bB)$ denote the cardinality of a largest set of disjoint rainbow bases with respect to~$\bB$, where the subscript is dropped when $M$ is implicit.
In 1989, Rota made the following conjecture, first communicated in \cite{HuangRota}:

\begin{conjecture}[Rota's Basis Conjecture (RBC)]\label{conj_rbc}
Let $M$ be a matroid of rank $n$, and let $\bB$ be a base sequence of $M$ consisting of $n$ bases. Then $t(\bB)=n$.
\end{conjecture}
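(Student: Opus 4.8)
The plan is necessarily a programme rather than a complete proof, since Conjecture~\ref{conj_rbc} is open; it combines the reductions available for all matroids with the two partial attacks, algebraic and probabilistic, that currently come closest, and tries to isolate the gap between them.

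\textbf{Reductions and reformulation.} First I would pass to a minimal counterexample $M$ of smallest rank $n$. Such an $M$ is simple (loops and parallel elements can be pruned without affecting $t(\bB)$) and connected, since the conjecture is closed under direct sums: a witnessing array for $M_1\oplus M_2$ is the colour-wise concatenation of witnessing arrays for $M_1$ and for $M_2$, using that each colour class of $\bB$ splits across the two summands. It is also convenient to use the ``Rota array'' reformulation: $t(\bB)=n$ holds if and only if there is an $n\times n$ array whose $i$-th row is the set $B_i$ (each element of $B_i$ appearing once) and each of whose columns is a base of $M$. At the level of a single rainbow base there is no obstruction — for every set $S$ of colours the rank of $\bigcup_{i\in S}B_i$ is $n\ge|S|$, so Rado's theorem yields an independent rainbow transversal — and the whole difficulty is to produce $n$ such bases simultaneously and disjointly, i.e.\ to fill the entire array.

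\textbf{Two complementary attacks.} On $\mathbb{C}$-representable matroids (equivalently, matroids representable over a field of characteristic $0$ or of large characteristic) I would invoke the Huang--Rota reduction: representing each base $B_i$ as the columns of an invertible $n\times n$ matrix, form the alternating sum, over all fillings of the array whose rows are the $B_i$, of the products of the resulting column determinants, weighted by the product of the row signs; this sum equals, up to a nonzero scalar, the difference between the numbers of even and odd Latin squares of order $n$. When that difference is nonzero some term is nonzero, forcing a Rota array; this is the Alon--Tarsi conjecture, known in infinitely many ranks and in particular for $n=p\pm1$ with $p$ an odd prime, so RBC follows for representable matroids of those ranks, genericity being removed by a standard specialisation argument. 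For arbitrary matroids and arbitrary $n$ I would instead build the simplicial complex of ``partial Rota arrays'' — partial fillings extendable one column at a time, feasibility of each extension being a matroid-intersection condition — bound its topological connectivity by a Hall/Aharoni--Berger-type estimate, and extract a nearly complete filling, obtaining $t(\bB)\ge(1-o(1))n$; an absorption step, reserving a small random sub-array at the outset to be rebuilt around the leftover $o(n)$ elements, would be the mechanism for pushing this to the exact value.

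\textbf{The main obstacle.} The decisive point is closing the last $o(n)$. In the probabilistic line this needs the ``switching graph'' on near-complete Rota arrays to be connected enough for the absorbing structure to do its work — a pseudorandomness property of the exchange structure of $M$ that fails for some matroids and must be circumvented; in the algebraic line it needs a gluing lemma assembling Rota arrays for the parts of a composition $n=n_1+\dots+n_r$, with each $n_j$ of Alon--Tarsi type, into one of size $n$ compatibly with the matroid, which is itself a delicate matroid-intersection statement. I expect the genuine content of RBC to be concentrated in exactly one such exact-exchange lemma, and that neither route settles it unconditionally; a realistic line of progress is therefore to establish the lemma first where the exchange structure is rigid — strongly base-orderable matroids, where it is essentially automatic — and then to propagate it through graphic and binary matroids by exploiting their minor structure, in the spirit of the present paper.
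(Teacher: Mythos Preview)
The statement is Conjecture~\ref{conj_rbc}, and the paper does not prove it: it is stated explicitly as an open conjecture, and the paper's actual contribution is Theorem~\ref{the-main}, a partial result for binary matroids $M$ with $|E(M)|=n+k$ giving $t(\bB)\ge n-b_0(k)$. You acknowledge this yourself, so your submission is a research programme, not a proof, and cannot be evaluated as one.

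That said, your programme is essentially disjoint from what the paper does. The paper uses neither the Alon--Tarsi algebraic route nor a topological/probabilistic near-complete-filling with absorption; instead it exploits the very restricted setting $|E(M)|=n+k$ to pass to an auxiliary rank-$k$ binary matroid $N$, encodes rainbow bases as matchings in a bipartite graph constrained by ``deficit inequalities'' indexed by flats of $N$, and constructs rainbow bases one at a time, at each step repairing the current matching via carefully chosen chains of alternating paths so that all deficit inequalities persist for the next step. The argument is specific to binary matroids (it relies on the fact that every cocircuit of $N$ is a symmetric difference of fundamental cocircuits) and to near-unit density, and it stops $O_k(1)$ short of $n$ rather than reaching $n$. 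Your closing sentence gestures at ``the spirit of the present paper,'' but none of the paper's concrete machinery appears in your outline. More importantly, the items you yourself label ``the main obstacle'' --- connectivity of a switching graph sufficient for absorption, or a gluing lemma across an additive decomposition $n=n_1+\dots+n_r$ --- are exactly where your proposal offers no argument; you have located the gap accurately but supplied no mechanism to close it, so the proposal does not constitute a proof of the conjecture.
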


Due to the work of Drisko \cite{Drisko}, Glynn \cite{Glynn}, and Onn \cite{Onn} on the Alon-Tarsi conjecture \cite{Alon&Tarsi}, RBC is known to be true for $\mathbb{F}$-representable matroids of rank $p \pm 1$, where $\mathbb{F}$ is a field of characteristic $0$, and $p$ is an odd prime. See \cite{Friedman&McGuinness} for an overview of these results.
Chan \cite{Chan} and Cheung \cite{Cheung} proved RBC for matroids of rank $3$ and $4$, respectively. In \cite{Geelen&Humphries}, it was shown that RBC is true for sparse paving matroids and Wild \cite{Wild} proved the conjecture for strongly base-orderable matroids.  

While RBC has attracted a significant amount of attention (see the polymath project \cite{Chow}) and a number of different ideas have yielded different results, there are still very few classes of matroids for which it is known to be true.  Even attempts to find lower bounds on the number of disjoint rainbow bases has been challenging.   The bound established by Geelen and Webb \cite{Geelen&Webb}, who proved that $t(\bB) \ge \sqrt{n-1}$, was the only bound known until recently when it was improved by 
Dong and Geelen \cite{Dong&Geelen}, who showed that $t(\bB) \ge \frac n{7\log n}$.   Around the same time, a significant breakthrough was made on RBC by by Buci\'c et al. \cite{Bucic} who showed that $t(\bB) \geq (1/2-o(1))n$, provided $n$ is large enough.    
While a proof for RBC seems out of reach at the moment, a natural question raised in \cite{Bucic} asks whether one can improve the $(\frac 12 - o(1))n$ lower bound to say $(1 - o(1))n$.  This is a very challenging problem given that even finding a constant $c> \frac 12$ for which $t(\bB) \geq cn$ would be a significant step.   As an example of when the $n-o(n)$ bound can be achieved,
 it was shown in \cite{Friedman&McGuinness} that if $M$ is a rank-$n$ matroid having girth at least $n-o(\sqrt{n})$, and no element of $M$ belongs to more than $o(\sqrt{n})$ bases in $\bB$, then one can find at least $n - o(n)$ disjoint rainbow bases with respect to $\bB$.   We mention also an interesting recent result in \cite{Pokrovskiy} where it is shown that one can find at least $n-o(n)$ disjoint rainbow independent sets of size at least $n-o(n).$ 

At this point in time, RBC remains unresolved even for matroids having rank $n$ and a small number of elements.   None of the methods introduced thus far apply to rank-$n$ matroids with $n+k$ elements, where $k$ is fixed (for example, $k=5$). This is true even for graphic matroids.
Thus it is a natural question to ask, if for fixed $k$, is RBC true for rank-$n$ matroids having $n+k$ elements?  Can one find even $n - o(n)$ disjoint rainbow bases in this case? 
The main result of this paper is the following:

\begin{theorem}
Let $\bB$ be a sequence of $n$ bases in a matroid of rank $n$ having $n+k$ elements.  Then $t(\bB) \ge n-k^3.$
\label{the-main}
\end{theorem}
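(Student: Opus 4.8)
The idea is to exploit the fact that a binary matroid of rank $n$ on $n+k$ elements is structurally almost free, and to push the construction of rainbow bases onto a ``core'' whose size is bounded by a function of $k$ alone.

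First I would carry out a structural reduction. Loops of $M$ lie in no base and can be deleted, which only decreases $k$. Since $M^*$ has rank $k$, every cocircuit of $M$ has at most $k+1$ elements; writing $C$ for the set of coloops of $M$ and $A=E(M)\setminus C$ for the active part, the matroid $(M|A)^*=M^*/C$ is loopless of rank $k$, so its simplification has at most $2^k-1$ elements. Dualising, $A$ partitions into at most $2^k-1$ \emph{series classes} $S_1,\dots,S_s$, and $M|A$ is a series blow-up of a binary matroid $N$ with $|E(N)|\le 2^k-1$ and $\mathrm{rank}(N)=|E(N)|-k$: every base of $M$ contains $C$, contains $s-k$ of the series classes entirely, and omits exactly one element from each of the remaining $k$ classes, these being the classes that index the $k$ non-basis elements of the base of $N$ the given base induces. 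In particular every colour $i$ is labelled by a base $\beta_i$ of the \emph{bounded} matroid $N$ together with a choice of omitted element in each of its $k$ deficient classes.

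I would then build $d:=n-b_0$ disjoint rainbow bases in two stages. The coloops are a free resource: a coloop lies in every $B_i$ and in every base of $M$, hence occurs in every member of any family of disjoint rainbow bases and may be assigned an arbitrary colour, subject only to global distinctness; so I first fix a balanced, Latin-rectangle-type colouring of the coloops over the $d$ prospective rainbow bases. This reduces the problem to the active part: choose bases $\gamma_1,\dots,\gamma_d$ of $M|A$ and colour each $\gamma_p$ using exactly the $\mathrm{rank}(M|A)$ colours not spent on coloops in rainbow base $p$, using only pairs $(e,i)$ with $e\in B_i\cap A$ and with distinct active elements per colour. Since the elements of a series class are interchangeable — a circuit of $M|A$ meeting a series class contains all of it — a class of size $m$ acts like a single element of $N$ usable up to $m$ times, so the reduced problem is an RBC-type problem, with capacities and ``extra'' colours, on the \emph{bounded} matroid $N$. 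Because $N$ and all of its bases are bounded in terms of $k$, one can group the colours by the pair (induced base $\beta_i$ of $N$, pattern of omitted series elements), pass to the dominant group, of size $n-O_k(1)$, on which the independence constraint coming from $N$ collapses to a single fixed base, and complete the remaining assignment by a Hall/flow argument; the boundedly many colours outside the dominant group, together with the deficiency of $N$, are discarded into $b_0$.

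The main obstacle is the bookkeeping that keeps the total loss independent of $n$. On the core one genuinely must lose rainbow bases — if, say, $N\cong U_{r,r+1}$ only boundedly many disjoint bases are available — so one has to verify that every source of loss (the non-dominant colour groups, the deficiency of $N$, the rounding in the coloop and series-class assignments) is $O_k(1)$, and in particular that the series classes, though unbounded in size, cost only a bounded amount because their internal interchangeability lets the extra colours freed by the coloops absorb the demand. Pinning down which $b_0$ colours to sacrifice so that the remaining $n-b_0$ rainbow bases can be completed simultaneously, and checking the resulting Hall-type feasibility conditions, is the technical heart of the proof.
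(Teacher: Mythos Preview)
Your structural reduction is sound and in fact isolates the same auxiliary rank-$k$ binary matroid the paper uses (there built as the image of $E(M)$ under $e\mapsto(\mathbf{1}_{C_1}(e),\dots,\mathbf{1}_{C_k}(e))\in\mathbb Z_2^k$). The gap is the step ``pass to the dominant group, of size $n-O_k(1)$.'' Grouping the colours by the induced base $\beta_i$ of $N$, pigeonhole only yields a group of size at least $n/|\B(N)|$, not $n-O_k(1)$: nothing forces the given bases $B_1,\dots,B_n$ to concentrate on a single base of $N$. For a concrete obstruction, let $M$ be the cycle matroid of $K_4$ with every edge subdivided $m$ times; then $k=3$, your $N\cong M(K_4)$ has sixteen bases, and the $B_i$ can be chosen so that each of the sixteen spanning trees is induced by about $n/16$ colours. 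Refining the grouping to include the pattern of omitted series elements only makes matters worse, since the number of such patterns grows with $n$. Your Hall/flow endgame is predicated on all but $O_k(1)$ colours sharing a common $\beta$, so once the $\beta_i$ are genuinely spread out the ``reduced'' problem on $N$ still carries the full RBC-type coordination difficulty, and the sketch gives no mechanism for handling it. This is a missing idea, not bookkeeping.

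The paper avoids this obstacle by an entirely different, non-structural argument. It builds the rainbow bases incrementally: after peeling off $p$ of them it maintains, for every flat $F$ of $N$, a \emph{deficit inequality} $\delta_p(F)\le r_N(F)(n-p)$ on the residual bipartite colour-element graph; when these hold, matroid intersection (transversal matroid versus $M$) produces a $(p{+}1)$st rainbow base. The technical core is to show that, as long as $n-p$ exceeds an explicit constant $c_0+c_1$ depending only on $k$, this new matching can be modified by chains of alternating paths (``path-chains'') so that the deficit inequalities persist to the next round. The constant $b_0$ in the theorem is precisely the threshold below which this alternating-path machinery can no longer be guaranteed to succeed.
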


As a consequence of the above theorem we obtain:

\begin{corollary}
Let $\bB$ be a sequence of $n$ bases in a matroid of rank $n$ having $n+o(n^{\frac 13})$ elements.
Then $t(\bB) \ge n - o(n).$\label{cor-main}
\end{corollary}

%
The method of proof of the main theorem is entirely new.    In constructing disjoint rainbow bases,  we ensure that at each step that a certain function defined on the flats of a rank-$k$ matroid satisfies certain inequalities.  When such inequalities are satisfied, one can use the matroid intersection theorem (see \cite{Oxley}) to guarantee the existence of the next rainbow base.  In essence, the rainbow bases are constructed one-at-a-time largely by the special selection of matchings in a bipartite graph.  Below we give a more detailed sketch.   

\subsection{Overview of the proof}

Let $M$ be a binary matroid of rank $n$ where $\varepsilon(M)= |E(M)| = n + k.$  
Let $\bB = ( B_1, \dots ,B_n )$ be a sequence of coloured bases of $M$.   Let $m=n+k$ and let $E(M) = \{ e_1, \dots ,e_{m} \}.$   

We shall define a bipartite graph $G = G(\bB)$ having bipartition $(U,V)$ where $U = \{ u_1, \dots ,u_n \}$, each vertex $u_i$ representing $B_i$, and $V = E(M)$.  We let $V = \{ v_1, \dots ,v_m \}$ where $v_i = e_i,\ i = 1, \dots ,m.$ For all $i,j$, $u_i$ and $v_j$ are joined by an edge in $G$ if and only if $e_j \in B_i.$
Note that $d_G(u_i) = n,\ i = 1, \dots ,n$ and $d_G(v_i) \le n,\ i = 1, \dots ,n.$
We colour the edges of $G$ with colours $1,2, \dots ,n$ such that, for $i = 1, \dots ,n$, the edges incident to $u_i$ receive colour $i.$ A rainbow basis $B$ in $M$ corresponds to a {\it matching} $W$ in $G$ which saturates all vertices in $U$ and those elements in $V$ saturated by $W$ correspond to $B$.  

Suppose that $G$ can be partitioned into $n$ matchings $W_1, \dots ,W_n$, where, for all $i$, $W_i$ corresponds to a rainbow basis of $M$.    For $i = 1, \dots ,n,$ let $G_i = G - W_1 - \dots - W_i. $   
We define the {\it deficit} of a vertex $v\in V$ in $G_i$ to be $\delta_i(v) = n-i - d_{G_i}(v).$ Such a decomposition of $G$ into matchings $W_1, \dots ,W_n$ necessitates that the graph $G_i$ satisfy certain conditions.  First, we must have that for all $v\in V,$ $\delta_i(v) \ge 0.$  For if $\delta_i(v) <0$ for some vertex $v\in V,$ then $d_{G_i}(v) > n-i.$  However, this is impossible if the edges incident to $v$ in $G_i$ belong to the matchings $W_{i+1}, \dots , W_n.$   In addition, it can be shown that the deficit of a set cannot be too large.  More precisely, for a set $A \subseteq E$ let $\delta_i(A) = \delta_i(V_A).$  It can be shown that for every subset $A \subseteq E,$  $\delta_i(A) \le r_N(A)(n-i)$ where $N$ is a matroid of rank $k$ derived from $M.$  We refer to these as {\it deficit inequalities}.

Suppose now that we have only constructed disjoint matchings $W_1, \dots ,W_i$, each corresponding to a rainbow basis.   A  key observation is that if $\delta_i$ satisfies a deficit inequality for each $A\subseteq E,$ then the matroid intersection theorem implies that there exists a matching $W_{i+1}$ in $G_i$ corresponding to a rainbow basis in $M.$   Thus to guarantee that $G_i$ has such a matching it suffices to choose the previous matchings $W_1, \dots ,W_i$ so that $\delta_i$ satisfies these deficit inequalities.  This is a daunting task. However, it turns out that if the deficit inequalities hold for $\delta_{i-1}$ and $n-i$ is large enough, then
one can choose $W_i$ carefully so that the deficit inequalities will hold for $\delta_i$.   Thus to construct disjoint rainbow bases, it suffices to show that at each step, one can first choose $W_i$ corresponding to a rainbow basis, and then alter $W_i$ so that the deficit inequalities hold for $\delta_i.$   We alter $W_i$ using sequences of {\it alternating paths} in $G_{i-1}$ where one swaps the matching edges on the paths with the non-matching edges.
The major challenge is to alter the matchings so that the resulting matching corresponds to a rainbow basis.  Secondly, the new matching must improve on the previous matching by reducing the number of deficit inequalities which are unsatisfied.  We show that this can be achieved by constructing a {\it path-chain} of alternating paths.  The existence of such path-chains will be dependent on $n-i$ being much larger than $k$ (more specifically, $n- i \ge k^3$).  After each step $i$, the construction of the next matching $W_{i+1}$ really only depends on the judicious selection of $W_i$ and as such one can view the rainbow bases as being constructed one-at-a-time.  

\section{Notation}
For a positive integer $k$, let $[k]$ denote the set $\{ 1, \dots ,k \}$.  For a graph $G$, let $d_G(v)$ denote the degree of $v.$  For a subset $X \subseteq V(G)$, let $\partial_G(X)$ denote the set of edges with one endpoint in $X$ and the other in $V(G) \backslash X.$  We let $d_G(X) = |\partial_G(X)|$. For sets $X$ and $Y$ of vertices, we let $E_G(X,Y)$ denote the set of edges with one endpoint in $X$ and the other in $Y$ and let $e_G(X,Y) = |E_G[X,Y]|.$
  For a subset $X$ of vertices, let $N_G(X)$ denote the set of neighbours of vertices in $X$ belonging to $V(G)\backslash X.$  For a set $Y \subset V(G)$, we let $N_G(X,Y) = N_G(X) \cap Y.$
  
  For a directed graph $\overrightarrow{D},$ we let $N_{\overrightarrow{D}}^+(v)$ (resp. $N_{\overrightarrow{D}}^-(v)$) denote the set of out-neighbours (resp. in-neighbours) of $v.$

For a set $X$, we let $\mathbf{1}_X$ denote the indicator function for $X.$  For sets $X$ and $Y$, let $X \triangle Y$ denote the symmetric difference of $X$ and $Y.$

For a matroid $M$, we let $\varepsilon(M) = |E(M)|,$ the number of elements in $M$.  For convenience, if we create a new basis from a basis $B$ by deleting $X \subseteq B$ and adding the elements of a set $Y,$ then we denote the resulting basis by $B -X+Y.$  In the case where $X= \{ x_1, \dots ,x_k \}$ and $Y = \{ y_1, \dots ,y_k \},$ we will often write the new basis as $B - x_1 - \cdots - x_k + y_1 + \cdots + y_k.$
For example, if $X = \{ e \}$ and $Y = \{ f \},$ then the new basis is just $B-e+f.$

For a subset $A \subseteq E(M),$ we let $\overline{A}$ denote the set $E(M)\backslash A.$

In several places in the paper, we shall use the (easily proven) observation that if $B$ is a basis in a matroid, $C^*$ is a cocircuit, and $e\in \overline{B}$ is such that $(B+e) \cap C^* = \{ e,f \},$ then $B - f + e$ is a basis.

\section{The deficit function}
Let $M$ be a matroid of rank $n$ where $\varepsilon(M)= |E(M)| = n + k.$  
Let $\bB = ( B_1, \dots ,B_n )$ be a sequence of coloured bases of $M$.   Let $m=n+k$ and let $E(M) = \{ e_1, \dots ,e_{m} \}.$  

We shall define a bipartite graph $G = G(\bB)$ having bipartition $(U,V)$ where $U = \{ u_1, \dots ,u_n \}$, each vertex $u_i$ representing $B_i$, and $V = E(M)$.  We let $V = \{ v_1, \dots ,v_m \}$, where $v_i = e_i,\ i = 1, \dots ,m.$  For all $i,j$, the vertices $u_i$ and $v_j$ are joined by an edge in $G$ if and only if $e_j \in B_i.$
Note that $d_G(u_i) = n,\ i = 1, \dots ,n$ and $d_G(v_i) \le n,\ i = 1, \dots ,n.$
We colour the edges of $G$ with colours $1,2, \dots ,n$ such that, for $i = 1, \dots ,n$, the edges incident to $u_i$ receive colour $i.$ A rainbow basis $B$ corresponds to a {\it matching} in $G$ which saturates $U$ and those elements in $V$ corresponding to $B$. 
%

For a subset of edges $A\subseteq E(G)$, let $U_A \subseteq U$ (respectively, $V_A \subseteq V$) denote the set of endvertices  in $U$ (resp. $V$) of edges in $A.$ 

\begin{definition} For a graph $H$ of maximum degree $d$, we define the $\mathbf{H}${\bf-deficit} of a vertex $v$ to be $\delta_H(v):= d - d_H(v).$  For a subset $X\subseteq V(H)$, we define the $H$-deficit of $X$ to be
$\delta_H(X) := \sum_{x\in X}\delta_H(x).$ We define the deficit of $H$ to be $\delta(H) := \delta(V(H)).$\end{definition}

Suppose that $G$ can be partitioned into $n$ matchings $W_1, \dots ,W_n$, where, for all $i$, $V_{W_i}$ is a basis of $M$;  that is, $V_{W_i}$ is a rainbow basis.  For $i = 1, \dots ,n,$ let $G_i = G - W_1 - \dots - W_i. $   For convenience, we let $\delta_i$ denote the deficit function $\delta_{G_i}$ and we let $d_i(v) = d_{G_i}(v).$
Note that for all $j$, $d_i(u_j) = n-i.$  Also, since for each $v_j \in V,$ the edges incident to $v_j$ in $G_i$ belong to the matchings $W_{i+1}, \dots ,W_n,$ it follows that for all $j,$ $d_i(v_j) \le n-i.$
Thus $G_i$ has maximum degree $n-i.$ 

\begin{lemma}
The deficit of $G_i$ equals $k(n-i).$\label{obs1.1}
\end{lemma}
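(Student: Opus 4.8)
The plan is a short double-counting argument, once the degree sequence of $G_i$ is pinned down. First I would note that each matching $W_\ell$ corresponds to a rainbow basis and so saturates $U$; i.e. it contains exactly one edge at each vertex $u_j$. Since $G$ is the disjoint union of $W_1,\dots,W_n$ and $G_i=G-W_1-\dots-W_i$, deleting the first $i$ matchings removes exactly one edge at each $u_j$ per matching, so $d_{G_i}(u_j)=n-i$ for all $j\in[n]$. On the $V$-side, each $W_\ell$ contains at most one edge at a given $v\in V$, so the $d_G(v)$ edges at $v$ are spread one-per-matching among the $n$ matchings; hence the first $i$ of them account for at least $d_G(v)-(n-i)$ of these edges, giving $d_{G_i}(v)\le\min\{d_G(v),\,n-i\}\le n-i$. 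Together with the $U$-side this confirms that $G_i$ has maximum degree exactly $n-i$, so the ``$d$'' appearing in the definition of $\delta_i$ equals $n-i$.

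Next I would split $\delta(G_i)=\delta_i(U)+\delta_i(V)$ along the bipartition. By the first step, $\delta_i(U)=\sum_{j\in[n]}\big((n-i)-d_{G_i}(u_j)\big)=0$. For the $V$-side,
\[\delta_i(V)=\sum_{v\in V}\big((n-i)-d_{G_i}(v)\big)=|V|\,(n-i)-\sum_{v\in V}d_{G_i}(v),\]
and since $G_i$ is bipartite with parts $U,V$, every edge has exactly one endpoint in $V$, whence $\sum_{v\in V}d_{G_i}(v)=|E(G_i)|=\sum_{j\in[n]}d_{G_i}(u_j)=n(n-i)$. Using $|V|=m=n+k$ this gives $\delta_i(V)=(n+k)(n-i)-n(n-i)=k(n-i)$, and therefore $\delta(G_i)=0+k(n-i)=k(n-i)$, as required.

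I do not expect a real obstacle here. The one point that needs a word of care is the assertion that $G_i$ has maximum degree $n-i$ — this is what makes the deficit be measured against the correct value of $d$ — and it is exactly the pigeonhole observation above, relying on $G$ decomposing into precisely $n$ matchings. As a sanity check, for $i=n$ the graph $G_n$ is edgeless, every deficit is $0$, and indeed $k(n-n)=0$.
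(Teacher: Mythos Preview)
Your proof is correct and is essentially the same double-counting argument as the paper's: both compute $\sum_{v\in V}\big((n-i)-d_{G_i}(v)\big)=(n+k)(n-i)-|E(G_i)|=(n+k)(n-i)-n(n-i)=k(n-i)$. The only difference is that you also explicitly treat the $U$-side and observe $\delta_i(U)=0$ (since every $u_j$ has degree exactly $n-i$), whereas the paper simply sums over $V$; the result and the method are the same.
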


\begin{proof} We have
\begin{align*}
\delta(G_i) &= \sum_{v\in V}\delta_i(v) = \sum_{v\in V}(n-i - d_{i}(v)) = (n+k)(n-i) - \sum_{v\in V}d_{i}(v)\\
&= (n+k)(n-i) - |E(G_i)| = (n+k)(n-i) - (n^2 - ni) = k(n-i).
\end{align*}
\end{proof}

\begin{lemma}
Let $V' \subseteq V.$  Then 
$|V_{W_{i+1}} \cap V'| = |V'| -\alpha$ if and only if $\delta_{i+1}(V') = \delta_i(V') -\alpha.$\label{obs1.5}
\end{lemma}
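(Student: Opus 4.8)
The plan is to unwind both sides directly from the definitions and observe they are the same statement rephrased. Recall that $G_{i+1} = G_i - W_{i+1}$, so for every vertex $v \in V$ we have $d_{i+1}(v) = d_i(v) - \mathbf{1}_{V_{W_{i+1}}}(v)$, since $W_{i+1}$ is a matching and hence contributes at most one edge at $v$. The maximum degree drops from $n-i$ in $G_i$ to $n-i-1$ in $G_{i+1}$ (since $E_{W_{i+1}}$ is a basis, $W_{i+1}$ saturates $U$, so every $u_j$ loses exactly one edge). Therefore $\delta_{i+1}(v) = (n-i-1) - d_{i+1}(v) = (n-i) - d_i(v) - \bigl(1 - \mathbf{1}_{V_{W_{i+1}}}(v)\bigr) = \delta_i(v) - \mathbf{1}_{V\setminus V_{W_{i+1}}}(v)$.

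Summing this identity over $v \in V'$ gives
\[
\delta_{i+1}(V') = \delta_i(V') - \bigl|V' \setminus V_{W_{i+1}}\bigr| = \delta_i(V') - \bigl(|V'| - |V_{W_{i+1}} \cap V'|\bigr).
\]
Hence $\delta_{i+1}(V') = \delta_i(V') - \alpha$ exactly when $|V'| - |V_{W_{i+1}} \cap V'| = \alpha$, i.e. when $|V_{W_{i+1}} \cap V'| = |V'| - \alpha$. This establishes the equivalence in both directions simultaneously.

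I anticipate essentially no obstacle here; the only point requiring a word of care is the claim that the maximum degree of $G_{i+1}$ is exactly one less than that of $G_i$ (equivalently, that $W_{i+1}$ removes exactly one edge at every vertex of $U$), which is where the hypothesis that $E_{W_{i+1}}$ is a rainbow basis — and in particular that $W_{i+1}$ saturates $U$ — is used. Once that is noted, the rest is the bookkeeping above, and the same per-vertex identity $\delta_{i+1}(v) = \delta_i(v) - \mathbf{1}_{V \setminus V_{W_{i+1}}}(v)$ is exactly the computation already carried out in the proof of Lemma \ref{obs1.1} applied to individual vertices rather than all of $V$.
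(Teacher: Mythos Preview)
Your proof is correct and follows essentially the same approach as the paper's: both compute $\delta_{i+1}(V')$ directly from the definitions, using that removing the matching $W_{i+1}$ decreases $d_i(v)$ by one precisely at vertices $v\in V_{W_{i+1}}$, and that the maximum degree drops from $n-i$ to $n-i-1$. Your per-vertex indicator formulation and the paper's summed form are the same computation in slightly different notation.
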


\begin{proof} Suppose $|V_{W_{i+1}} \cap V'| = |V'| -\alpha$. Then
\begin{align*}
\delta_{i+1}(V') &= \sum_{v\in V'}((n-i-1) - d_{i+1}(v)) = (n-i-1)|V'| - \sum_{v\in V'}d_{i+1}(v)\\ &= (n-i-1)|V'| - \sum_{v\in V'}d_{i}(v) + (|V'|-\alpha)) =  (n-i)|V'| - \sum_{v\in V'}d_{i}(v) -\alpha\\
&= \delta_i(V') -\alpha.
\end{align*} The above also shows that if $\delta_{i+1}(V') = \delta_i(V') -\alpha,$ then $|V_{W_{i+1}} \cap V'| = |V'| -\alpha$.
\end{proof}

\section{The matroid $N$}

Our goal is to describe the {\it deficit inequalities} for $\delta_i$ which will be a necessary condition for the existence of $n$ disjoint rainbow bases.  Note that $e$ is a loop in $M^*$ if and only if for every base $B$ of $M$, $e\in B.$
We let $N$ be the matroid which is the simplification of the dual matroid $M^*$ plus a loop $a_{loop}.$ that is, $N = \tilde{M^*} + a_{loop}.$  
We let $[a_{loop}]$ denote the set of all loops of $M^*.$
For each element $a \in E(N)- a_{loop}$, let $[a]$ denote the $1$-flat in $M^* - [a_{loop}]$ spanned by $a$.    We define $\psi: V=E(M) \rightarrow E(N)$ where $\psi(e) = a$ if and only if $e\in [a].$ 
For any subset $F \subseteq E(N)$, let $[F] = \bigcup_{a\in F} [a].$  
  %
 We have the following observations which have straightforward proofs:
  
\begin{observation}
Let $E = E(M).$ Then we have
\begin{itemize}
\item[i)] $C$ is a circuit in $M$ if and only if $\psi(C)$ is a cocircuit in $N.$
\item[ii)] A subset $B \subseteq E$ is a basis in $M$ if and only if $\psi(E\backslash B)$ is a basis in $N.$
\item[iii)] For a subset $X \subseteq E,$ $r_N(\psi(X)) = |X| - n + r_M(E-X).$  Equivalently,\\ $r_M(X) = n - |E-X| + r_N(\psi(E-X)).$
\end{itemize}
\label{obs2.54}
\end{observation}

\subsection{Necessary conditions for the existence of $n$ rainbow bases}
 
The existence of $n$ disjoint rainbow bases necessitates that $N$ satisfy certain conditions.  These are given below in Lemma \ref{obs4}.
Suppose $G$ has a decomposition into matchings $W_1, \dots ,W_n$, where for all $i\in [n],$ $V_{W_i}$ is a basis in $M.$  Let $G_p$ be the graph obtained from $G$ by removing $W_1, \dots ,W_p.$   
For each $a \in E(N),$ we define $\delta_p(a):= \delta_p([a]).$  As before, let $E = E(M).$

For the remainder of the paper, we define $\eta := n-p.$  Note that for a matching $W$ in $G_p$, the set $\overline{V}_W$ denotes the vertices in $V$ not saturated by $W.$

\begin{lemma}
For all $e\in E,\ \delta_p(e) \ge 0,$ and for all flats $F$ in $N$ we have
$\delta_p(F) \le r_N(F)\eta.$\label{obs4}
\end{lemma}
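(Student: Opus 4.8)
The plan is to extract both inequalities directly from the basic properties set up in the previous sections, using Lemma~\ref{obs2.55} and Lemma~\ref{obs-2.555} to translate between deficits in $G_p$ and ranks in $N$. First I would handle the nonnegativity $\delta_p(e)\ge 0$: since $G_p$ has maximum degree $\eta=n-p$, we have $d_p(v_e)\le\eta$ for every $e$, so $\delta_p(e)=\eta-d_p(v_e)\ge 0$ by definition of the deficit function. That part is immediate.

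For the flat inequality, the key observation is that $\delta_p(v)$ counts, for a vertex $v\in V$, the number of matchings among $W_{p+1},\dots,W_n$ that do \emph{not} saturate $v$; equivalently, writing $B^{(j)}=E_{W_j}$ for the rainbow basis corresponding to $W_j$, we have $\delta_p(v_e)=\#\{\,j\in\{p+1,\dots,n\}\ :\ e\notin B^{(j)}\,\}$. Summing over $e\in E_F=\psi_E^{-1}(F)$ gives
\begin{align*}
\delta_p(F)=\sum_{j=p+1}^{n}\bigl|\{\,e\in E_F\ :\ e\notin B^{(j)}\,\}\bigr|=\sum_{j=p+1}^{n}\bigl|E_F\cap(E\setminus B^{(j)})\bigr|.
\end{align*}
Now for a fixed $j$, the set $\psi_E(E\setminus B^{(j)})$ is a basis of $N$ by Lemma~\ref{obs2.55}, and $\psi_E$ is injective on $E\setminus B^{(j)}$ (a basis of $N$ has $k$ distinct elements, and $|E\setminus B^{(j)}|=k$), so $\bigl|E_F\cap(E\setminus B^{(j)})\bigr|=\bigl|F\cap\psi_E(E\setminus B^{(j)})\bigr|$. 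Since $\psi_E(E\setminus B^{(j)})$ is a basis of $N$, it meets the flat $F$ in at most $r_N(F)$ elements (a basis restricted to a flat is an independent subset of the flat). Hence each summand is at most $r_N(F)$, and there are $\eta=n-p$ summands, giving $\delta_p(F)\le r_N(F)\eta$ as desired.

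The main obstacle — really the only point needing care — is justifying the combinatorial reinterpretation of $\delta_p(v)$ as a count over the removed matchings and the injectivity of $\psi_E$ on each complement-of-basis; both follow cleanly from the setup, but one must be careful that in $G$ the vertex $v_e$ has degree equal to the number of bases $B_i$ containing $e$, and that in $G_p$ we have removed exactly the edges of $W_1,\dots,W_p$, so $d_p(v_e)=\#\{j>p: e\in B^{(j)}\}$ only after noting that the matchings $W_1,\dots,W_n$ partition $E(G)$ and each $W_j$ contributes at most one edge at $v_e$. Alternatively, one could bypass the counting argument entirely and induct on $p$ using Lemma~\ref{obs1.5}: the inequality holds for $p=0$ since $\delta_0(F)=\sum_{v\in V_F}(n-d_0(v))$ and each $e\in E_F$ lies in $n-|E\setminus B_i|+\cdots$ bases — but this route reintroduces Lemma~\ref{obs-2.555} and is messier, so I would present the direct count. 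Either way the statement is a structural consequence of $N$ being the ``complementary'' matroid, not something requiring the deficit machinery developed later.
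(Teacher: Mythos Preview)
Your proof is correct and is essentially the same argument as the paper's: both hinge on the observation that $\psi_E(E\setminus B^{(j)})$ is a basis of $N$ (Lemma~\ref{obs2.55}) and hence meets any flat $F$ in at most $r_N(F)$ elements, then sum this bound over the $\eta$ matchings $W_{p+1},\dots,W_n$. The only cosmetic difference is that the paper routes the summation through Lemma~\ref{obs1.5} step by step (obtaining $\delta_i(F)\ge\delta_{i-1}(F)-r_N(F)$ and telescoping down to $\delta_n(F)=0$), whereas you unpack $\delta_p(F)$ directly as a double count; these are the same computation.
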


\begin{proof}
As observed before, the maximum degree of $G_p$ equals $\eta$.  Thus $\delta_p(e) \ge 0$ for all elements $e \in E(M).$  For each matching $W_i,\ i = p+1, \dots ,n$, we have $\psi(\overline{V}_{W_i})$ is a basis of $N$ (by Observation \ref{obs2.54} ii)).  Thus for all flats $F$ in $N,$
 $|\overline{V}_{W_i} \cap [F]| \le r_N(F)$ since $\psi(\overline{V}_{W_i} \cap [F]) \subseteq F$ is independent in $N.$  It follows that
$|V_{W_i} \cap [F]| \ge |[F]| -r_N(F).$  Therefore, by Lemma \ref{obs1.5}, $\delta_{i}([F]) \ge \delta_{i-1}([F]) - r_N(F)$ and consequently,
$\delta_{i}(F) \ge \delta_{i-1}(F) -r_N(F),\ i = p+1, \dots ,n.$
Thus $0=\delta_n(F) \ge \delta_p(F) - r_N(F)\eta$ and hence $\delta_p(F) \le r_N(F)\eta.$  
\end{proof}

\section{Finding a rainbow basis}

We shall use the following well-known theorem of Rado (see \cite{Rado}):

\begin{theorem}
Suppose $X_1, \dots , X_{\ell}$ are subsets of elements of a matroid having rank function $r.$  Then there exists an independent transversal $\{ x_1, \dots ,x_\ell \}$ where
$x_i \in X_i, \ i = 1, \dots ,\ell$ if and only if for all subsets $J \subseteq [\ell],\ r\left( \bigcup_{i\in J} X_i \right) \ge |J|.$ \label{the-Rado}
\end{theorem}

In this section, we will assume that only matchings $W_1,\dots ,W_p$ have been constructed, where for $i= 1, \dots ,p$, $V_{W_i}$ is a basis of $M.$   In addition, we shall assume the matchings $W_1, \dots ,W_p$ are constructed such that $\forall e\in E(M),\ \delta_p(e) \ge 0$ and for each flat $F$ in $N$, the following {\bf deficit inequality} holds for for each flat $F$: 

\[
\delta_p(F) \le r_N(F) \eta \label{eqn-defineq} \tag{$\bigstar$}
\]
We shall prove the following key lemma:

\begin{lemma}
If $p\ge 1$, $\delta_p(e) \ge 0,\ \forall e\in E(M)$ and \eqref{eqn-defineq} holds for all flats $F$ in $N$, then there exists a matching $W_{p+1}$ in $G_p$ for which $V_{W_{p+1}}$ is a basis of $M.$
\label{lem-augment}
\end{lemma}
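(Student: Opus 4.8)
The goal is to produce a single matching $W_{p+1}$ in $G_p$ saturating $U$ whose $V$-endpoints form a basis of $M$. Equivalently, via Lemma~\ref{obs2.55}, I want a system of distinct representatives: for each $i \in \{p+1,\dots,n\}$ I must choose one edge $e_j \in B_i$ still present at $u_i$ in $G_p$, all chosen $v_j$ distinct, such that the complement $\psi_E(\overline{V}_{W_{p+1}})$ is a basis of $N$. So this is really a \emph{rainbow-SDR with a matroid-independence constraint}: I am selecting, for each of the $\eta$ remaining bases, a representative element, and the set of \emph{unselected} elements must contain a basis of $N$. The natural tool is a matroid-intersection / Rado-type argument: I would set up a transversal matroid on $E(M)$ coming from the bipartite graph $G_p$ (a set is independent iff it can be matched into $U$ using the colour classes, i.e. is a partial rainbow set), intersect appropriately with $N$-structure, and invoke a Hall/Rado condition. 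The point of the hypotheses $\delta_p(e)\ge 0$ and the deficit inequalities $(\bigstar)$ is precisely to certify that this Hall-type condition holds.

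\textbf{Key steps.} First I would reformulate "$W_{p+1}$ exists and $E_{W_{p+1}}$ is a basis of $M$" into its dual $N$-form: I need a subset $V''\subseteq V$ with $|V''| = k$ (the complement of $V_{W_{p+1}}$ in $V$), such that $\psi_V(V'')$ is a basis of $N$, and such that $G_p - V''$ still admits a perfect matching of $U$ into $V\setminus V''$ using each colour exactly once. Second, by König/Hall applied to the bipartite graph $G_p$ with the $V''$-vertices deleted, such a matching saturating $U$ exists iff for every $U'\subseteq U$, $|N_{G_p}(U')\setminus V''| \ge |U'|$; rewriting the complement, this says $V''$ cannot "swallow" too much of any neighbourhood, and the deficit bookkeeping of Lemmas~\ref{obs1.1}, \ref{obs1.5} translates neighbourhood sizes into deficits. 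Third — and this is where $N$ enters — I would choose $V''$ greedily/optimally as a basis of $N$ that is \emph{as spread out over the fibres $V_a$ and flats $F$ as the deficit inequalities allow}; concretely, for a flat $F$ of $N$, the number of chosen representatives landing in $V_F$ is controlled on one side by $r_N(F)$ (matroid constraint: at most $r_N(F)$ basis elements of $N$ lie in $F$) and on the other by the deficit $\delta_p(F)\le r_N(F)\eta$ (which bounds how "degree-deficient" $V_F$ is, hence how much slack there is). Matching these two bounds is exactly a submodular-flow / matroid-union feasibility check, and I would verify it flat-by-flat using $(\bigstar)$.

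\textbf{Main obstacle.} The hard part is the \emph{simultaneous} satisfaction of the two competing constraints: the unselected set must \emph{contain} a basis of $N$ (an upper bound on how many elements of each flat we may select/keep out), while the selected set must still hit every $u_i$ with distinct vertices of positive residual degree (a lower bound forcing us to select from fibres that are not too depleted). Each constraint alone is a routine Hall/Rado condition; coupling them is a matroid-intersection feasibility statement, and the proof must show the deficit inequalities $(\bigstar)$ are exactly the right certificate. I expect the cleanest route is: assume for contradiction no such $W_{p+1}$ exists; by matroid-intersection duality (or a deficiency version of Rado's theorem) extract a "bad" flat $F$ of $N$ together with a set $U'\subseteq U$ witnessing the obstruction; then compute $\delta_p(F)$ from the witness and derive $\delta_p(F) > r_N(F)\eta$, contradicting $(\bigstar)$. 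The bookkeeping linking the combinatorial witness back to $\delta_p(F)$, via Lemma~\ref{obs1.5} and the fact that each already-removed $W_i$ took at most $r_N(F)$ vertices out of $V_F$, is the step most likely to need care, particularly tracking the role of loops of $N$ (the $\mathbf 0$ fibre $V_{\mathbf 0}$, i.e. elements in no circuit $C_i$) and the boundary cases $r_N(F)=0$ and $F = E(N)$.
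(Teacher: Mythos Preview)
Your approach is correct and is essentially the paper's: set up the transversal matroid $M_1$ on $E(M)$ coming from $G_p$, intersect it with a second matroid, and verify the Edmonds min--max condition using the deficit inequalities. The paper's execution is more direct than your dualized plan: rather than passing to the complement $V''$ and reasoning in $N$, it takes $M_2 = M$ itself, uses K\"onig--Ore to get $r_1(A) \ge |A| - \delta_p(A)/\eta$ and Lemma~\ref{obs-2.555} to get $r_M(E-A) = n - |A| + r_N(\psi_E(A))$, and then $(\bigstar)$ applied to $\mathrm{cl}_N(\psi_E(A))$ gives $\delta_p(A)/\eta \le r_N(\psi_E(A))$, so $r_1(A) + r_M(E-A) \ge n$ immediately --- no greedy selection, no contradiction witness, no separate treatment of the $\mathbf{0}$ fibre is needed.
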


\begin{proof}
For $i =1, \dots ,n,$ let $X_i$ be the set of neighbours of $u_i$ in $V.$  If one can find and independent transversal $\{ x_1, \dots ,x_n \}$ where
$x_i \in X_i, \ i = 1, \dots ,n$, then the corresponding matching $W_{p+1}$ will be such that $V_{W_{p+1}}$ is a basis.  To show such a transversal exists, it suffices to show (by Rado's theorem) that for all $J \subseteq [n],$ $r_M\left( \bigcup_{i\in J} X_i \right) \ge |J|.$  Let $J \subseteq [n]$ and let $X = \bigcup_{i\in J} X_i.$  
By Observation \ref{obs2.54} iii), we have $r_M(X) = n - |E-X| + r_N(\psi(E-X)).$  We also have by (\ref{eqn-defineq}) that 
$$r_N(\psi(E-X)) \ge \frac {\delta_p(E-X)}\eta = \frac {\sum_{v\in E-X} (\eta - d_p(v))}\eta = |E-X| - \frac {\sum_{v\in E-X} d_p(v)}\eta.$$
Thus $r_M(X) \ge n - \frac {\sum_{v\in E-X} d_p(v)}\eta.$  Let $U_J = \{ u_i \in U \ \big| \ i \in J \}$.  Given that each vertex $v \in E-X$ is only adjacent to vertices in $U - U_J,$ it follows that $\sum_{v\in E-X} d_p(v) \le \sum_{u_i \in U_J}d_p(u_i) \le \eta (n-|J|).$
Thus it follows from the above that $r_M(X) \ge n - \frac {\eta (n- |J|)}\eta = |J|.$  This completes the proof. 
\end{proof}

 \section{Changing matchings using alternating paths}
 
 We shall assume $\forall e\in E,$ $\delta_p(e) \ge 0$ and for all flats $F$ in $N$, (\ref{eqn-defineq}) holds.
 By Lemma \ref{lem-augment}, one can find a matching $W= W_{p+1}$ in $G_p$ for which $V_{W}$ is a basis in $M.$  Our strategy is show that, provided $\eta = n-p \ge k^3,$ one can change $W$ using {\it alternating paths} in such a way that when we delete it from $G_p$, the resulting graph $G_{p+1}$ will also be such that $\forall e\in E(M), \ \delta_{p+1}(e) \ge 0$ and (\ref{eqn-defineq}) holds with $\eta -1$ in place of $\eta.$
It will then follow by Lemma \ref{lem-augment} that $G_{p+1}$ has a matching $W_{p+2}$ for which $V_{W_{p+2}}$ is a basis.  The process then continues with $W_{p+2}$ in place of $W_{p+1}.$  

Let $A = \psi(\overline{V}_W)$, which (by Observation \ref{obs2.54} ii)) is a basis in $N.$  It follows by Lemma \ref{obs1.5} that for all flats $F$ in $N,$
$$\delta_{p+1}(F) = \delta_p(F) - \nu \  \Longleftrightarrow \ |\overline{V}_{W} \cap [F]| = \nu.$$  In particular, to ensure that for all flats $F$ in $N$, $\delta_{p+1}(F) \le r_N(F)(\eta-1),$ it suffices to choose $W$ so that $|\overline{V}_{W} \cap [F]| \ge \delta_p(F) - r_N(F)(\eta-1).$  Noting that $|\overline{V}_{W} \cap [F]| = |A \cap F|$, this condition is 
equivalent to ensuring that for all flats $F$, 
\[
|A \cap F| \ge \delta_p(F) - r_N(F)(\eta-1).\label{eqn-Fineq} \tag{$\bigstar\bigstar$} 
\]
 %

Given a set of edges $X$ forming a matching in a bipartite graph, an {\bf alternating path} with respect to $X$ is a path $P$ whose edges alternate between matching edges and non-matching edges.  

Let $W$ be a matching in $G_p$ where $V_{W}$ is a basis.  For a subset $Y \subseteq V_W,$  we let $U_{W,Y}$ denote the set of vertices in $U$ which are reachable by an alternating path with respect to $W$ originating at some vertex in $Y$ where the first edge of the path is in $W.$  We are interested in $e_p(U_{W,Y},\overline{V}_W)$, the number of edges between $U_{W,Y}$ and $\overline{V}_W.$

In Lemmas \ref{lem3} and \ref{obs6} below we show that if a flat $F$ fails to satisfy (\ref{eqn-Fineq}); that is, it's deficit is too large, then there are alternating paths from vertices in $\overline{V}_W$ to vertices in $[F]$. We shall exploit the existence of such  alternating paths to show that one can alter $W$ so that $W$ becomes a matching $W'$ where $A' = \psi(\overline{V}_{W'})$ and $|A' \cap F| > |A \cap F|,$ thereby reducing the number of flats which fail to satisfy (\ref{eqn-Fineq}).

\begin{lemma}
 $e_p(U_{W,Y},\overline{V}_W) \ge \delta_p(Y).$
\label{obs-alt}
\end{lemma}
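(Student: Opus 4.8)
The plan is to count the edges of $G_p$ leaving $U_{W,Y}$ by classifying every vertex of $U$ according to whether it lies in $U_{W,Y}$, and then to use the definition of $U_{W,Y}$ via alternating paths to pin down where the $W$-edges and non-$W$-edges at those vertices go. Recall $W$ saturates all of $U$ and $E_W$ is a basis, so each $u\in U$ has exactly one $W$-edge, landing in $V_W$. Write $U' := U_{W,Y}$. First I would observe that for every $u\in U'$, the unique $W$-edge at $u$ must go to a vertex of $V_W$ that is itself reachable from $Y$ by a $W$-alternating path whose first edge is in $W$: indeed, if $u\in U'$ is reached by such a path $P$, then either $P$ ends with a $W$-edge at $u$ (so the $V_W$-vertex just before $u$ on $P$ is the matched partner of $u$ and is reachable), or $P$ ends with a non-$W$-edge at $u$, in which case we may extend $P$ by the $W$-edge at $u$ to reach that matched partner. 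Call $Y^\ast\subseteq V_W$ the set of $V_W$-vertices reachable from $Y$ by $W$-alternating paths starting with a $W$-edge; then $Y\subseteq Y^\ast$, the $W$-edges incident to $U'$ are exactly the $W$-edges incident to $Y^\ast$, and in particular $|U'| = |Y^\ast|$.

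Next I would argue that \emph{every} non-$W$-edge of $G_p$ incident to $Y^\ast$ has its other endpoint in $U'$: if $v\in Y^\ast$ and $vu\notin W$ is an edge of $G_p$, then appending $vu$ to the $W$-alternating path reaching $v$ gives a $W$-alternating path from $Y$ reaching $u$, so $u\in U'$. Consequently, if we look at all edges of $G_p$ incident to $Y^\ast$, the only ones that can leave $U'$ are $W$-edges — but those land in $U'$ too, since the $W$-partner of any $v\in Y^\ast$ lies in $U'$ by the previous paragraph (it is reached by extending the path one more step, or the path already ends there). So \emph{all} edges of $G_p$ at $Y^\ast$ have their $U$-endpoint in $U'$. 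Now count edges between $U'$ and $V$ in two ways. On one hand $e_{G_p}(U',V) = \sum_{u\in U'} d_p(u)$. Since $G_p = G - W_1 - \cdots - W_p$ and each $W_j$ is a perfect matching on $U$, every $u\in U$ has $d_p(u) = \eta$ exactly; hence $e_{G_p}(U',V) = \eta\,|U'| = \eta\,|Y^\ast|$. On the other hand, split this edge set by where the $V$-endpoint sits: $e_{G_p}(U',Y^\ast) + e_{G_p}(U', V\setminus Y^\ast)$. Because $Y^\ast\subseteq V_W$, we have $V\setminus Y^\ast \supseteq \overline{V}_W$, so $e_{G_p}(U', V\setminus Y^\ast)\ge e_{G_p}(U',\overline V_W) = e_p(U_{W,Y},\overline V_W)$ (every edge of $G_p$ from $U'$ to $\overline V_W$ is counted, and all edges into $\overline V_W$ from $Y^\ast$-side vanish since $\overline V_W\cap Y^\ast=\emptyset$ and the $U$-endpoint of any $\overline V_W$-edge incident to... in fact every edge of $G_p$ meeting $\overline V_W$ has its $U$-end — wait, need all of $\overline V_W$'s $G_p$-edges to hit $U'$; this follows because a non-$W$ edge $v u$ with $v\in\overline V_W$ reachable... not automatic). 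To make the bound clean I would instead directly estimate $e_{G_p}(U',Y^\ast) \le \sum_{v\in Y^\ast} d_p(v)$, and bound $d_p(v)$ for $v\in V_W$ by noting $v$ carries its $W$-edge outside $G_p$ already; more usefully, $\sum_{v\in Y^\ast} d_p(v) \le \sum_{v\in V_W} $ isn't tight either. The efficient route is: $e_p(U_{W,Y},\overline V_W) = e_{G_p}(U',V) - e_{G_p}(U', V_W) \ge \eta|Y^\ast| - \sum_{v\in Y^\ast} d_p(v)$, using that all $G_p$-edges at $Y^\ast$ land in $U'$ so $e_{G_p}(U',V_W)\ge e_{G_p}(U',Y^\ast)=\sum_{v\in Y^\ast}d_p(v)$, and the remaining edges from $U'$ into $V_W\setminus Y^\ast$ only decrease the subtracted term's relevance — actually they're already absorbed. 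Then $\eta|Y^\ast| - \sum_{v\in Y^\ast} d_p(v) = \sum_{v\in Y^\ast}\delta_p(v) = \delta_p(Y^\ast) \ge \delta_p(Y)$ since $Y\subseteq Y^\ast$ and $\delta_p\ge 0$. That gives $e_p(U_{W,Y},\overline V_W)\ge \delta_p(Y)$, as required.

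The main obstacle I anticipate is the bookkeeping in the displayed count: one must be careful that $e_{G_p}(U',V_W)$ is bounded \emph{below} by $\sum_{v\in Y^\ast}d_p(v)$ and not above, which hinges on the two reachability closure claims (every non-$W$-edge at $Y^\ast$ hits $U'$, and every $W$-partner of a $Y^\ast$-vertex lies in $U'$), together with the fact that $Y^\ast\subseteq V_W$. Getting the set $Y^\ast$ and its properties exactly right — particularly that $|U'|=|Y^\ast|$ via the $W$-matching bijection, and that no $G_p$-edge at $Y^\ast$ escapes $U'$ — is the crux; once that is established, the degree identity $d_p(u)=\eta$ for $u\in U$ and the definition of $\delta_p$ finish the argument by a one-line computation.
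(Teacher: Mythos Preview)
Your overall architecture is exactly the paper's: define the set of $V_W$-vertices reachable from $Y$ by $W$-alternating paths (your $Y^\ast$ is precisely the paper's $Y'$, namely the $W$-partners of $U_{W,Y}$), use $|U'|=|Y^\ast|$ and $d_p(u)=\eta$ for $u\in U$, and finish with $\eta|Y^\ast|-d_p(Y^\ast)=\delta_p(Y^\ast)\ge\delta_p(Y)$.

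However, there is a genuine directional gap at the crux. To deduce
\[
e_p(U',\overline V_W)=e_{G_p}(U',V)-e_{G_p}(U',V_W)\ \ge\ \eta|Y^\ast|-\sum_{v\in Y^\ast}d_p(v),
\]
you need an \emph{upper} bound $e_{G_p}(U',V_W)\le \sum_{v\in Y^\ast}d_p(v)$, equivalently $e_{G_p}(U',V_W\setminus Y^\ast)=0$. What you actually prove is the reverse containment: ``every non-$W$-edge at $Y^\ast$ lands in $U'$'' and ``every $W$-partner of a $Y^\ast$-vertex lies in $U'$'' together give $N_{G_p}(Y^\ast)\subseteq U'$, hence $e_{G_p}(U',Y^\ast)=\sum_{v\in Y^\ast}d_p(v)$ and so $e_{G_p}(U',V_W)\ge \sum_{v\in Y^\ast}d_p(v)$ --- the wrong direction for the subtraction. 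Your closing paragraph even names the needed bound as a \emph{lower} bound, which is backwards.

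The fix is a one-line closure claim you never state: if $u\in U'$ and $uv\in E(G_p)$ with $v\in V_W$, then $v\in Y^\ast$. Indeed, any alternating path from $Y$ to $u$ starting with a $W$-edge necessarily arrives at $u$ along its $W$-edge, so appending the (non-$W$) edge $uv$ keeps the path alternating and reaches $v$; if instead $uv\in W$ this is already covered by your first observation. This gives $N_{G_p}(U')\subseteq Y^\ast\cup\overline V_W$, which is exactly the paper's implicit step $d_p(U_{W,Y})=e_p(U_{W,Y},Y'\cup\overline V_W)$, and the computation then goes through. So the approach is right and matches the paper; you just argued the wrong side of the bipartite closure and need to swap which set you extend alternating paths \emph{from}.
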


\begin{proof}
Let $Y'$ be the set of vertices in $V_W$ matched to vertices in $U_{W,Y}.$  Note that $|Y'| = |U_{W,Y}|.$  Since each vertex in $U_{W,Y}$ has degree $\eta,$ we have 
\begin{align*}
\eta|U_{W,Y}| &= d_p(U_{W,Y}) = e_p(U_{W,Y}, Y' \cup \overline{V}_W) = e_p(U_{W,Y},Y') + e_p(U_{W,Y},\overline{V}_W)\\
&\le d_p(Y') + e_p(U_{W,Y},\overline{V}_W)\\
&=|Y'|\eta - \delta_p(Y') + e_p(U_{W,Y},\overline{V}_W)\\
&=  |U_{W,Y}|\eta - \delta_p(Y') + e_p(U_{W,Y},\overline{V}_W)
\end{align*}
From the above, we see that $e_p(U_{W,Y},\overline{V}_W) \ge \delta_p(Y') \ge \delta_p(Y).$
\end{proof}

\begin{definition} For a subset $Y\subseteq V,$ let $\tilde{Y} = \{ v \in Y \cap V_W\ \big| \ \delta_p(v) >0\}.$   An alternating path with respect to $W$ in $G_p$ which originates in $\overline{V}_W$ and terminates in $\tilde{Y}$  is called a $\mathbf{Y}${\bf -path} with respect to $W$.\end{definition}

\begin{lemma}
Let $Y \subseteq V.$  There are at least $\big\lceil \frac { \delta_p(\tilde{Y})}{\eta} \big\rceil$ vertices $v\in \overline{V}_W$ for which there is a $Y$-path originating at $v$ and terminating at some vertex in $\tilde{Y}$.
\label{lem3}
\end{lemma}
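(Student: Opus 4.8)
The statement to prove is Lemma \ref{lem3}: there are at least $\lceil \delta_p(\tilde{Y})/\eta \rceil$ vertices $v \in \overline{V}_W$ from which a $Y$-path originates. The natural approach is to reverse the direction of the alternating paths in Lemma \ref{obs-alt}. Let $R \subseteq \overline{V}_W$ be the set of those $v \in \overline{V}_W$ that are endpoints of some $Y$-path, i.e. $R$ is the set of $v \in \overline{V}_W$ reachable by an alternating path terminating in $\tilde{Y}$ whose last edge is a matching edge (equivalently, starting from $v$ along a non-matching edge). I want to show $|R| \ge \lceil \delta_p(\tilde{Y})/\eta \rceil$, which since $|R|$ is an integer is the same as $|R| \ge \delta_p(\tilde{Y})/\eta$, i.e. $\eta |R| \ge \delta_p(\tilde{Y})$.

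\textbf{Key steps.} First I would set up the ``alternating reachability'' structure rooted at $\tilde{Y}$: let $U^\ast \subseteq U$ be the set of vertices of $U$ reached by alternating paths from $\tilde{Y}$ where the first edge (from $\tilde{Y}$) is a matching edge, and note that every such $U$-vertex is $W$-matched (since $E_W$ is a basis and $W$ saturates $U$, indeed $W$ is a perfect matching on $U$). Let $Z \subseteq V_W$ be the set of $V_W$-partners of vertices in $U^\ast$; then $|Z| = |U^\ast|$ and $\tilde Y \subseteq Z$ — wait, more carefully: $\tilde{Y} = Y \setminus \overline{V}_W \subseteq V_W$, and each $v \in \tilde Y$ is the $W$-partner of some $u \in U$, and that $u$ is reached by the trivial length-one alternating path, so indeed $\tilde Y \subseteq Z$. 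Now, by the standard alternating-path closure argument, every non-matching edge incident to $U^\ast$ goes either to another vertex of $Z$ or to a vertex of $\overline{V}_W$, and in the latter case that vertex of $\overline{V}_W$ lies in $R$ (the non-matching edge extends the alternating path to an unsaturated $V$-vertex). So $N_{G_p}(U^\ast) \cap \overline{V}_W \subseteq R$, and more precisely every $G_p$-edge from $U^\ast$ lands in $Z \cup R$. This gives the degree count: $\eta |U^\ast| = d_p(U^\ast) = e_p(U^\ast, Z) + e_p(U^\ast, R) \le d_p(Z) + e_p(U^\ast, R)$. Now $d_p(Z) = \eta|Z| - \delta_p(Z) = \eta |U^\ast| - \delta_p(Z)$, so $\delta_p(Z) \le e_p(U^\ast, R) \le \eta |R|$ (each vertex of $R \subseteq \overline{V}_W$ has degree at most $\eta$ in $G_p$). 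Finally, since $\tilde Y \subseteq Z$ and $\delta_p$ is nonnegative, $\delta_p(\tilde Y) \le \delta_p(Z) \le \eta |R|$, hence $|R| \ge \delta_p(\tilde Y)/\eta$, and integrality of $|R|$ upgrades this to $|R| \ge \lceil \delta_p(\tilde Y)/\eta\rceil$, as desired.

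\textbf{Main obstacle.} The arithmetic is essentially the same as in Lemma \ref{obs-alt}, so the bookkeeping is not the difficulty. The delicate point is the combinatorial claim that every $G_p$-edge leaving $U^\ast$ stays inside $Z \cup R$ — i.e. that the alternating-reachability set is ``closed'' in the right way — together with the claim that the $\overline V_W$-endpoints reached this way are exactly origins of $Y$-paths (one must check that an alternating path from $\tilde Y$ reaching $U^\ast$, when extended by a non-matching edge to $\overline V_W$, read in reverse starting from that $\overline V_W$-vertex really is an alternating path with respect to $W$ originating in $\overline V_W$ — here the parity works out because a path starting at an unsaturated $V$-vertex must begin with a non-matching edge, and the reversal of our path does exactly that). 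A secondary subtlety is making sure the count $|Z| = |U^\ast|$ is clean: this uses that $W$ restricted to $U^\ast$ is injective into $V_W$, which holds since $W$ is a matching. Once these reachability bookkeeping points are pinned down, the inequality follows by the displayed degree computation exactly as above.
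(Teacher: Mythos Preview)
Your proof is correct and follows essentially the same approach as the paper. Your set $U^\ast$ is exactly the paper's $U_{W,\tilde Y}$ and your $Z$ is the set the paper calls $Y'$; the degree count you carry out is precisely the content of Lemma~\ref{obs-alt}, which the paper cites directly and then divides by $\eta$ (using that vertices of $\overline V_W$ have degree at most $\eta$) to bound the number of neighbours of $U_{W,\tilde Y}$ in $\overline V_W$. In other words, you have inlined Lemma~\ref{obs-alt} rather than invoking it, but the argument is otherwise identical.
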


\begin{proof}
By Lemma \ref{obs-alt}, $e_{p}(U_{W,\tilde{Y}},\overline{V}_W) \ge \delta_p(\tilde{Y}).$  Given that each vertex in $\overline{V}_W$ has degree at most $\eta$,  there are at least 
$\big\lceil \frac {e_{p}(U_{W,\tilde{Y}},\overline{V}_W)}{\eta} \big\rceil \ge \big\lceil \frac {\delta_p(\tilde{Y})}{\eta} \big\rceil$ vertices in $\overline{V}_W$ which are neighbours of vertices in $U_{W,\tilde{Y}}.$   The lemma now follows since if $v\in \overline{V}_W$ is a neighbour of $U_{W,\tilde{Y}'}$, then there is also a $Y$-path originating at $v$ and terminating at some vertex of $\tilde{Y}.$  \end{proof}

\begin{lemma}
Let $Y\subseteq V$ and suppose $Z \subset \overline{V}_W$ is such that $\delta_p(\tilde{Y}) > i\eta + d_p(Z)$ for some nonnegative integer $i.$
Then there are at least $i +1$ vertices $v$ in $\overline{V}_W - Z$ for which some $Y$-path originates at $v$ and terminates at some vertex in $\tilde{Y}$. \label{obs3}
\end{lemma}

\begin{proof}
 It follows by Lemma \ref{obs-alt} that $e_{p}(U_{W,\tilde{Y}},\overline{V}_W) \ge \delta_p(\tilde{Y})  > i\eta + d_p(Z)$.  Thus it follows that there are at least 
 $\left\lceil \frac {e_{p}(U_{W,\tilde{Y}},\overline{V}_W) - d_p(Z)}{\eta} \right\rceil \ge \left\lceil \frac {i\eta+1}{\eta}\right\rceil = i+1$ vertices in $N_p(U_{W,\tilde{Y}}, \overline{V}_W -Z).$ 
This in turn implies that there are at least $i+1$ vertices $v$ in $\overline{V}_W - Z$ for which there is a $Y$-path originating at $v$ and terminating at some vertex $\tilde{Y}.$
\end{proof}
   
 %
 \begin{lemma}
 Let $Y \subseteq V$ and suppose that $|Y \cap \overline{V}_W| =s.$  If $\delta_p(Y) \ge \ell + s\eta,$ then 
 there exist at least $\lceil \frac {\ell}{\eta} \rceil$ vertices $v\in \overline{V}_W - Y$ for which there is a $Y$-path originating at $v.$ \label{obs6}
 \end{lemma}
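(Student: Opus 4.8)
The plan is to obtain both assertions from the additivity of the deficit $\delta_p$ over disjoint vertex sets, followed by a single application of Lemma~\ref{obs3}. Write $Y$ as the disjoint union of $\tilde{Y}=Y\setminus\overline{V}_W$ and $Y\cap\overline{V}_W$. Since $G_p$ has maximum degree $\eta$, every $v\in V$ satisfies $\delta_p(v)=\eta-d_p(v)$, so summing over the $s$ vertices of $Y\cap\overline{V}_W$ gives
$$\delta_p(Y)=\delta_p(\tilde{Y})+\delta_p(Y\cap\overline{V}_W)=\delta_p(\tilde{Y})+s\eta-\sum_{y_i\in Y\cap\overline{V}_W}d_p(y_i).$$
Feeding in the hypothesis $\delta_p(Y)\ge\ell+s\eta$ and rearranging yields the first assertion $\delta_p(\tilde{Y})\ge\ell+\sum_{y_i\in Y\cap\overline{V}_W}d_p(y_i)$. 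This step is a routine computation with no obstacle.

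For the second assertion, set $Y'=Y\cap\overline{V}_W$; since $G$ is bipartite there are no edges inside $V$, so $d_p(Y')=\sum_{y_i\in Y'}d_p(y_i)$ and the first assertion reads $\delta_p(\tilde{Y})\ge\ell+d_p(Y')$. If $\ell\le 0$ then $\lceil\ell/\eta\rceil\le 0$ and there is nothing to prove, so assume $\ell>0$. I would then invoke Lemma~\ref{obs3} with the index $i=\lceil\ell/\eta\rceil-1\ge 0$: because $\lceil\ell/\eta\rceil-1<\ell/\eta$ and $\eta>0$ we have $i\eta<\ell$, hence $\delta_p(\tilde{Y})\ge\ell+d_p(Y')>i\eta+d_p(Y')$, which is precisely the hypothesis of Lemma~\ref{obs3}. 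That lemma then produces at least $i+1=\lceil\ell/\eta\rceil$ vertices $v\in\overline{V}_W-Y'=\overline{V}_W-Y$ for which there is a $Y$-path originating at $v$, as required.

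The only point needing care — and the nearest thing to an obstacle here — is the ceiling arithmetic: the index $i$ handed to Lemma~\ref{obs3} must be large enough that $i+1\ge\lceil\ell/\eta\rceil$ yet small enough that the \emph{strict} inequality $i\eta+d_p(Y')<\delta_p(\tilde{Y})$ survives, and the choice $i=\lceil\ell/\eta\rceil-1$ threads this needle precisely because of the elementary fact $\lceil x\rceil-1<x$. One should also observe that $Y'$ is a proper subset of $\overline{V}_W$ in the non-trivial case: $|\overline{V}_W|=k$ and $\delta_p(V)=k\eta$ by Lemma~\ref{obs1.1}, so the hypothesis forces $\ell+s\eta\le k\eta$, and hence $\ell>0$ gives $s<k$. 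Everything else is just additivity of $\delta_p$ and a direct citation of Lemma~\ref{obs3}.
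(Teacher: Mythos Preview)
Your proof is correct and follows essentially the same route as the paper's: split $\delta_p(Y)$ over $\tilde{Y}$ and $Y\cap\overline{V}_W$, use $\delta_p(y_i)=\eta-d_p(y_i)$ to get the first inequality, then feed $Y'=Y\cap\overline{V}_W$ into Lemma~\ref{obs3}. The paper simply writes ``It now follows by Lemma~\ref{obs3}'' without naming the index $i$; your explicit choice $i=\lceil\ell/\eta\rceil-1$ and the ceiling check $\lceil x\rceil-1<x$ just make that citation precise.
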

 
 \begin{proof}
 Since $\delta_p(Y) \ge \ell + s\eta,$ we have 
 \begin{align*}
 \delta_p(Y) &= \delta_p(\tilde{Y}) + \delta_p(Y\cap \overline{V}_W) \ge \ell + s\eta\\
 \delta_p(\tilde{Y}) &\ge \ell + s\eta - \delta_p(Y \cap \overline{V}_W)\\
 &= \ell + s\eta - \sum_{v_i \in Y\cap \overline{V}_W}(\eta - d_p(v_i))\\
 &= \ell + d_p\left( Y\cap \overline{V}_W\right).
 \end{align*}
 It now follows by Lemma \ref{obs3} that there are at least $\lceil \frac {\ell}{\eta} \rceil$ vertices $v\in \overline{V}_W - Y$ for which there is a $Y$-path originating at $v.$
 \end{proof}
 
 If there is an alternating path $P$ with respect to $W$ which originates a vertex $z_1\in \overline{V}_W$ and terminates at a vertex $z_1' \in V_W$, then we write $z_1 \xrightarrow [W]{P} z_1'.$  We will simply write  $z_1 \xrightarrow [W]{} z_1'$,  if $z_1 \xrightarrow [W]{P} z_1',$ for some alternating path $P.$ 
  The next observation is an easily proven property of alternating paths.
 
 \begin{observation}
 Suppose $z_1 \xrightarrow [W]{P_1} z_1'$ and $z_2 \xrightarrow [W]{P_2} z_2'.$  
 If $V(P_1) \cap V(P_2) \ne \emptyset,$ then $z_1 \xrightarrow [W]{} z_2'$ and $z_2 \xrightarrow [W]{} z_1'.$\label{obs-alternating1}
 \end{observation}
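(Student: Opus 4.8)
The statement to prove is Observation \ref{obs-alternating1}: if $z_1 \xrightarrow[W]{P_1} z_1'$ and $z_2 \xrightarrow[W]{P_2} z_2'$ with $V(P_1) \cap V(P_2) \neq \emptyset$, then $z_1 \xrightarrow[W]{} z_2'$ and $z_2 \xrightarrow[W]{} z_1'$.

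\textbf{Plan.} The approach is to trace along the two alternating paths up to their first point of intersection and then splice them together. First I would fix a common vertex $w \in V(P_1) \cap V(P_2)$; among all such common vertices, choose the one that appears earliest along $P_1$ (measuring from $z_1$). Write $P_1 = z_1 \cdots w \cdots z_1'$ and let $Q_1$ be the initial segment of $P_1$ from $z_1$ to $w$. Similarly, $P_2 = z_2 \cdots w \cdots z_2'$; let $Q_2'$ be the terminal segment of $P_2$ from $w$ to $z_2'$. The candidate path from $z_1$ to $z_2'$ is the concatenation $Q_1 \cdot Q_2'$. The analogous construction (segment of $P_2$ from $z_2$ to $w$, then segment of $P_1$ from $w$ to $z_1'$) yields the candidate path from $z_2$ to $z_1'$.

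The key steps are: (1) verify that the spliced walk is in fact a \emph{path} (no repeated vertices) — this is where the ``first intersection'' choice matters; (2) verify that it is \emph{alternating} with respect to $W$; and (3) verify that it originates in $\overline{V}_W$ and terminates in $V_W$, i.e.\ that the endpoints are of the right type. For (3) this is immediate since $z_1 \in \overline{V}_W$ and $z_2' \in V_W$ by hypothesis. For (1), because $w$ is the first vertex of $P_1$ lying on $P_2$, the segment $Q_1$ meets $P_2$ only at $w$, so $Q_1$ and $Q_2'$ share only the vertex $w$; hence the concatenation has no repeated vertices. For (2), the subtlety is parity at the splice vertex $w$: the edge of $Q_1$ entering $w$ and the edge of $Q_2'$ leaving $w$ must have opposite ``matching/non-matching'' status for the concatenation to alternate. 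This is where I would use the bipartite structure: since $G_p$ is bipartite with parts $U$ and $V$, and $W$ is a matching, at any internal vertex of an alternating path exactly one incident path-edge is in $W$; moreover whether the edge of an alternating path at a given vertex is a matching edge is determined by which side of the bipartition the vertex is on together with the parity of its position. Since $z_1, z_2 \in \overline{V}_W \subseteq V$ both start their paths with a $W$-edge (by definition of the notation $\xrightarrow[W]{}$ and of $U_{W,Y}$-style paths — the first edge is in $W$), the ``matching status'' of the path-edge at any vertex $v \in V(P_1) \cap V(P_2)$ is the same when computed along $P_1$ as along $P_2$, because it depends only on $v$'s bipartition side and on $v$ being reached by a path that started with a matching edge at a $V$-vertex. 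Consequently at $w$ the edge of $Q_1$ arriving at $w$ and the edge of $Q_2'$ departing $w$ have complementary status, and the concatenation alternates.

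\textbf{Main obstacle.} The one point requiring genuine care is step (2): making the parity argument at the splice vertex airtight. The cleanest way is to observe that for any vertex $v$ on an alternating path $P$ that starts at a vertex of $\overline{V}_W$ with a matching edge, the edge of $P$ incident to $v$ on the side \emph{towards the start} is a matching edge if and only if $v \in V$, and a non-matching edge if and only if $v \in U$ (this follows by an immediate induction on the position of $v$, using that $W$ is a matching and the graph is bipartite, so matching and non-matching edges strictly alternate and the two sides alternate along any path). Applying this to $w$ along both $P_1$ and $P_2$ gives exactly the complementarity needed. Once that is in hand, the splicing is routine and symmetric, so the second conclusion $z_2 \xrightarrow[W]{} z_1'$ follows by the identical argument with the roles of the two paths exchanged. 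I would present this as a short paragraph rather than a displayed computation, since there are no nontrivial estimates involved.
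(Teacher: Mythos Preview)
Your overall approach is the standard one and is correct in spirit; the paper itself offers no proof beyond calling the observation ``easily proven,'' so there is nothing to compare against in terms of method. The splice-at-first-intersection argument you outline is exactly how one proves this.

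One slip to fix: you assert that a path from $z_1\in\overline V_W$ begins with a $W$-edge. That is backwards. By definition $\overline V_W$ consists of the vertices of $V$ \emph{not} saturated by $W$, so the first edge out of $z_1$ is necessarily a non-matching edge. (You may be conflating this with the definition of $U_{W,Y}$, where paths start at \emph{saturated} vertices in $V_W$ and hence do begin with a $W$-edge.) Fortunately this does not damage your argument: the point you actually need is that along any such alternating path the matching/non-matching status of the edge at a given vertex is determined solely by which side of the bipartition that vertex lies in, and this holds because both $P_1$ and $P_2$ start at unsaturated $V$-vertices with non-matching edges. With the correct convention, your parity claim reads: for a vertex $v$ on $P_i$ other than the start, the edge of $P_i$ toward the start is in $W$ if and only if $v\in V$. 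This is exactly what is required at the splice vertex $w$, so the concatenation alternates. The rest of your write-up (the first-intersection choice to guarantee a genuine path, and the symmetric argument for $z_2\xrightarrow[W]{}z_1'$) is fine as stated.
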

 
 \section{An example}
 
 To illustrate how one can use Lemmas \ref{lem3} and \ref{obs6} to change the matching $W$ so that the flats in $N$ satisfy (\ref{eqn-Fineq}).   Suppose 
 \begin{itemize}
 \item  $N \simeq F_7^-$ ($F_7$ minus an element)
 \item $E(N) = \{ a,b,c,d,e,f \}$ 
 \item $A = \psi(\overline{V}_W) = \{ a,b,c \}$ and $\eta = 3.$
 \item $\delta_p(b) = \delta_p(c) = \delta_p(e) =1$ and $\delta_p(a) = \delta_p(d) = \delta_p(f) =2.$
 \item $\overline{V}_W = \{ y_1, y_2, y_3 \}$, $a = \psi(y_1), \ b = \psi(y_2),$ and $c = \psi(y_3).$
 \end{itemize}
 See figure \ref{example}. 
 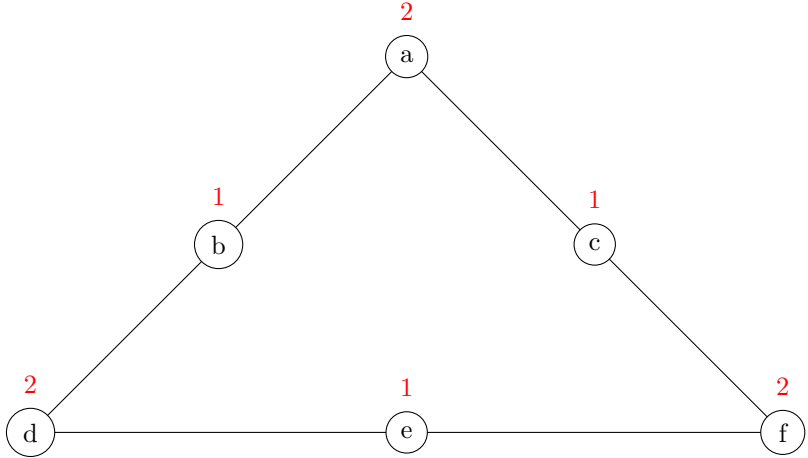
\begin{figure}
\centering
\begin{tikzpicture}[scale=0.50, every node/.style={draw=black, circle,fill=white}]
\draw (10,10) -- (0,0); \draw (0,0) -- (20,0) -- (10,10);\node[label={\textcolor{red}{2}}] (a) at (10,10) {a};
\node[label={\textcolor{red}{1}}] (b) at (5,5) {b};
\node[label={\textcolor{red}{1}}] (c) at (15,5) {c};
\node[label={\textcolor{red}{2}}] (d) at (0,0) {d};
\node[label={\textcolor{red}{1}}] (e) at (10,0) {e};
\node[label={\textcolor{red}{2}}] (f) at (20,0) {f};
\end{tikzpicture}
\caption{Matroid $N$ with deficits for each element}\label{example}
 \end{figure}
We observe that (\ref{eqn-Fineq}) holds for all flats of $N$ except $F_0 = \{ d,e,f \}.$  We describe how one can change $W$ using alternating paths so that (\ref{eqn-Fineq}) holds for $F_0$ as well.    By Lemma \ref{lem3}, for there is a $[d]$-path $P_1$ from some vertex 
 $y_i \in \overline{V}_W$ to a vertex $y \in [d].$  Suppose $i=1;$ that is, $y_1 \xrightarrow [W]{P_1} y.$  Let $W'$ be the matching which is the symmetric difference $W' = W \triangle E(P_1).$  Then $A' = \psi(\overline{V}_{W'}) = A - a + d = \{ b,c,d \}$, which is a basis for $N.$  Furthermore, (\ref{eqn-Fineq}) is seen to hold for all flats in $N$ when $A$ is replace by $A'$.  If $i =2,$ then a similar matching $W'$ can be found.  Thus we may assume $i=3$; that is, $y_3 \xrightarrow [W]{P_1} y.$  In this case, we see that for the matching $W' = W \triangle E(P_1),$  the set $A' = \psi(\overline{V}_{W'}) = A - c + d = \{ a,b,d \}$ is not a basis.  However, we can repeat the same arguments with $f$ in place of $d$.  That is, we may assume that there is a $[f]$-path $P_2$ where $y_1 \xrightarrow [W]{P_1} y'.$  Suppose that $P_i,\ i = 1,2$ are vertex-disjoint.  Let $W'$ be the matching $W' \triangle E(P_1 \cup P_2).$  Then $A' = \psi(\overline{V}_{W'}) = A - b - c + d + f = \{ a, d, f \}$ is a basis for $N$ and furthermore, (\ref{eqn-Fineq}) is seen to hold for all flats in $N$ when $A$ is replaced by $A'$.  Suppose $P_1$ and $P_2$ have common vertices.  Then it follows by Observation \ref{obs-alternating1} that $y_3 \xrightarrow[W]{P} y'$ for some alternating path $P.$  Letting $W'$ be the matching
 $W' = W  \triangle E(P),$ we see that $A' = \psi(\overline{V}_{W'}) = A - b + d = \{ a,c,d \}$ is a basis for $N$ and furthermore, (\ref{eqn-Fineq}) is seen to hold for all flats in $N$ when $A$ is replaced by $A'$
 
  \section{The set of flats $\frakF$}\label{sec-Fset}
  
  Let $W$ be a matching in $G_p$ for which $E_W$ is a basis and let $\overline{V}_{W} = \{ y_1, \dots ,y_k \}$ .  For all $i \in [k],$ let $a_i = \psi(y_i)$ and let $A = \{ a_1, \dots , a_k \},$ which is a basis in $N$.   We shall assume that for all flats $F$ in $N$, $\delta_p(F) \le  r_N(F) \eta.$  Our primary goal is to show that $W$ can be chosen so that for all flats $F$ in $N$, (\ref{eqn-Fineq}) holds.
When $\delta_p(F) \le r_N(F)(\eta -1),$ this inequality automatically holds and thus the focus will be on the flats $F$ where
$\delta_p(F) > r_N(F)(\eta -1).$ 

\begin{definition}Let $\F$ be the set all flats $F$ in $N$ for which $\delta_p(F) > r_N(F)(\eta-1)$ and let $\frakF$ denote the set of all flats in $N$ which can be expressed as a finite intersection of flats in $\F.$ 
\end{definition}
We note that $N\in \F$ and $\F \subseteq \frakF$.   Furthermore, $\frakF$ is closed under intersections. 

Suppose $F\in \F$ and let $\delta_p(F) = r_N(F)\eta - i$ (where $0 \le i < r_N(F)$).  If $|A \cap F| < r_N(F)-i,$ then inequality (\ref{eqn-Fineq}) fails to hold for $F.$  As such we wish to alter $W$ so that for the corresponding 
set $A$, $|A \cap F| \ge r_N(F)-i.$  The process of changing $W$ is complicated by the fact that in doing so, the inequality (\ref{eqn-Fineq}) may not be preserved for other flats in $\F$ for which it holds.  To get around this problem, we will simply alter $W$ by successive iterations so as to obtain a matching $W$ such that for all flats $F\in \frakF,$ $|A \cap F| = r_N(F).$  We will show that it is possible to choose $W$ as such provided $\eta \ge k^3.$ 

\begin{definition}
For all $i \in [k]$ let $H_i$ be the hyperplane in $N$, where $H_i = \mathrm{cl}_N (A - a_i)$, and let $C_i^* = \overline{H}_i,$ which is a cocircuit in $N$ containing $a_i.$  
\end{definition}

\begin{observation}
For $\{ i_1, i_2, \dots ,i_s\} \subseteq [k]$,  $\delta_p \left(\bigcup_{j=1}^s C_{i_j}^* \right) \ge s\eta.$\label{obs-deltabound}
\end{observation}

\begin{proof}
We have $N_{i_1 \cdots i_s} = H_{i_1} \cap H_{i_2} \cap \cdots \cap H_{i_s} = \mathrm{cl}_N (A - \{ a_{i_1}, \dots , a_{i_s} \} )$ is a rank-$(k-s)$ flat in $N.$ By assumption, $\delta_p(N_{i_1 \cdots i_s}) \le (k-s)\eta.$  Thus $\delta_p \left(\bigcup_{j=1}^s C_{i_j}^* \right) =  \delta_p(\overline{N_{i_1 \cdots i_s}}) = \delta_p(N) - \delta_p(N_{i_1 \cdots i_s}) \ge s\eta.$ 
\end{proof}
  
The next definition exploits the useful property that $\frakF$ is closed under taking intersections. 
 
 \begin{definition} For all $i \in [k]$, there is a unique flat in $\frakF$ of least rank which contains $a_i.$  We shall denote such a flat by $F_i.$  Furthermore, we let $F_i' = C_i^* \cap F_i.$   
 \end{definition}

\begin{definition} For a subset $F \subseteq E(N)$, we define $\omega_{W}(F) := |A \cap F|.$\end{definition}   
Our ultimate goal is to show that, provided $\eta \ge k^3,$  one can alter the matching $W$, obtaining a matching $W'$ such that $V_{W'}$ is a basis and for all flats $F\in \frakF$ in $N,$ $\omega_{W'}(F) = r_N(F).$  This in turn will imply that (\ref{eqn-Fineq}) holds for all flats $F$ in $N.$
 
\begin{definition} For all flats $F$ in $N$, we define $F_A := \mathrm{cl}_N(F\cap A).$\end{definition}  
 We note that $r_N(F_A) = \omega_W(F).$
 \begin{lemma}
Suppose that $Q_i,\ i \in [s],$ are flats in $N$ where for all $i\in [s],$ $\delta_p(Q_i) = r_N(Q_i)\eta - j_i.$
Then $\delta_p\left(\bigcap_{i=1}^s Q_{i} \right) \ge r_N\left( \bigcap_{i=1}^s Q_{i}\right) \eta - j_1 - j_2 - \cdots - j_s.$ 
\label{lem-B1}
\end{lemma}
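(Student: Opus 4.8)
The plan is to prove the inequality by induction on $s$, reducing everything to the case $s=2$, where the key tool is submodularity of the deficit function $\delta_p$ — which in turn follows from submodularity of the rank function $r_N$ of the matroid $N$ together with the fact that $\delta_p$ is a sum of pointwise deficits.

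For the base case $s = 2$: write $Q_1, Q_2$ with $\delta_p(Q_i) = r_N(Q_i)\eta - j_i$. I would like to say $\delta_p(Q_1 \cap Q_2) + \delta_p(Q_1 \cup Q_2) \ge \delta_p(Q_1) + \delta_p(Q_2)$, but $Q_1 \cup Q_2$ need not be a flat, so I would instead work with $\operatorname{cl}_N(Q_1 \cup Q_2)$, using that $\delta_p$ is monotone and that $\delta_p(V_a) \ge 0$ for every $a \in E(N)$ (so $\delta_p(F) \le \delta_p(F')$ when $F \subseteq F'$). The point is that the underlying set functions $F \mapsto \delta_p(V_F)$ split as $\delta_p(V_{F_1}) + \delta_p(V_{F_2}) = \delta_p(V_{F_1 \cap F_2}) + \delta_p(V_{F_1 \cup F_2})$ at the level of the \emph{partition} of $E(N)$ into fibres $V_a$ — because $\mathbf{1}_{F_1} + \mathbf{1}_{F_2} = \mathbf{1}_{F_1 \cap F_2} + \mathbf{1}_{F_1 \cup F_2}$ pointwise on $E(N)$. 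So $\delta_p(Q_1 \cap Q_2) = \delta_p(Q_1) + \delta_p(Q_2) - \delta_p(Q_1 \cup Q_2) \ge \delta_p(Q_1) + \delta_p(Q_2) - \delta_p(\operatorname{cl}_N(Q_1 \cup Q_2))$. Now apply the deficit inequality $(\bigstar)$ to the flat $\operatorname{cl}_N(Q_1 \cup Q_2)$: $\delta_p(\operatorname{cl}_N(Q_1 \cup Q_2)) \le r_N(Q_1 \cup Q_2)\eta$. Combining and using submodularity of $r_N$, namely $r_N(Q_1 \cup Q_2) \le r_N(Q_1) + r_N(Q_2) - r_N(Q_1 \cap Q_2)$, I get
$$\delta_p(Q_1 \cap Q_2) \ge r_N(Q_1)\eta - j_1 + r_N(Q_2)\eta - j_2 - r_N(Q_1 \cup Q_2)\eta \ge r_N(Q_1 \cap Q_2)\eta - j_1 - j_2,$$
as required.

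For the inductive step, set $P = \bigcap_{i=1}^{s-1} Q_i$. By the induction hypothesis $\delta_p(P) \ge r_N(P)\eta - (j_1 + \cdots + j_{s-1})$, i.e. $\delta_p(P) = r_N(P)\eta - j'$ with $j' \le j_1 + \cdots + j_{s-1}$; since $P$ is an intersection of flats it is itself a flat, so I may apply the $s=2$ case to the pair $P$ and $Q_s$ to conclude $\delta_p(P \cap Q_s) \ge r_N(P \cap Q_s)\eta - j' - j_s \ge r_N(\bigcap_{i=1}^s Q_i)\eta - (j_1 + \cdots + j_s)$. (One small bookkeeping point: the $s=2$ argument as written assumes equality $\delta_p(Q_i) = r_N(Q_i)\eta - j_i$, but it works verbatim with $\le j_i$ in place of equality, which is all the induction delivers; I would phrase the $s=2$ lemma in that slightly more flexible form from the start, or simply note that $\delta_p(P) \le r_N(P)\eta$ by $(\bigstar)$ forces the deficit of $P$ to be exactly $r_N(P)\eta$ minus some nonnegative $j' \le \sum_{i<s} j_i$.)

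The main obstacle is the bookkeeping around $Q_1 \cup Q_2$ not being a flat: the clean submodularity statement lives at the level of characteristic functions on $E(N)$, and one has to be careful that passing from $Q_1 \cup Q_2$ to its closure only \emph{increases} the deficit (which is exactly where nonnegativity of the pointwise deficits $\delta_p(V_a)$ is used) while it can only increase the rank in a controlled way — but $r_N(\operatorname{cl}_N(Q_1 \cup Q_2)) = r_N(Q_1 \cup Q_2)$ since closure preserves rank, so this costs nothing. Everything else is routine: monotonicity of $\delta_p$, the pointwise identity $\mathbf{1}_{F_1} + \mathbf{1}_{F_2} = \mathbf{1}_{F_1 \cap F_2} + \mathbf{1}_{F_1 \cup F_2}$, submodularity of $r_N$, and the hypothesis $(\bigstar)$ applied once to a single flat.
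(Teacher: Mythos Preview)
Your proof is correct and follows essentially the same approach as the paper: establish the case $s=2$ via the modular identity $\delta_p(Q_1)+\delta_p(Q_2)=\delta_p(Q_1\cap Q_2)+\delta_p(Q_1\cup Q_2)$, bound $\delta_p(Q_1\cup Q_2)$ by $r_N(Q_1\cup Q_2)\eta$, apply submodularity of $r_N$, and then induct. You are in fact more explicit than the paper on two points the paper glosses over: that $Q_1\cup Q_2$ need not be a flat (you pass to the closure, using monotonicity of $\delta_p$ and rank-invariance under closure), and that the induction hypothesis yields only an inequality rather than the equality appearing in the lemma's hypothesis (you handle this by writing $\delta_p(P)=r_N(P)\eta-j'$ with $0\le j'\le\sum_{i<s}j_i$).
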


\begin{proof}
Using the fact that $r_N$ is a submodular function, we have
\begin{align*}
\delta_p(Q_1 \cap Q_2) &= \delta_p(Q_1) + \delta_p(Q_2) - \delta_p(Q_1 \cup Q_2)\\
&= r_N(Q_1)\eta - j_1 + r_N(Q_2)\eta - j_2 -  \delta_p(Q_1 \cup Q_2)\\
&\ge (r_N(Q_1) + r_N(Q_2))\eta - j_1 -j_2 - r_N(Q_1 \cup Q_2)\eta\\
&= (r_N(Q_1) + r_N(Q_2) - r_N(Q_1 \cup Q_2))\eta - j_1 - j_2\\
&\ge r_N(Q_1 \cap Q_2)\eta - j_1 - j_2.
\end{align*} 
It is now easily proven by induction that $\delta_p(Q_1 \cap Q_2 \cap \cdots \cap Q_s) \ge r_N(Q_1 \cap Q_2 \cap \cdots \cap Q_s)\eta - j_1 -j_2 - \cdots - j_s.$
\end{proof} 

\begin{lemma}
For all $F\in \frakF,$ $\delta_p(F) \ge r_N(F)\eta -(k-1)^2$.\label{obs-lowerbound}
\end{lemma}

\begin{proof}
By the definition of $\F$, we have for all $F\in \F$, $\delta_p(F) > r_N(F)(\eta -1) \ge r_N(F)\eta -k$.
Let $F \in \frakF.$ Then $F = F_1 \cap F_2 \cap \cdots \cap F_\ell$ for some flats $F_i \in \F,\ i =1,2, \dots ,\ell.$  Assuming that $\ell$ is a minimum number of such flats, it follows that for $i = 1, \dots ,\ell-1$,
$r_N \left( \bigcap_{j\le i} F_j \right) > r_N \left( \bigcap_{j\le i+1} F_j \right).$  Given that $r_N(F) \ge 1,$ it follows that $\ell \le k-1.$  It now follows by Lemma \ref{lem-B1} that $\delta_p(F) \ge r_N(F)\eta - (k-1)^2.$
\end{proof}

\begin{lemma}
For all $i \in [k]$, $\delta_p(F_i') \ge \eta - (k-1)^2.$
\label{lem-B2} 
\end{lemma}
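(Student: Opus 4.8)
The goal is to bound $\delta_p(F_i'') = \delta_p(C_i^* \cap F_i)$ from below by $\eta - c_0$. The natural approach is to exploit the fact that $F_i''$ is obtained by removing the hyperplane slice $F_i' = H_i \cap F_i$ from the flat $F_i$, and that $F_i$ itself is a member of $\frakF$, so Lemma \ref{obs-lowerbound} applies to it. First I would write $\delta_p(F_i) = \delta_p(F_i') + \delta_p(F_i'')$, since $F_i'$ and $F_i''$ partition $F_i$ as sets of elements of $N$ (here $F_i' = H_i \cap F_i$ and $F_i'' = F_i \setminus F_i'$). By Lemma \ref{obs-lowerbound}, $\delta_p(F_i) \ge r_N(F_i)\eta - c_0$. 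So it suffices to produce an \emph{upper} bound on $\delta_p(F_i')$ of the form $\delta_p(F_i') \le (r_N(F_i) - 1)\eta$, since then $\delta_p(F_i'') = \delta_p(F_i) - \delta_p(F_i') \ge r_N(F_i)\eta - c_0 - (r_N(F_i)-1)\eta = \eta - c_0$.

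The upper bound on $\delta_p(F_i')$ should come from the standing deficit inequality \eqref{eqn-defineq}: for \emph{every} flat $F$ in $N$, $\delta_p(F) \le r_N(F)\eta$. Applied to the flat $F_i'$ this gives $\delta_p(F_i') \le r_N(F_i')\eta$. So the whole argument reduces to the rank computation $r_N(F_i') \le r_N(F_i) - 1$, i.e. $r_N(H_i \cap F_i) < r_N(F_i)$. This is where I would need to check that $F_i$ is not contained in $H_i$. Recall that $F_i$ is, by definition, the unique flat of least rank in $\frakF$ containing $a_i$; and $a_i \notin H_i$, since $H_i = \cl_N(A - a_i)$ and $A$ is a basis of $N$, so $a_i$ is not spanned by $A - a_i$. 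Hence $a_i \in F_i \setminus H_i$, so $F_i \not\subseteq H_i$, and therefore $F_i \cap H_i$ is a proper flat of $F_i$, giving $r_N(F_i \cap H_i) \le r_N(F_i) - 1$. (One should note $F_i' = F_i \cap H_i$ is indeed a flat, being an intersection of two flats, so \eqref{eqn-defineq} legitimately applies to it.)

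Assembling the pieces: $\delta_p(F_i'') = \delta_p(F_i) - \delta_p(F_i') \ge \bigl(r_N(F_i)\eta - c_0\bigr) - r_N(F_i')\eta \ge r_N(F_i)\eta - c_0 - (r_N(F_i) - 1)\eta = \eta - c_0$, as desired. I do not anticipate a serious obstacle here — the only point requiring care is confirming $a_i \notin H_i$ so that the rank strictly drops when intersecting with $H_i$; this is immediate from $A$ being a basis of $N$ and $H_i = \cl_N(A - a_i)$. A secondary point worth a sentence is that Lemma \ref{obs-lowerbound} genuinely applies to $F_i$, which holds because $F_i \in \frakF$ by its very definition. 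Everything else is the two-line arithmetic above.
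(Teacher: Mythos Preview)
Your proof is correct and follows exactly the same route as the paper: write $\delta_p(F_i'') = \delta_p(F_i) - \delta_p(F_i')$, apply Lemma~\ref{obs-lowerbound} to $F_i\in\frakF$ for the lower bound on $\delta_p(F_i)$, apply the deficit inequality \eqref{eqn-defineq} to the flat $F_i'$ for the upper bound on $\delta_p(F_i')$, and use $r_N(F_i') \le r_N(F_i)-1$. If anything, you are more careful than the paper in justifying that last rank drop (via $a_i \in F_i \setminus H_i$) and in noting that $F_i'$ is a flat so that \eqref{eqn-defineq} applies.
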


\begin{proof}
We have that 
\begin{align*}
\delta_p(F_i') &= \delta_p(F_i) - \delta_p(F_i \cap H_i) \ge r_N(F_i)\eta - (k-1)^2 - r_N(F_i\cap H_i)\eta\\
&\ge  r_N(F_i)\eta - (k-1)^2 - (r_N(F_i)-1)\eta\\
&= \eta - (k-1)^2
\end{align*}
\end{proof}
  
\begin{lemma}
Let $A' \subseteq A$ where for all $a_i, a_j \in A'$, $F_i = F_j.$  Then $\delta_p\left(\bigcup_{a_i \in A'}F_i'\right) \ge |A'|\eta - (k-1)^2.$
\label{lem-B3}
\end{lemma}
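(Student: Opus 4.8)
The plan is to reduce the statement to a single flat by first understanding the common flat $F$ that all the $F_i$ with $a_i\in A'$ are equal to. Write $F = F_i$ for any (hence all) $a_i\in A'$. The quantity $\bigcup_{a_i\in A'}F_i''$ is then $\bigcup_{a_i\in A'}(C_i^*\cap F)$, and since each $F_i'' = C_i^*\cap F$ sits inside $F$, the union is contained in $F$. So I would analyze $F\setminus\bigcup_{a_i\in A'}F_i''$, i.e.\ the part of $F$ not covered by any of these cocircuit-slices, and show its deficit is small — at most $c_0$ — while $\delta_p(F)\ge r_N(F)\eta - c_0$ by Lemma \ref{obs-lowerbound}. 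If that complementary piece had deficit significantly less than $r_N(F)\eta - |A'|\eta + c_0$, we would be done, since $\delta_p$ is additive over disjoint vertex sets.

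First I would identify $F\setminus\bigcup_{a_i\in A'}F_i'' = F\cap\bigcap_{a_i\in A'}H_i$, because $F_i'' = C_i^*\cap F = F\setminus H_i$. Now $\bigcap_{a_i\in A'}H_i = \mathrm{cl}_N(A\setminus\{a_i : a_i\in A'\})$ is a flat of rank $k - |A'|$, so its intersection with $F$ is a flat $F^\circ$ of rank at most $\min\{r_N(F),\ k-|A'|\}$. The key point is that $F^\circ$ does not contain any $a_i\in A'$ (each such $a_i$ lies in $C_i^*$, off $H_i$), whereas $F$ contains $r_N(F_A)=\omega_W(F)$ independent elements of $A$; since $F = F_i$ is the least-rank flat of $\frakF$ containing $a_i$, and $F_i$ contains $a_i$, one expects $\omega_W(F) \ge$ (the number of $a_j\in A$ in $F$), and removing the $|A'|$ elements $a_i\in A'$ from the picture drops the available rank accordingly — so heuristically $r_N(F^\circ)\le r_N(F) - |A'|$. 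I would make this rank drop precise using that $A\cap F$ is independent and that each $a_i\in A'$ contributes to $\mathrm{cl}_N(A\cap F)$ but lies outside $H_i$, hence outside $\bigcap_{a_j\in A'}H_j$.

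Granting $r_N(F^\circ)\le r_N(F)-|A'|$, I would then bound $\delta_p(F^\circ)$. If $F^\circ\in\frakF$ this is immediate — but there is no reason it lies in $\frakF$, so instead I would use the trivial upper bound $\delta_p(F^\circ)\le r_N(F^\circ)\eta$, which holds for every flat by the hypothesis (\ref{eqn-defineq}) maintained throughout this section, actually $\delta_p(F^\circ)\le r_N(F^\circ)\eta$ from Lemma \ref{obs4}. Combining:
\begin{align*}
\delta_p\!\left(\bigcup_{a_i\in A'}F_i''\right) &= \delta_p(F) - \delta_p(F^\circ) \ \ge\ \bigl(r_N(F)\eta - c_0\bigr) - r_N(F^\circ)\eta\\
&\ge r_N(F)\eta - c_0 - \bigl(r_N(F) - |A'|\bigr)\eta \ =\ |A'|\eta - c_0,
\end{align*}
using Lemma \ref{obs-lowerbound} for the first bracket (as $F\in\frakF$) and the rank-drop inequality for the second.

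The main obstacle I anticipate is the rank bound $r_N(F^\circ)\le r_N(F) - |A'|$. One has to be careful that $F^\circ = F\cap\bigcap_{a_i\in A'}H_i$ genuinely loses one dimension for each element removed, which rests on $A\cap F$ being an independent set of size $\omega_W(F)$ containing all the $a_i\in A'$ and on the $H_i$ being the hyperplanes spanned by $A\setminus\{a_i\}$; submodularity of $r_N$ applied to $F$ and $\bigcap_{a_i\in A'}H_i$, together with $r_N(F\cup\bigcap H_i)\ge r_N(F)$ and $r_N(\bigcap_{a_i\in A'}H_i) = k-|A'|$, should give $r_N(F^\circ)\le r_N(F) + (k-|A'|) - r_N(F\cup\bigcap H_i)$; the remaining work is to show $r_N(F\cup\bigcap_{a_i\in A'}H_i) = k$, i.e.\ that $F$ together with the rank-$(k-|A'|)$ flat $\bigcap H_i$ spans $N$, which follows because $F$ contains the $|A'|$ elements $a_i\in A'$ that are independent from $A\setminus\{a_i:a_i\in A'\}$. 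Once that span computation is nailed down, the rest is the short display above.
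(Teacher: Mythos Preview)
Your proposal is correct and follows essentially the same route as the paper: setting $F^\circ = F\cap\bigcap_{a_i\in A'}H_i$ (the paper calls this $F'$), writing $\delta_p\bigl(\bigcup F_i''\bigr) = \delta_p(F) - \delta_p(F^\circ)$, and then combining the lower bound $\delta_p(F)\ge r_N(F)\eta - c_0$ from Lemma~\ref{obs-lowerbound} with the upper bound $\delta_p(F^\circ)\le r_N(F^\circ)\eta$ and the rank drop $r_N(F^\circ)\le r_N(F)-|A'|$. The paper simply asserts the rank drop without comment, whereas you supply the (correct) submodularity argument via $A'\subseteq F$ and $A\setminus A'\subseteq\bigcap_{a_i\in A'}H_i$, so $F\cup\bigcap_{a_i\in A'}H_i$ spans $N$.
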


\begin{proof}
Let $F \in \frakF$ be the flat for which $F = F_i$, for all $a_i \in A'.$  Let $F'' = F \cap \bigcap_{a_i \in A'}H_i.$  Then we have
\begin{align*}
\delta_p\left(\bigcup_{a_i \in A'}F_i'\right) &= \delta_p(F) - \delta_p(F'')\\
&\ge r_N(F)\eta - (k-1)^2 - r_N(F'')\eta\\
&\ge r_N(F)\eta - (k-1)^2 - (r_N(F) - |A'|)\eta\\
&= |A'|\eta - (k-1)^2.
\end{align*}
\end{proof}

\begin{lemma}
 Let $A' \subseteq A,$ where for all $a_i \in A',$ $\omega_W(F_i) = r_N(F_i).$  Then $\delta_p\left(\bigcup_{a_i \in A'}F_i'\right) \ge |A'|\eta - k(k-1)^2.$
\label{lem-B4}
\end{lemma}

\begin{proof}
We first observe that for all $a_i, a_j \in A',$ if $F_i' \cap F_j' \ne \emptyset,$ then $a_i \in F_j,$ $a_j\in F_i$ and hence $F_i = F_j.$  Defining an equivalence relation on $A'$ where $a_i \sim a_j$ if and only if $F_i = F_j$,  let $A_1', \dots ,A_s'$
be the equivalence classes.  Then we have by Lemma \ref{lem-B2},
\begin{align*}
\delta_p\left(\bigcup_{a_i \in A'}F_i'\right) &= \sum_{i=1}^s \delta_p\left(\bigcup_{a_j \in A_j'}F_i'\right)\\
&\ge \sum_{i=1}^s(|A_i'|\eta - (k-1)^2)\\
&= |A'|\eta - s(k-1)^2 \ge |A'|\eta - k(k-1)^2.
\end{align*}
\end{proof}

\section{Path-tangles and path-chains}

To alter $W$ so that 
for all $F\in \frakF,$ $\omega_{W}(F) = r_N(F)$,  we will do so incrementally using sequences of alternating paths in $G_p$ which we refer to as {\it path-chains}.  

Recall that $\overline{V}_W = \{ y_1, \dots ,y_k \}$, $\psi(\overline{V}_W) = A = \{ a_1, \dots ,a_k \}$ (a basis for $N$) and for $i\in [k],$ $C_i^* = \overline{H}_i$, $F_i' = C_i^* \cap F_i$. 
Let $A_0 \subseteq E(N)-A$ and $Y_0 = [A_0].$

Suppose $P_1, \dots , P_t$ are vertex-disjoint alternating paths with respect to $W$ where for $i =1, \dots ,t$, $y_i \xrightarrow[W]{P_i} y_i'$ (and $y_i' \in V_W$).  Let $a_i' = \psi(y_i'), \ i = 1, \dots ,t.$  We have that $W' = W \triangle E(P_1 \cup \cdots \cup P_t)$ is a matching (where $|W'| = |W|$)
and $\overline{V}_{W'} = \overline{V}_W - \{ y_1, \dots ,y_t \} + \{ y_1', \dots ,y_t' \}$.  Suppose $\overline{V}_{W'}$ is a basis.  Then it follows that $A' = A - \{ a_1, \dots ,a_t \}  + \{ a_1', \dots ,a_t' \}$ is a basis of 
$N$.  Our goal is to show that such paths $P_1, \dots ,P_t$ can be found so that $\omega_{W'}(F) > \omega_W(F)$ (i.e. $|F \cap A'| > |F \cap A|$) for certain flats $F\in \frakF.$   Ultimately, this procedure  will result in (\ref{eqn-Fineq}) being satisfied for all flats $F\in \frakF.$  To obtain $A'$ from $A$, for $i = 1, \dots ,t,$ we replace $a_i$ with $a_i'$.  Generally, to ensure that this procedure yields a basis $A'$, we need to choose the path $P_1, \dots ,P_t$ strategically.   
In particular, for $i =1, \dots ,t,$ we will choose $P_i$ such that $\psi(y_i') = a_i' \in C_i^*.$

\begin{definition}
For a positive integer $t,$ let $\beta: [t] \rightarrow [k]$ be a function and suppose $P_1, \dots ,P_t$ are alternating paths with respect to $W$ originating in $\overline{V}_W$ and terminating in $V_W.$ The $t$-tuple $K = (P_1, \dots ,P_t)$ is called a {\bf path-tangle} rooted at $Y_0$ if:
\begin{itemize}
\item[i)]  For all $i\in [t],$ $y_{\beta(i)} \xrightarrow [W]{P_i} y_{\beta(i)}'$.
\item[ii)] $y_{\beta(1)}' \in Y_0$.
\item[iii)] For $i=2, \dots ,t,$  either $y_{\beta(i)}' \in Y_0$ or $y_{\beta(i)}' \in [C_{\beta(i')}^*]$, for some $i' < i.$
\end{itemize}
\end{definition}

\begin{definition} $K=(P_1, P_2, \dots ,P_t)$ is called a {\bf  path-chain} if instead of iii) $K$ satisfies the stronger property:
\begin{itemize}
\item[iv)] For $i=2, \dots ,t,$  $y_{\beta(i)}' \in [C_{\beta(i-1)}^*]$.
\end{itemize}
\end{definition}

Let $K = (P_1, \dots ,P_t)$ be the path-tangle as described above.  Consider $y_{\beta(i)}.$  Within $K$ one can find a path-chain $K' = (P_{\gamma(1)}, \dots ,P_{\gamma(s)})$ where $\gamma(s) = i$ and 
$y_{\gamma(1)}' \in Y_0.$
One can describe such a path-chain informally as follows:  The path $P_{\beta(i)}$ connects $y_{\beta(i)}$ with $y_{\beta(i)}'$.  If $y_{\beta(i)}' \in Y_0,$ then
$K' = P_{\beta(i)}$ is the desired path-chain.  Suppose $y_{\beta(i)}' \not\in Y_0.$  Then $y_{\beta(i)}' \in [C_{\beta(i')}^*]$, for some $i' < i.$  The path $P_{\beta(i')}$ connects $y_{\beta(i')}$ with $y_{\beta(i')}'.$  If $y_{\beta(i')}' \in Y_0$, then $K' = (P_{\beta(i')}, P_{\beta(i)})$ is the desired path-chain.  Suppose $y_{\beta(i')}' \not\in Y_0$.  Then $y_{\beta(i')}'  \in [C_{\beta(i'')}^*]$, for some $i'' < i'.$  The path $P_{\beta(i'')}$ connects $y_{\beta(i'')}$ with $y_{\beta(i'')}'$.  If $y_{\beta(i'')}' \in Y_0$, then $K' = (P_{\beta(i'')}, P_{\beta(i)'}, P_{\beta(i)})$ is the desired chain.  If $y_{\beta(i'')}' \not\in Y_0,$ then the process continues.  Seeing as these paths must eventually terminate in $Y_0,$ we will eventually attain the desired path-chain $K'$.
We refer to $K'$ as a {\bf path-chain in} $\mathbf{K}$ {\bf from} $\mathbf{y_{\beta(i)}}$ {\bf to} $\mathbf{Y_0}$ . 

\subsection{The composition of path-chains}

For positive integers $t_i,$ $i=1,2,$ let $\beta_i: [t_i] \rightarrow [k]$ be functions and for $i=1,2,$ let $K_i = (P_{i1}, \dots ,P_{it_i})$ be a path-chain rooted at $Y_0,$ where for $j = 1, \dots ,t_i$, $y_{\beta_i(j)} \xrightarrow [W] {P_{ij}} y_{\beta_i(j)}^i.$
Suppose that $y_{\beta_1(1)}^1 \in [C_{\beta_2(t_2)}^*]$. Then $K = (P_{21}, \dots ,P_{2t_2}, P_{11}, \dots ,P_{1t_1})$ is seen to be a path-chain rooted at $Y_0.$ 
We call such a path-chain the {\bf composition} of $K_2$ with $K_1$ and denote it by $K_1 \circ K_2.$  See Figure \ref{fig1}.

\begin{figure}
\centering
\begin{tikzpicture}[scale=1.25]
\draw[fill=green] (0,0) ellipse (2 and .25) node[left=2.5cm]{$[C_{\beta_1(t_1)}^*]$};
\node at (-1.3,0){$y_{\beta_1(t_1)}$};
\draw[fill=black] (-.85,0) circle(0.04);

\draw[fill=green] (0,-1) ellipse (2 and .25) node[left=2.5cm]{$[C_{\beta_1(t_1-1)}^*]$};
\node at (-1.4,-1){$y_{\beta_1(t_1-1)}$}; \node at (1.4,-1){$y_{\beta_1(t_1)}^1$};

\draw[fill=black] (-.85,-1) circle(0.04);
\draw[fill=black] (.85,-1) circle(0.04);

\draw (-.85,0)  .. controls (-.70,-.60) and (-.15,-.60) ..  (0,-.5)  .. controls (.15, -.30) and (.70, -.30) ..  node[below]{\small{$P_{1t_1}$}}(.85,-1);

\draw[fill=green] (0,-2) ellipse (2 and .25) node[left=2.5cm]{$[C_{\beta_1(t_1-2)}^*]$};
\node at (-1.40,-2){\small{$y_{\beta_1(t_1-2)}$}}; \node at (1.40,-2){\small{$y_{\beta_1(t_1-1)}^1$}};
\draw (-.85,-1)  .. controls (-.70,-1.60) and (-.15,-1.60) ..  (0,-1.5)  .. controls (.15, -1.30) and (.70, -1.30) ..  node[below]{\small{$P_{1(t_1-1)}$}}(.85,-2);
\draw[fill=black] (-.85,-2) circle(0.04);
\draw[fill=black] (.85,-2) circle(0.04);

\draw[fill=green] (0,-3) ellipse (2 and .25) node[left=2.5cm]{$[C_{\beta_1(t_1-3)}^*]$};
\node at (-1.40,-3){\small{$y_{\beta_1(t_1-3)}$}}; \node at (1.40,-3){\small{$y_{\beta_1(t_1-2)}^1$}};
\draw (-.85,-2)  .. controls (-.70,-2.60) and (-.15,-2.60) ..  (0,-2.5)  .. controls (.15, -2.30) and (.70, -2.30) ..  node[below]{\small{$P_{1(t_1-2)}$}}(.85,-3);
\draw[fill=black] (-.85,-3) circle(0.04);
\draw[fill=black] (.85,-3) circle(0.04);

\draw[fill=green] (0,-5) ellipse (2 and .25) node[left=2.5cm]{$[C_{\beta_1(1)}^*]$};
\node at (-1.40,-5){\small{$y_{\beta_1(1)}$}}; \node at (1.40,-5){\small{$y_{\beta_1(2)}^1$}};
\draw (-.85,-4)  .. controls (-.70,-4.60) and (-.15,-4.60) ..  (0,-4.5)  .. controls (.15, -4.30) and (.70, -4.30) ..  node[below]{\small{$P_{12}$}}(.85,-5);
\draw[fill=black] (-.85,-5) circle(0.04);
\draw[fill=black] (.85,-5) circle(0.04);

\node at (0,-3.5){\Large{$\cdot$}};
\node at (0,-3.75){\Large{$\cdot$}};
\node at (0,-4){\Large{$\cdot$}};

\draw[fill=green] (5,0) ellipse (2 and .25) node[right=2.5cm]{$[C_{\beta_2(t_2)}^*]$};
\node at (3.60,0){\small{$y_{\beta_2(t_2)}$}}; \node at (6.40,0){\small{$y_{\beta_1(1)}^1$}};
\draw[fill=black] (4.15,0) circle(0.04);
\draw[fill=black] (5.85,0) circle(0.04);

\draw (-.85,-5)  .. controls (.8,-5.5) and (2.3,-5.5) ..  (2.5, -5) .. controls (3, -4) and (3, -1) ..  node[left]{\small{$P_{11}$}}(2.5,0);
\draw (2.5,0) .. controls (2.2,.5) and (2.35,.75) .. (2.6,1) .. controls (3.4,1.55) and (4.85, .75) .. (5.85,0);

\draw[fill=green] (5,-1) ellipse (2 and .25) node[right=2.5cm]{$[C_{\beta_2(t_2-1)}^*]$};
\node at (3.60,-1){\small{$y_{\beta_2(t_2-1)}$}}; \node at (6.40,-1){\small{$y_{\beta_2(t_2)}^2$}};
\draw (4.15,0)  .. controls (4.30,-.60) and (4.85,-.60) .. (5,-.5)  .. controls (5.15, -.30) and (5.70, -.30) ..  node[below]{\small{$P_{2t_2}$}}(5.85,-1);
\draw[fill=black] (4.15,-1) circle(0.04);
\draw[fill=black] (5.85,-1) circle(0.04);

\draw[fill=green] (5,-2) ellipse (2 and .25) node[right=2.5cm]{$[C_{\beta_2(t_2-2)}^*]$};
\node at (3.60,-2){\small{$y_{\beta_2(t_2-2)}$}}; \node at (6.40,-2){\small{$y_{\beta_2(t_2-1)}^2$}};
\draw (4.15,-1)  .. controls (4.30,-1.60) and (4.85,-1.60) .. (5,-1.5)  .. controls (5.15, -1.30) and (5.70, -1.30) ..  node[below]{\small{$P_{2(t_2-1)}$}}(5.85,-2);
\draw[fill=black] (4.15,-2) circle(0.04);
\draw[fill=black] (5.85,-2) circle(0.04);

\draw[fill=green] (5,-3) ellipse (2 and .25) node[right=2.5cm]{$[C_{\beta_2(t_2-3)}^*]$};
\node at (3.60,-3){\small{$y_{\beta_2(t_2-3)}$}}; \node at (6.40,-3){\small{$y_{\beta_2(t_2-2)}^2$}};
\draw (4.15,-2)  .. controls (4.30,-2.60) and (4.85,-2.60) .. (5,-2.5)  .. controls (5.15, -2.30) and (5.70, -2.30) ..  node[below]{\small{$P_{2(t_2-2)}$}}(5.85,-3);
\draw[fill=black] (4.15,-3) circle(0.04);
\draw[fill=black] (5.85,-3) circle(0.04);

\draw[fill=green] (5,-5) ellipse (2 and .25) node[right=2.5cm]{$[C_{\beta_2(1)}^*]$};
\node at (3.60,-5){\small{$y_{\beta_2(1)}$}}; \node at (6.40,-5){\small{$y_{\beta_2(2)}^2$}};
\draw (4.15,-4)  .. controls (4.30,-4.60) and (4.85,-4.60) .. (5,-4.5)  .. controls (5.15, -4.30) and (5.70, -4.30) ..  node[below]{\small{$P_{22}$}}(5.85,-5);
\draw[fill=black] (4.15,-5) circle(0.04);
\draw[fill=black] (5.85,-5) circle(0.04);

\draw[fill=green] (5,-6) ellipse (2 and .25) node[right=2.5cm]{$Y_0$};
\node at (6.40,-6){\small{$y_{\beta_2(1)}^2$}};
\draw (4.15,-5)  .. controls (4.30,-5.60) and (4.85,-5.60) .. (5,-5.5)  .. controls (5.15, -5.30) and (5.70, -5.30) ..  node[below]{\small{$P_{21}$}}(5.85,-6);
\draw[fill=black] (5.85,-6) circle(0.04);

\node at (5,-3.5){\Large{$\cdot$}};
\node at (5,-3.75){\Large{$\cdot$}};
\node at (5,-4){\Large{$\cdot$}};
\end{tikzpicture}
\caption{The composition of the path-chain $K_2 = (P_{21}, \dots ,P_{2t_2})$ with $K_1 = (P_{11}, \dots ,P_{1t_1})$}\label{fig1}
\end{figure}
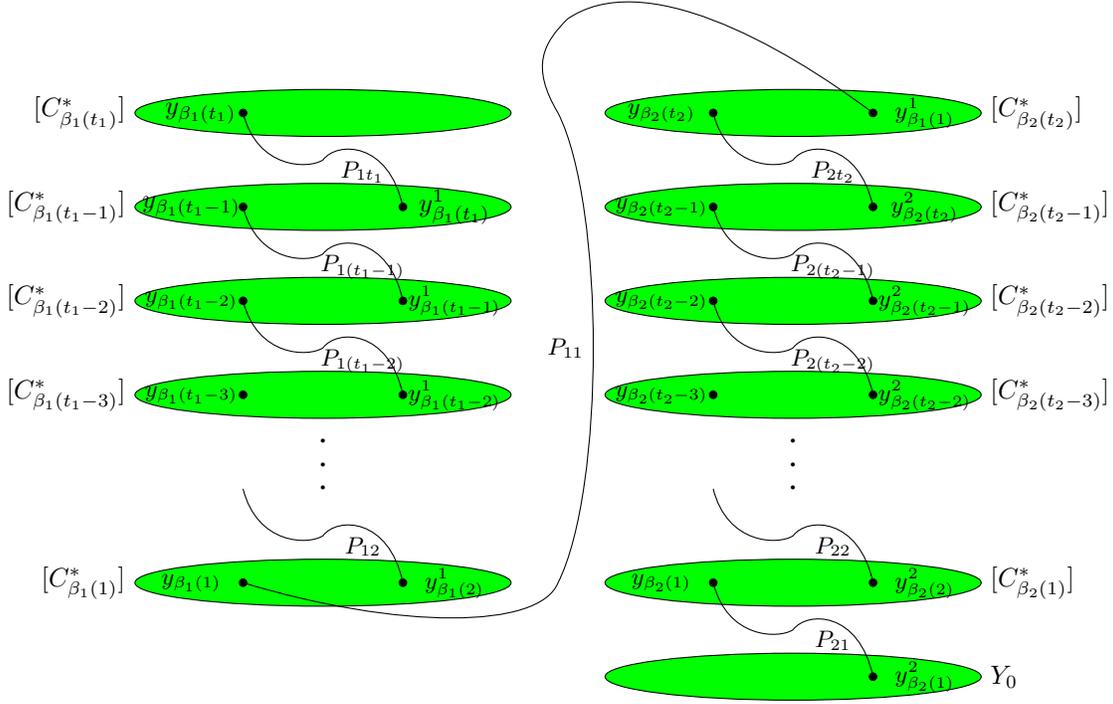

\subsection{Targets}

When we modify $W$, we also change $A$ so that it becomes a new basis $A'.$  The changes we make will be such that when $a_i \in A$ is replaced by an element $a_i' \in A'$ which we view as the {\it target} of $a_i.$   We will require that our targets $a_i'$ belong to a subset $T_i \subseteq C_i^*.$

\begin{definition} A {\bf target} for $A$ is a $k$-tuple of subsets $\T = (T_1,T_2, \dots ,T_k)$ where for all $i\in [k]$, $a_i \in T_i$ and $T_i \subseteq C_i^*.$\end{definition}

Let $K= (P_1, \dots ,P_t)$ be a path-tangle rooted at $Y_0$, where for all $i \in [t],$ $y_{\beta(i)} \xrightarrow [W] {P_i} y_{\beta(i)}'$.  Then $K$ is 
said to be {\bf consistent} with the target $\T$ if for $i=2, \dots ,t$, either $y_{\beta(i)}' \in Y_0$ or $y_{\beta(i)}' \in [T_{\beta(i')}]$ for some $i' < i.$

\subsection{The flat $\mathbf{F_0}$}

For the remainder of the paper, we shall assume that there are flats $F\in \frakF$ for which $\omega_W(F) < r_N(F).$  We shall assume that among such flats, $F_0$ is such a flat having least rank.

\begin{definition} Let $F_0 \in \frakF$ be a flat where 
\begin{itemize}
\item $\omega_W(F_0) < r_N(F_0).$ 
\item  For all flats $F\in \frakF$ where $r_N(F) < r(F_0),$ we have $\omega_W(F) = r_N(F).$
\end{itemize} 
\end{definition}  
In addition, we define a subsets $A^* \subseteq A$ and $A^{\spadesuit}\subseteq A$ as follows:

\begin{definition} $A^* = \{ a_i \in A\ \big| \ r_N(F_i) \le r_N(F_0) \ \mathrm{and}\ \omega_W(F_i) = r_N(F_i) \}$ and $A^{\spadesuit} = A - (F_0 \cup A^*).$\end{definition}

\noindent {\bf Note}: If $a_i \in F_0 - A^*,$ then $F_i = F_0.$ To see this, if $F_i \ne F_0,$ then $F_i \subset F_0.$  By our choice of $F_0$, we have $\omega_W(F_i) = r_N(F_i),$ implying that $a_i \in A^*,$ a contradiction. 

When we change the basis $A$ to a basis $A'$, we want to ensure that for all flats $F_i \in \frakF$ where $r_N(F_i) \le r_N(F_0),$  $|A' \cap F_i| \ge |A \cap F_i|.$  In particular, if $a_i \in A \cap (F_0 \cup A^*) = A - A^{\spadesuit}$ and $a_i$ is replaced by its target $a_i'$, then we want to ensure that $a_i' \in F_i.$ Otherwise, if $a_i \in A^{\spadesuit},$ then we allow $a_i'\not\in F_i.$  For this reason, we shall need to refine the target sets we use. 

\begin{definition} We say that $\T = (T_1,T_2, \dots ,T_k)$ is an $\mathbf{F_0}${\bf - target} if:
\begin{itemize}
\item For all $a_i \in A \cap (F_0 \cup A^*) = A - A^{\spadesuit}$, $T_i = F_i'.$
\item For all $a_i \in A^{\spadesuit},$ $T_i = C_i^*.$
\end{itemize}
\end{definition}

\begin{lemma}
Let $\T = (T_1,T_2, \dots ,T_k)$ be an $F_0$ - target.
\begin{itemize}
\item[a)] If $a_i \in A^{*}$ and $a_j \not\in F_i,$ then $C_j^* \cap T_i = \emptyset.$
\item[b)] If $a_i \in A^{*}$ and $T_i \cap T_j \ne \emptyset,$ then $F_i = F_j$, $a_j \in A^{*},$ and $T_i \cap C_j^* = T_i \cap T_j$.
\item[c)] If $a_i, a_j \in A^{*}$ and $(C_i^* - T_i)\cap T_j \ne \emptyset,$ then $F_i \subset F_j.$ 
\item[d)]  If $a_i, a_j  \in F_0 - A^*$, then $C_i^* \cap T_j = T_i \cap T_j.$
\end{itemize}\label{lem-target}
\end{lemma}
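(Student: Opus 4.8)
The plan is to derive all four statements from a single \emph{membership criterion} for the flats $F_i$ with $a_i \in A^{*}$, after which each part reduces to a short set computation. First I would record that if $a_i \in A^{*}$ then $F_i = \mathrm{cl}_N(A \cap F_i)$: indeed $\omega_W(F_i) = |A\cap F_i| = r_N(\mathrm{cl}_N(A\cap F_i))$ and $\mathrm{cl}_N(A\cap F_i) \subseteq F_i$, so these two flats coincide. Since $E(N) = H_j \sqcup C_j^{*}$ for every $j$, this yields the criterion: for $a_i \in A^{*}$ and any $j \in [k]$,
\[ a_j \notin F_i \iff A\cap F_i \subseteq A - a_j \iff F_i \subseteq H_j \iff F_i \cap C_j^{*} = \emptyset, \]
equivalently $a_j \in F_i \iff F_i \cap C_j^{*} \ne \emptyset$. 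I would also use throughout that $\frakF$ is closed under intersection, so $F_i$ is the least flat of $\frakF$ containing $a_i$; in particular $a_j \in F$ with $F\in \frakF$ forces $F_j \subseteq F$.

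Parts (a), (c), (d) then follow quickly. For (a): $a_i \in A^{*}$ gives $T_i = F_i'' = C_i^{*} \cap F_i \subseteq F_i$, and $a_j \notin F_i$ gives $F_i \cap C_j^{*} = \emptyset$, whence $C_j^{*} \cap T_i \subseteq C_j^{*} \cap F_i = \emptyset$. For (d): by the Note preceding the lemma, $a_i, a_j \in F_0 - A^{*}$ force $F_i = F_0 = F_j$, so $T_i = C_i^{*} \cap F_0$ and $T_j = C_j^{*} \cap F_0$, and then $C_i^{*} \cap T_j = C_i^{*} \cap C_j^{*} \cap F_0 = (C_i^{*}\cap F_0)\cap(C_j^{*}\cap F_0) = T_i \cap T_j$. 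For (c): with $a_i, a_j \in A^{*}$ we have $C_i^{*} - T_i = C_i^{*} \setminus F_i$ and $T_j \subseteq C_j^{*} \cap F_j$, so any $x \in (C_i^{*} - T_i)\cap T_j$ lies in $C_i^{*} \cap F_j$ but not in $F_i$; applying the criterion to $F_j$ (valid since $a_j \in A^{*}$) gives $a_i \in F_j$, hence $F_i \subseteq F_j$, while $x \in F_j \setminus F_i$ shows $F_i \ne F_j$, so $F_i \subsetneq F_j$.

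Part (b) is the substantive case. Assume $a_i \in A^{*}$ and $T_i \cap T_j \ne \emptyset$. Since $T_j \subseteq C_j^{*}$, we get $C_j^{*} \cap T_i \ne \emptyset$, so by the contrapositive of (a), $a_j \in F_i$, hence $F_j \subseteq F_i$ and $r_N(F_j) \le r_N(F_i) \le r_N(F_0)$. I would then promote $a_j$ into $A^{*}$: if $r_N(F_j) < r_N(F_0)$ the minimality in the choice of $F_0$ forces $\omega_W(F_j) = r_N(F_j)$; if $r_N(F_j) = r_N(F_0)$ then $r_N(F_j) = r_N(F_i) = r_N(F_0)$, so the nested equal-rank flats $F_j \subseteq F_i$ coincide and $\omega_W(F_j) = \omega_W(F_i) = r_N(F_i)$ because $a_i \in A^{*}$; either way $a_j \in A^{*}$. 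Now $T_i \subseteq C_i^{*}$ gives $C_i^{*} \cap T_j \ne \emptyset$, and the criterion applied to $F_j$ gives $a_i \in F_j$, so $F_i \subseteq F_j$ and therefore $F_i = F_j$. Finally $T_i = C_i^{*}\cap F_i$ and $T_j = C_j^{*} \cap F_i$ give $T_i \cap T_j = C_i^{*} \cap C_j^{*} \cap F_i = (C_i^{*}\cap F_i)\cap C_j^{*} = T_i \cap C_j^{*}$.

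I expect the only genuine obstacle to be the bookkeeping in (b) that upgrades $a_j$ to an element of $A^{*}$ and collapses $F_i$ onto $F_j$; this is exactly the point where the minimal-rank choice of $F_0$ and the lattice structure of $\frakF$ (closure under intersection) are really used. Everything else is the membership criterion together with the partition $E(N) = H_j \sqcup C_j^{*}$ and elementary set algebra, so I would not expect any further difficulty.
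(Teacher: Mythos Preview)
Your proof is correct and follows essentially the same approach as the paper's: both hinge on the fact that when $a_i \in A^{*}$ the flat $F_i$ is spanned by $A\cap F_i$, so $a_j \in F_i \iff F_i \cap C_j^{*} \ne \emptyset$, and all four parts reduce to set computations from there. Your version is in fact more explicit than the paper's---you isolate the membership criterion up front, you spell out in (b) the case split on $r_N(F_j)$ that justifies $a_j \in A^{*}$ (the paper simply asserts it), you verify the third conclusion $T_i\cap C_j^{*}=T_i\cap T_j$ of (b) which the paper omits, and your witness argument for strict inclusion in (c) is slightly cleaner than the paper's contradiction via $C_i^{*}\cap T_j = T_i\cap T_j$.
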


\begin{proof}
\begin{itemize}
\item[a)] Suppose $a_i \in A^{*}$ and $a_j \not\in F_i.$  By definition of $A^*,$ $\omega_W(F_i) = r_N(F_i).$  It follows that $F_i \subseteq H_j$ and thus $C_j^* \cap T_i = C_j^* \cap F_i' = \emptyset.$
\item[b)] Suppose $a_i \in A^{*}$ and $T_i \cap T_j \ne \emptyset.$  By a), $a_j\in F_i$ and thus $F_j \subseteq F_i$.  Consequently,  $a_j \in A^*.$   If $a_i \not\in F_j,$ then $T_i \cap T_j = \emptyset$ (by a) ).
Thus $a_i \in F_j$ and consequently $F_i = F_j.$  From this, it follows that $T_i \cap C_j^* = F_i' \cap C_j^* = F_i' \cap F_j' = T_i \cap T_j.$
\item[c)] Suppose $a_i, a_j \in A^{*}$ and $(C_i^* - T_i)\cap T_j \ne \emptyset.$  By a), $a_i \in F_j$ and hence $F_i \subseteq F_j.$  If $F_i = F_j$, then 
$C_i^* \cap T_j = C_i^* \cap F_j' = C_i^* \cap (C_j^*\cap F_j) = (C_i^*\cap F_i) \cap (C_j^*\cap F_j) = F_i' \cap F_j' = T_i \cap T_j.$  However, this contradicts the assumption that $(C_i^* - T_i)\cap T_j \ne \emptyset.$ 
Thus $F_i \subset F_j.$
\item[d)] Assume that $a_i, a_j  \in F_0 - A^*$.  We first observe that $F_i = F_j = F_0.$   Thus $T_i = F_i' = C_i^* \cap F_0$ and $T_j = F_j' = C_j^* \cap F_0.$
We now see that $C_i^* \cap T_j = C_i^* \cap (C_j^* \cap F_0) = (C_i^*\cap F_0) \cap (C_j^* \cap F_0) = T_i \cap T_j.$
\end{itemize}
\end{proof}

\begin{definition}Let $\T = (T_1, \dots ,T_k)$ be a target for $A$  and let $K = (P_1, \dots ,P_t)$ be a path-chain consistent with $\T$ and rooted at $Y_0$ where $\beta:[t] \rightarrow [k]$ is such that
$\forall i\in [t], y_{\beta(i)} \xrightarrow [W] {P_i} y_{\beta(i)}'$.  Then $K$ 
is said to be {\bf simple} if the paths $P_1, \dots ,P_t$ are vertex-disjoint and for $j=2, \dots, t,$ $\psi(y_{\beta(j)}') \in T_{\beta(j-1)} \backslash \bigcup_{j' <j-1}T_{\beta(j')}.$\end{definition}

\section{Choosing $W$ so that $\forall v \in V, \delta_{p+1}(v) \ge 0$}

We wish to choose $W= W_{p+1}$ in $G_p$ in such a way that $\forall v \in V,\ \delta_{p+1}(v) \ge 0.$  
To do this, we need to ensure that  $\forall v\in V, \ d_{p+1}(v) \le \eta-1.$
It follows by Lemma \ref{obs1.5} that for all vertices $v\in V,$ $\delta_{p+1}(v) = \delta_p(v) - \mathbf{1}_{\overline{V}_W}(v).$  
Given that for all $v\in V,\ \delta_p(v) \ge 0,$  it suffices to choose $W$ in such a way that $\forall v\in \overline{V}_W, \ \delta_p(v) >0.$
To achieve this goal, we will show that that $W$ can be changed incrementally using alternating paths.

\begin{proposition}
One can choose the matching $W$ in $G_{p}$ such that $\forall v\in \overline{V}_W, \ \delta_p(v) >0.$
\label{pro-chooseW1}
\end{proposition}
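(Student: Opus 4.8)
The goal is to modify the matching $W = W_{p+1}$ (which by Lemma \ref{lem-augment} already exists with $E_W$ a basis) so that every vertex of $\overline{V}_W$ has positive $\delta_p$-deficit. Suppose not, and let $y_i \in \overline{V}_W$ be a ``bad'' vertex, i.e.\ $\delta_p(y_i) = 0$, equivalently $d_p(y_i) = \eta$. The idea is to find an alternating path $P$ with respect to $W$ in $G_p$ that starts at some vertex $z \in \overline{V}_W$ with $\delta_p(z) > 0$ (or starts at a $U$-vertex incident to an uncovered $V$-vertex with positive deficit) and terminates at $y_i$, with the first edge of $P$ incident to $y_i$ being a non-matching edge; switching along $P$ then removes $y_i$ from $\overline{V}_W$ while only adding $z$ (which had slack to spare) to $\overline{V}_W$, so that the new uncovered set $\overline{V}_{W'}$ has strictly smaller total deficit at its bad vertices, or fewer bad vertices. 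Iterating, we reach a matching with no bad vertices.

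**The key steps.** First I would set up the alternating-path machinery already developed: given a bad vertex $y_i$, I want a $Y$-path in the sense of the definition preceding Lemma \ref{lem3}, taking $Y = \{y_i\}$ but with roles reversed — an alternating path from $\overline{V}_W \setminus \{y_i\}$ (or more precisely from a vertex of positive deficit) into $\tilde Y = \{y_i\}$. Since $\delta_p(\overline{V}_W)$ is governed by Lemma \ref{obs1.1} (the total deficit of $G_p$ is $k\eta$, distributed over the $k$ vertices of $\overline{V}_W$), and since at least one vertex of $\overline{V}_W$ must have deficit $\ge 1$ unless we are already done — actually if $y_i$ has deficit $0$ then the other $k-1$ vertices of $\overline{V}_W$ carry all of the deficit $k\eta$, so by pigeonhole some $y_j$ has $\delta_p(y_j) \ge \lceil k\eta/(k-1)\rceil > \eta$ — wait, I need to be careful: the total deficit of $G_p$ is $k\eta = \sum_{v \in V}\delta_p(v)$, and every $v \notin \overline{V}_W$ has $d_p(v)$ possibly less than $\eta$ too. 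So I would instead argue directly: $e_p(U_{W,Y'}, \overline{V}_W) \ge \delta_p(Y')$ from Lemma \ref{obs-alt}, applied to $Y' = V_W$, gives $e_p(\cdot,\overline{V}_W) \ge \delta_p(V_W)$, and I want to reach $y_i$ specifically. The clean route is: consider the set $U^* = U_{W,\{y_i\}}$ of $U$-vertices reachable from $y_i$ by an alternating path beginning with the matching edge at... no — $y_i$ is uncovered, so alternating paths from $y_i$ begin with a non-matching edge. Let $U^*$ be the $U$-vertices reachable from $y_i$ by such alternating paths, and $Y^* \subseteq V_W$ the $V_W$-vertices matched into $U^*$, plus the uncovered $V$-vertices reachable. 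If none of the reachable uncovered vertices (other than $y_i$) has positive deficit, and no reachable $U$-vertex is adjacent to an uncovered vertex of positive deficit, then a degree count on $U^*$: $\eta|U^*| = d_p(U^*) = e_p(U^*, Y^*) + e_p(U^*, \text{reachable uncovered}) \le |Y^*|\eta - \delta_p(Y^*) + (\text{edges to zero-deficit uncovered vertices})$. Since each reachable uncovered vertex has degree $\le \eta$ but all its neighbours are then in $U^*$, and $|Y^*| \le |U^*|$, I would derive $\delta_p(Y^*) \le$ (bounded quantity), and combined with $\delta_p(\overline{V}_W) = $ large (total deficit $k\eta$ minus the deficit on $V_W$), get a contradiction when $\eta$ is large compared to $k$. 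So the reachable set must contain a vertex with slack, giving the desired augmenting-type path.

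**Main obstacle.** The delicate point is ensuring that switching along $P$ to fix $y_i$ does not create new bad vertices or, more importantly, that it keeps $E_{W'}$ a basis — but in fact switching along an alternating path in a bipartite graph automatically preserves the matching property and the saturation of $U$, and $\overline{V}_{W'} = (\overline{V}_W \setminus \{y_i\}) \cup \{z\}$ where $z$ is the starting uncovered vertex; by Lemma \ref{obs2.55} $E_{W'}$ is a basis iff $\psi_V(\overline{V}_{W'})$ is a basis of $N$, which needs $\psi_V(z) \notin \mathrm{cl}_N(\psi_V(\overline{V}_W \setminus \{y_i\}))$. This is the real constraint: not every slack vertex $z$ will do; I need $z$ with $\psi_V(z)$ in the cocircuit $C_i^* = \overline{H_i}$ (the ``target'' condition discussed in the Targets subsection), so that exchanging $a_i$ for $\psi_V(z)$ keeps a basis. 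So the honest plan is: restrict attention to alternating paths from $y_i$ that reach a positive-deficit uncovered vertex $z$ with $\psi_V(z) \in C_i^*$, and rerun the degree-counting argument with the reachable set confined accordingly — here $\delta_p(Z_i) = \delta_p(\psi_V^{-1}(C_i^*)) \ge \eta$ (shown in Section \ref{sec-Fset}, since $C_i^* = \overline{H_i}$ and $\delta_p(H_i) \le (k-1)\eta$), which is exactly the reservoir of deficit I need to force the existence of such a $z$ once $\eta > c_0$ or so. I expect the bulk of the work to be this reachability/degree-count estimate confirming that a deficit of order $\eta$ inside $Z_i$ cannot be ``hidden'' behind a reachable set all of whose $U$-vertices fail to see it, for $\eta$ sufficiently large relative to the constant $c_0$ from Lemma \ref{obs-lowerbound}; and then a short induction on $\sum_{\text{bad } y}\delta_p(y)$ or on the number of bad vertices to finish.
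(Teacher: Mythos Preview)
Your induction on the number of bad uncovered vertices and your identification of the constraint $\psi_V(z)\in C_i^*$ are both on target, but the core mechanism is off. A small point first: you describe $z$ as an uncovered vertex that switching will ``add to $\overline V_W$'', but since $W$ saturates $U$ there is no alternating path between two uncovered $V$-vertices (that would be an augmenting path); the path must run from $y_i\in\overline V_W$ to a \emph{covered} $z\in V_W$, and switching covers $y_i$ while uncovering $z$.

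The substantive gap is that a single such path from $y_i$ to a positive-deficit $z\in Z_i\cap V_W$ need not exist. Your degree count on $U^*$ is incorrect --- edges from $U^*$ can land in $V_W\setminus Y^*$, not only in $Y^*$ and in reachable uncovered vertices --- and the valid count on $Y^*\cup\{y_i\}$ yields only $\delta_p(Y^*)\ge\eta$, whereas you need $\delta_p(Y^*\cap Z_i)>0$; the hyperplane bound $\delta_p(H_i)\le(k-1)\eta$ gives nothing here once $k\ge2$. The estimate $\delta_p(Z_i)\ge\eta$ you invoke guarantees via Lemma~\ref{lem3} only that \emph{some} $y_j\in\overline V_W$ admits a path into $Z_i\cap V_W$, not that $y_i$ itself does. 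The paper resolves this by building a \emph{path-tangle}: find a $Y_0$-path from whichever $y_{i_1}$ happens to work, enlarge the target to $Y_1=Y_0\cup Z_{i_1}$ (now with deficit $\ge2\eta$), find a $Y_1$-path from a new $y_{i_2}$, and iterate until $y_k$ is the originating vertex. One then switches along the entire resulting simple path-chain, replacing several $a_{i_j}$ by $a_{i_j}'$ simultaneously; checking that $A'=A-\{a_{i_1},\dots,a_{i_t}\}+\{a_{i_1}',\dots,a_{i_t}'\}$ is still a basis of $N$ is a separate inductive argument using the cocircuits $C_{i_j}^*$ (Claim~\ref{claim-pathtangle2}). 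This multi-path construction, and the accompanying basis-preservation check, are the ideas your plan is missing.
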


\begin{proof}
 We may assume that there are vertices 
$y_i\in \overline{V}_W$ such that 
$\delta_p(y_i) = 0.$  It suffices to show that one can construct a matching $W'$ for which $V_{W'}$ is a basis and $\overline{V}_{W'}$ has fewer vertices $v$ than $\overline{V}_W$ for which $\delta_p(v) = 0.$
To begin with, we may assume (without loss of generality) that $\delta_p(y_k) = 0.$   

\begin{claim}  Let  $Y_0 = [C_k^*]- y_k.$ There exists a path-tangle $K = (P_1, \dots ,P_t)$ rooted at $Y_0$ where for some function $\beta:[t] \rightarrow [k],\ \beta(t) =k,$ and $\forall i\in [t], \ y_{\beta(i)} \xrightarrow [W] {P_i} y_{\beta(i)}'.$ \label{claim-pathtangle}
\end{claim}

\begin{proof} By Observation \ref{obs-deltabound} we have $\delta_p(Y_0) = \delta_p(C_k^*)  \ge \eta.$
 By Lemma \ref{lem3}, there is an $Y_0$-path $P_1$ from some vertex $y_{i_1} \in \overline{V}_W$ to some vertex $y_{i_1}' \in \tilde{Y}_0$ (where $\delta_p(y_{i_1}') >0$).  If $y_{i_1} = y_k,$ then $K=P_1$ is the required path-tangle.  Thus we may assume that $y_{i_1} \ne y_k.$  Let $Y_1 = Y_0 \cup [C_{i_1}^*] = ([C_k^*] \cup [C_{i_1}^*]) - y_k.$  Then by Observation \ref{obs-deltabound}, $\delta_p(Y_1)  = \delta_p([C_{i_1}^*] \cup [C_k^*]) = \delta_p (C_{i_1}^* \cup C_{k}^*) \ge 2\eta$ and $Y_1 \cap \{ y_1, \dots ,y_k \} = \{ y_{i_1} \}.$  By Lemma \ref{obs6}, there is a $Y_1$-path $y_{i_2} \xrightarrow [W]{P_2} y_{i_2}'$ from a vertex $y_{i_2} \in \overline{V}_W - Y_1$ to a vertex $y_{i_2}' \in \tilde{Y}_1$ (where $\delta_p(y_{i_2}') >0$)  If $y_{i_2} = y_k,$ then $K=(P_1,P_2)$ is the required path-tangle.  Thus we may assume that $y_{i_2} \ne y_k.$  Continuing, suppose we have constructed sets $Y_0,Y_1, \dots ,Y_s$ and paths $P_1, \dots ,P_{s+1}$ where 
 \begin{itemize}
 \item For all $j\in[s],$ $Y_j = Y_0 \cup [C_{i_1}^*] \cup \cdots \cup [C_{i_j}^*]$, and $P_{j+1}$ is a $Y_j$-path where $y_{i_{j+1}} \xrightarrow [W]{P_{j+1}} y_{i_{j+1}}'$.
 \item For all $j\in [s],$ $y_{i_{j+1}} \in \overline{V}_W - Y_j$ and $y_{i_{j+1}}' \in \tilde{Y}_{j}$ (and $\delta_p(y_{i_{j+1}}') >0$).
 \item For all $j\in [s]$, $i_j \ne k.$
 \end{itemize}
 
 If $i_{s+1} =k,$ then $K = (P_1, \dots ,P_{s+1})$ is the desired path-tangle.  Suppose $i_{s+1} \ne k$ and let $Y_{s+1} = Y_0 \cup [C_{i_1}^*] \cup \cdots \cup [C_{i_{s+1}}^*].$
By Observation \ref{obs-deltabound}, $\delta_p(Y_{s+1}) = \delta_p \left( C_k^* \cup C_{i_1}^* \cup \cdots \cup C_{i_{s+1}}^* \right) \ge (s+2)\eta.$ Since $|Y_{s+1} \cap \{y_1, \dots ,y_k \}| = s+1,$
it follows from
Lemma \ref{obs6}, that there is a $Y_{s+1}$-path 
$y_{i_{s+2}} \xrightarrow [W]{P_{s+2}} y_{i_{s+2}}'$
from some vertex $y_{i_{s+2}} \in \overline{V}_W - Y_{s+1}$ to some vertex $y_{i_{s+2}}' \in \tilde{Y}_{s+1}$ (where $\delta_p( y_{i_{s+2}}') >0$). If $y_{i_{s+2}} = y_k,$ then $K=(P_1, \dots ,P_{s+2})$ is the desired path-tangle.  On the other hand, if $y_{i_{s+2}} \ne y_k,$ then the process continues with $s+2$ in place of $s+1.$
Since this process must eventually terminate, we must eventually end up with the desired path-tangle $K = (P_1, \dots ,P_t)$.
\end{proof}

Let $K = (P_1, \dots ,P_t)$ be the path-tangle described in Claim \ref{claim-pathtangle}.
Among such path-tangles, we may assume that $K$ contains a minimum number of paths.  We claim that $K$ is a simple path-chain.  If for some $j_1 < j_2,$ the paths $P_{j_1}$ and $P_{j_2}$ intersect, then it follows by Observation \ref{obs-alternating1} that $y_{i_{j_2}} \xrightarrow [W]{P} y_{i_{j_1}}'$ for some path $P$. In this case, $K' = (P_1, \dots ,P_{j_1-1}, P, P_{j_2+1}, \dots ,P_t)$ would be a path-tangle with fewer paths.  Thus by our choice of $K$, the paths $P_1, \dots ,P_t$ must be pairwise disjoint.  By a similar argument using minimality, one can show that for $j = 2, \dots ,t,$  $\psi(y_{i_j}') \in C_{i_{j-1}}^* \backslash \bigcup_{j' < j-1}C_{i_{j'}}^*.$  Thus $K$ is a simple path-chain.

For all $j \in [t],$ let $a_{i_j}' = \psi(y_{i_j}').$  Since $K$ is a simple path-chain, we have for $j = 2, \dots ,t, \  y_{i_j}' \in [C_{i_{j-1}}^*],$ and thus for $j = 2, \dots ,t, \ a_{i_j}' \in C_{i_{j-1}}^*.$

\begin{claim} $A' = A - \{ a_{i_1}, \dots ,a_{i_t} \} + \{ a_{i_1}', \dots ,a_{i_t}' \}$ is a basis in $N.$
\label{claim-pathtangle2}
\end{claim}  

\begin{proof} First we claim that $A^1 = A  - a_{i_{t-1}} + a_{i_t}'$ is a basis.  Given this is clear if $a_{i_t}' = a_{i_{t-1}},$  we may assume that $a_{i_t}' \ne a_{i_{t-1}}.$  Then $(A + a_{i_t}') \cap C_{i_{t-1}}^* = \{ a_{i_{t-1}}, a_{i_t}' \}$ and it follows that $A^1$ is a basis in $N.$  Suppose now that for some $s\ge 1$, $A^s = A - \{ a_{i_{t-s}}, \dots ,a_{i_{t-1}} \} + \{ a_{i_{t-s+1}}', \dots ,a_{i_{t}}' \}$ is a basis.
Assuming $s < t-1$, we claim that $A^{s+1} = A^s - a_{i_{t-s-1}} + a_{i_{t-s}}'$ is a basis.  We may assume $a_{i_{t-s-1}} \ne a_{i_{t-s}}'$ (for otherwise, $A^{s+1} = A^s$).  
Since $K$ is a simple path-chain, we have for $j= t-s+1, \dots ,t,$ $a_{i_j}' \not\in C_{i_{t-s-1}}^*.$  Consequently, $(A^s + a_{i_{t-s}}') \cap C_{i_{t-s-1}}^* = \{ a_{i_{t-s-1}}, a_{i_{t-s}}' \},$ and thus $A^{s+1}$ is a basis.
By induction, we see that $A^{t-1}  =  A - \{ a_{i_1}, \dots ,a_{i_{t-1}} \} + \{ a_{i_2}', \dots ,a_{i_t}' \}$ is a basis in $N.$  Since $K$ is a simple path-chain, it follows that for $j=2, \dots ,t,$ $a_{i_j}' \not\in C_k^*$ and this implies that $(A^{t-1} + a_{i_1}')\cap C_k^* = \{ a_k, a_{i_1}' \}.$  It now follows that $A' = A^{t-1} - a_k + a_{i_1}$ is a basis.
\end{proof}

Finally, let $W' = W \triangle E(P_1 \cup \cdots \cup P_t).$  Then $\overline{V}_{W'} = \overline{V}_W - \{ y_{i_1}, \dots , y_{i_t} \} + \{ y_{i_1}', \dots , y_{i_t}' \}$ and $A' = \psi(\overline{V}_{W'}).$
Since $\forall j\in [t],\ \delta_p(y_{i_j}') >0,$ we see that $\overline{V}_{W'}$ has fewer vertices $v$ than $\overline{V}_W$ for which $\delta_p(v) = 0.$  Furthermore, since $A'$ is a basis in $N$, it follows that $V_{W'}$ is a basis in $M.$  
 \end{proof}

\section{Choosing $W$ so that for all flats $F\in \frakF$, $\omega_W(F) = r_N(F)$} 
In light of Proposition \ref{pro-chooseW1}, we may assume that $\forall v\in \overline{V}_W, \ \delta_p(v) >0.$  Our goal in this section is to prove the following theorem:

\begin{theorem} The matching $W$ can be chosen so that
\begin{itemize}
 \item[i)]$V_W$ is a basis 
 \item[ii)] $\forall v\in V, \ \delta_{p+1}(v) \ge 0$ and 
\item[iii)] $\forall F\in \frakF$, $\omega_W(F) = r_N(F)$.
\end{itemize}\label{the-augment}
\end{theorem}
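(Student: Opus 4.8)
The plan is to prove Theorem~\ref{the-augment} by an iterative improvement argument on the matching $W$, using the machinery of path-chains and $F_0$-targets developed in the previous sections. Starting from a matching $W$ furnished by Lemma~\ref{lem-augment} (and adjusted via Proposition~\ref{pro-chooseW1} so that $\delta_p(v)>0$ for every $v\in\overline V_W$), we may assume for contradiction that there exists a flat $F\in\frakF$ with $\omega_W(F)<r_N(F)$. We then choose $F_0$ of least rank among such flats and form the $F_0$-target $\T=(T_1,\dots,T_k)$ as defined above. The goal is to exhibit a \emph{simple path-chain} $K=(P_1,\dots,P_t)$ consistent with $\T$, rooted at an appropriate $Y_0$, whose final path reaches a vertex that lets us increase $\omega_W(F_0)$ by one when we switch the matching along the paths of $K$, while simultaneously not violating the target constraints (so that all the flats $F$ with $r_N(F)<r_N(F_0)$ keep $\omega_W(F)=r_N(F)$, and every replacement $a_i\mapsto a_i'$ stays inside $C_i^*$, preserving the basis property of $A$ via the repeated cocircuit-exchange argument as in Claim~\ref{claim-pathtangle2}).

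The key steps, in order, are: (i) set up the potential function — something like $\sum_{F\in\frakF}\big(r_N(F)-\omega_W(F)\big)$ or a lexicographic refinement that also counts how far $F_0$ is from being saturated — and show that a successful path-chain switch strictly decreases it, so the process terminates; (ii) use the lower bounds $\delta_p(F_i'')\ge\eta-c_0$ from Lemma~\ref{lem-B2} and $\delta_p\big(\bigcup_{a_i\in A'}F_i''\big)\ge|A'|\eta-c_1$ from Lemma~\ref{lem-B4}, together with the $Y$-path existence lemmas (Lemmas~\ref{lem3}, \ref{obs3}, \ref{obs6}), to guarantee that at each stage of building the path-chain there are enough vertices in $\overline V_W$ reachable by alternating paths that land inside the prescribed target sets $Z_i'$ — this is where the hypothesis that $\eta$ is much larger than $k$ (hence than $c_0,c_1$) is consumed; (iii) invoke Observation~\ref{obs-alternating1} and a minimality choice on the path-tangle to convert the tangle into a simple path-chain with vertex-disjoint paths, exactly as in the proof of Proposition~\ref{pro-chooseW1}; (iv) verify via Lemma~\ref{lem-target}(a)--(d) that consistency with the $F_0$-target forces the new basis $A'$ to satisfy $\omega_{W'}(F)=r_N(F)$ for all $F\in\frakF$ with $r_N(F)\le r_N(F_0)$, with $\omega_{W'}(F_0)$ now equal to $r_N(F_0)$, and that the switch does not destroy the property $\delta_{p+1}(e)\ge 0$ because every endpoint $y_i'$ of a path has $\delta_p(y_i')>0$; (v) conclude by descent.

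I expect the main obstacle to be step (iv): controlling the side effects of the switch on \emph{all} flats of $\frakF$ simultaneously, not just on $F_0$. When we perform the cocircuit exchanges $a_i\mapsto a_i'\in C_i^*$, an element that was counted by some other flat $F'$ may leave $F'$, or $a_i'$ may enter a flat where it over-counts; the target definition (with $T_i=F_i''$ for $a_i\in F_0\cup A^*$ and $T_i=C_i^*$ otherwise) is precisely engineered so that Lemma~\ref{lem-target} rules out the bad interactions, but marshalling those four cases into a clean invariant — and handling the flats of rank exactly $r_N(F_0)$ other than $F_0$, as well as flats of larger rank — will require care. A secondary difficulty is ensuring the path-chain can always be \emph{rooted at the right} $Y_0$: one wants $Y_0$ to correspond to the ``deficiency'' of $F_0$, i.e. to elements one could add to $A$ inside $F_0$ to raise its rank, and then to propagate along the $Z_i'$'s; choosing $Y_0$ so that $\psi_V^{-1}$ of a spanning independent set of $F_0$ disjoint from $A\cap F_0$ works, and verifying $\delta_p$ of the relevant union is at least $(\text{number of paths})\cdot\eta$ minus a $k$-bounded error, is the quantitative heart of the termination guarantee.
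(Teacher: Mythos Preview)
Your overall architecture matches the paper's: start from Proposition~\ref{pro-chooseW1}, pick $F_0\in\frakF$ of least rank with $\omega_W(F_0)<r_N(F_0)$, build a simple path-chain consistent with the $F_0$-target $\T$, switch along it to increase $\omega_W(F_0)$ while preserving $\delta_p(v)>0$ on $\overline V_{W'}$ and the partial order $W\preceq W'$, then iterate. Your steps (ii), (iii), (v) and your identification of $Y_0=\psi_V^{-1}(F_0\setminus (F_0)_A)$ are all correct and correspond to Propositions~\ref{pro-pathchain} and~\ref{pro-simplepathchain} in the paper.

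There is, however, a genuine gap at the point where you write ``preserving the basis property of $A$ via the repeated cocircuit-exchange argument as in Claim~\ref{claim-pathtangle2}.'' That argument does \emph{not} transfer. In Claim~\ref{claim-pathtangle2} the target sets were the full cocircuits $C_i^*$, so simplicity of the path-chain gave $a_{\beta(j)}'\notin C_{\beta(j')}^*$ for $j'<j-1$, and each exchange $(A^{t-s}+a_{\beta(s)}')\cap C_{\beta(s-1)}^*=\{a_{\beta(s-1)},a_{\beta(s)}'\}$ went through directly. With the $F_0$-target, for $a_i\in F_0\cup A^*$ one has $T_i=F_i''\subsetneq C_i^*$, and simplicity only yields $a_{\beta(j)}'\notin T_{\beta(j')}$; it can perfectly well happen that $a_{\beta(j)}'\in C_{\beta(j')}^*\setminus T_{\beta(j')}$, and then the naive cocircuit exchange fails. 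The paper handles this with Proposition~\ref{pro-SimpleTangle}: one builds an auxiliary directed graph $\overrightarrow{D}$ on $\{x_1,\dots,x_t\}$ with an arc $x_i\to x_j$ whenever $a_{\beta(j+1)}'\in C_{\beta(i)}^*$, proves (using Lemma~\ref{lem-target}(a)--(d)) that $\overrightarrow{D}$ is acyclic, and then invokes the parity lemma for acyclic digraphs (Lemma~\ref{lem-digraph}) to manufacture, at each step, a cocircuit $C^*=\triangle_{x_i\in X}C_{\beta(i)}^*$ meeting the current set in exactly the two required elements. This is the technical heart of the proof and is missing from your plan.

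Two smaller points you also skip: first, the path-chain must close up, i.e.\ satisfy $a_{\gamma(1)}'\in T_{\gamma(s)}$ with $a_{\gamma(s)}\in A^\spadesuit=A\setminus(F_0\cup A^*)$; the paper obtains this not from a single path-tangle but by \emph{composing} path-chains $K_1\circ K_2\circ\cdots$ (Proposition~\ref{pro-pathchain}). Second, for Proposition~\ref{pro-SimpleTangle} to apply one needs the extra minimality condition that no pair $i\le j$ with $j-i\le s'-2$ has $a_{\xi(i)}'\in A_0$ and $a_{\xi(j)}\in A^\spadesuit$; this is arranged in Proposition~\ref{pro-simplepathchain} by choosing the path-chain with fewest paths. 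Your step~(iv) correctly anticipates that Lemma~\ref{lem-target} is the tool for controlling side effects, but its real role is to establish acyclicity of $\overrightarrow{D}$, not to verify $\omega_{W'}(F)=r_N(F)$ directly; the latter follows almost immediately from $T_i\subseteq F_i$ once the basis property is in hand.
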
    

\begin{definition}
Let $W'$ and $W''$ be matchings in $G_p$ for which $V_{W'}$ and $V_{W''}$ are bases. We define a partial order $\preceq$ where $W' \preceq W''$ if the following holds:  for all  $F \in \frakF,$ where $r_N(F) \le r_N(F_0)$, if $\omega_{W'}(F) = r_N(F),$ then $\omega_{W''}(F) = r_N(F).$\end{definition}

We will introduce an procedure by which we alter $W$ and construct a matching $W'$ in $G_p$ for which
\begin{itemize}
\item $V_{W'}$ is a basis of $M$.
\item $\forall v\in \overline{V}_{W'}, \ \delta_p(v) >0,$
\item $W \preceq W'$
\item $\omega_{W'}(F_0) > \omega_W(F_0).$
\end{itemize}
Repeated iterations of the above procedure will eventually yield a matching $W'$ for for which $W \preceq W'$ and $\omega_{W'}(F_0) = r_N(F_0).$ 
If there are flats $F\in \frakF$ for which $\omega_{W'}(F) < r_N(F),$ then we choose such a flat $F_0'$ of least rank and repeat the procedure with $F_0'$ in place of $F_0$ and $W'$ in place of $W.$  Eventually we achieve a matching $W'$ for which $V_{W'}$ is a basis and for which $\forall F\in \frakF, \ \omega_{W'}(F) = r_N(F).$
Thus to prove Theorem \ref{the-augment}, it suffices to show that the procedure works.   We shall accomplish this by choosing a specific simple path-chain.

We shall need a lemma pertaining to the circuit exchanges for a collection of circuits.
Suppose $\C = \{ C_1, \dots ,C_{\ell} \}$ is a collection of circuits in a matroid.  Let $x_1, \dots ,x_{\ell}$ be distinct elements in the matroid where for $i = 1, \dots ,\ell,$ $x_i \in C_i.$  Let $\overrightarrow{D} = \overrightarrow{D}(\C, x_1, \dots ,x_\ell)$ be the directed graph on vertices $z_1, \dots ,z_{\ell}$ where $z_i \rightarrow z_j$ if $x_j \in C_i.$ 

\begin{lemma}
Suppose $\overrightarrow{D}$ is acyclic.  
Then for $i = 1, \dots ,\ell$, there is a circuit $C$ where $C\subseteq \bigcup_{j=1}^\ell C_j$ and $C \cap \{ x_1, \dots ,x_{\ell} \} = \{ x_i \}.$\label{lem-acyclic}
\end{lemma}

\begin{proof}
By induction on $\ell$.  When $\ell = 1,$ the lemma is clearly true.   Assume the lemma holds for collections of fewer than $\ell$ circuits and let $\C$ and $\{ x_1, \dots , x_{\ell} \}$ be as described above.  Since $\overrightarrow{D}$ is acyclic, it has at least one 
source vertex and without loss of generality we may assume $z_{\ell}$ is such a vertex.  Since for all $j = [\ell-1],$ $z_{\ell} \not\rightarrow z_j$, it follows that $C_{\ell} \cap \{ x_1, \dots ,x_{\ell} \} = \{ x_{\ell} \}.$  Thus the assertion in the lemma is true when $i=\ell.$  Let $i \in [\ell -1].$  We will show that there is a circuit $C \subseteq \bigcup_{j=1}^\ell C_j$ where $C\cap \{ x_1, \dots ,x_{\ell} \} = \{ x_i \}.$  We define circuits $C_j',\ j = 1, \dots ,\ell-1$ as follows:  if $z_j \rightarrow z_{\ell},$ then let $C_j'$ be a circuit where
$C_j' \subseteq C_j \cup C_{\ell} - x_{\ell}$ and $x_j \in C_j';$ such a circuit exists by the strong circuit elimination axiom.  If $z_j \not\rightarrow z_{\ell},$ then let $C_j' = C_j.$
Let $\C' = \{ C_1', \dots ,C_{\ell}' \}$ and let $\overrightarrow{D}' = \overrightarrow{D}(\C',x_1, \dots ,x_{\ell -1} \}$ be the corresponding directed graph on vertices $z_1',\dots ,z_{\ell-1}'$.  We claim that for all $i_1, i_2 \in [\ell-1],$ if $z_{i_1}' \rightarrow z_{i_2}',$ then $z_{i_1} \rightarrow z_{i_2}.$ To see this, 
suppose that $z_{i_1}' \rightarrow z_{i_2}'$ for some $i_1, i_2 \in [\ell-1].$  Then $x_{i_2} \in C_{i_1}'.$
If $C_{i_1}' = C_{i_1},$ then $x_{i_2} \in C_{i_1}$ and hence $z_{i_1} \rightarrow z_{i_2}.$  If $C_{i_1}' \ne C_{i_1},$ then $C_{i_1}' \subseteq C_{i_1} \cup C_{\ell} - x_\ell.$  Since $x_{i_2} \not\in C_{\ell},$ it follows that $x_{i_2} \in C_{i_1}$ and hence $z_{i_1} \rightarrow z_{i_2}.$  This proves our claim and in particular, $\overrightarrow{D}'$ must be acyclic since $\overrightarrow{D}$ is.  By assumption, there is a circuit $C' \subseteq \bigcup_{j=1}^{\ell-1}C_j'$ where $C' \cap \{ x_1, \dots ,x_{\ell-1} \} = \{ x_i \}.$  Thus $C_j' \subseteq \bigcup_{j=1}^\ell C_j$ and moreover, $C' \cap \{ x_1, \dots ,x_{\ell} \} = \{ x_i \}$ since $x_\ell \not\in \bigcup_{j=1}^{\ell-1}C_j'$.  Thus the lemma holds for $\C$ and this completes the induction.
\end{proof}

Let $A_0 =  F_0 -  \mathrm{cl}_N(F_0\cap A)$ and let $\T =  (T_1, \dots ,T_k)$ be an $F_0$-target and let $Y_0 = [A_0].$ 
We shall assume that $K= (P_1, \dots ,P_t)$ is a simple path-chain consistent with $\T$ rooted at $Y_0$ where for $\beta:[t] \rightarrow [k]$ we have $\forall i\in [t],\ y_{\beta(i)} \xrightarrow [W] {P_i} y_{\beta(i)}'.$  For all $i\in [t],$ let $a_{\beta(i)}' = \psi(y_{\beta(i)}').$  Our first task is to find conditions for $K$ such that when we exchange the elements $a_{\beta(i)}, i \in [t],$ in $A$ for the elements $a_{\beta(i)}',i \in [t]$, we achieve a new basis.

Recall that by the definition of an $F_0$-target, we have that $\forall a_i \in A^\spadesuit, \ T_i = C_i^*$ and $\forall a_i \in A - A^\spadesuit,\ T_i = F_i' = F_i \cap C_i^*.$ 

\begin{proposition}
Suppose $a_{\beta(t)} \in A^{\spadesuit}$ and $a_{\beta(1)}' \in A_0 \cap T_{\beta(t)}$ and suppose there is no pair $j, j' \in [t]$, $0 \le j-j' < t-1,$ for which $a_{\beta(j)} \in A^{\spadesuit}$ and 
$a_{\beta(j')}' \in A_0 \cap T_{\beta(j)}.$ Then $A - \{ a_{\beta(1)}, \dots ,a_{\beta(t)} \} + \{ a_{\beta(1)}', \dots ,a_{\beta(t)}' \}$ is a basis for $N$. 
\label{pro-SimpleTangle}
\end{proposition}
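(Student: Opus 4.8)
The plan is to mimic the inductive "swap along the path-chain from the top down" argument used in the proof of Claim \ref{claim-pathtangle2}, but now paying attention to the target structure provided by Lemma \ref{lem-target} rather than just the cocircuits $C_i^*$. Write $K=(P_1,\dots,P_t)$ with $y_{\beta(i)}\xrightarrow[W]{P_i}y_{\beta(i)}'$ and $a_{\beta(i)}'=\psi_V(y_{\beta(i)}')$. Because $K$ is a simple path-chain consistent with $\T$, we have $a_{\beta(i)}'\in T_{\beta(i-1)}$ for $i=2,\dots,t$ and, by hypothesis, $a_{\beta(1)}'\in T_{\beta(t)}\subseteq C_{\beta(t)}^*$. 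I will build a chain of bases $A=A^0, A^1,\dots,A^{t}$ where $A^{s}$ is obtained from $A$ by replacing $a_{\beta(t)},a_{\beta(t-1)},\dots,a_{\beta(t-s+1)}$ with $a_{\beta(t)}',a_{\beta(t-1)}',\dots,a_{\beta(t-s+1)}'$, processing the indices in the order $t, t-1, \dots, 1$, and then finally closing the cycle by swapping $a_{\beta(t)}$ for $a_{\beta(1)}'$ via $C_{\beta(t)}^*$ exactly as in Claim \ref{claim-pathtangle2}. Note the cyclic structure here: unlike the path-tangle rooted at a single removed vertex in Proposition \ref{pro-chooseW1}, here $a_{\beta(1)}'$ lands back in $T_{\beta(t)}$, so the "root" of the chain is the cocircuit $C_{\beta(t)}^*$ itself.

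At each step $s\to s+1$ I need the single-element exchange $A^{s}-a_{\beta(t-s-1)}+a_{\beta(t-s)}'$ to be a basis; by the standard cocircuit-exchange observation recalled in Section 2, this holds provided $(A^{s}+a_{\beta(t-s)}')\cap C_{\beta(t-s-1)}^*=\{a_{\beta(t-s-1)},a_{\beta(t-s)}'\}$. Two things must be checked: first that $a_{\beta(t-s)}'\in C_{\beta(t-s-1)}^*$, which is immediate from $a_{\beta(t-s)}'\in T_{\beta(t-s-1)}\subseteq C_{\beta(t-s-1)}^*$ (consistency with $\T$); and second that none of the already-introduced elements $a_{\beta(t)}',\dots,a_{\beta(t-s+1)}'$ lies in $C_{\beta(t-s-1)}^*$. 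The latter is where the target structure and the "no bad pair" hypothesis of the proposition do the work. Since $K$ is simple, for $j>j'$ we have $a_{\beta(j)}'\notin Z_{\beta(j')}'$, i.e. $a_{\beta(j)}'\notin T_{\beta(j')}$; but $a_{\beta(j)}'$ could still lie in $C_{\beta(j')}^*\setminus T_{\beta(j')}$, and this is precisely the case that Lemma \ref{lem-target}, parts (c) and (d), together with the extra hypothesis about no pair $j,j'$ with $a_{\beta(j)}\in A-(F_0\cup A^*)$ and $a_{\beta(j')}'\in F_0\cap T_{\beta(j)}$, is designed to rule out. So I would split into the cases according to whether $a_{\beta(t-s-1)}\in A^*$, $a_{\beta(t-s-1)}\in F_0-A^*$, or $a_{\beta(t-s-1)}\in A-(F_0\cup A^*)$, and in each case invoke the appropriate part of Lemma \ref{lem-target} to show that $C_{\beta(t-s-1)}^*\cap T_{\beta(j)}=T_{\beta(t-s-1)}\cap T_{\beta(j)}$ (or $=\emptyset$), hence that $a_{\beta(j)}'\notin C_{\beta(t-s-1)}^*$ for all relevant earlier $j$. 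The final swap $A^{t-1}-a_{\beta(t)}+a_{\beta(1)}'$ through $C_{\beta(t)}^*$ is handled the same way: $a_{\beta(1)}'\in T_{\beta(t)}\subseteq C_{\beta(t)}^*$ by hypothesis, and I must check $a_{\beta(j)}'\notin C_{\beta(t)}^*$ for $j=2,\dots,t$, again by simplicity ($a_{\beta(j)}'\notin T_{\beta(t)}$) plus Lemma \ref{lem-target}.

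The main obstacle is bookkeeping the case analysis cleanly: the target $\T$ is an $F_0$-target, so $T_i=F_i''=C_i^*\cap F_i$ when $a_i\in F_0\cup A^*$ and $T_i=C_i^*$ otherwise, and the interaction $C_i^*\cap T_j$ behaves differently depending on which of these regimes $a_i$ and $a_j$ fall into. The role of the "no bad pair $j,j'$" hypothesis is exactly to forbid the one troublesome configuration — an element $a_{\beta(j)}$ outside $F_0\cup A^*$ whose cocircuit $C_{\beta(j)}^*$ could recapture a previously-used $a_{\beta(j')}'$ sitting in $F_0$ — that Lemma \ref{lem-target} alone does not exclude, since that lemma's parts (a)–(d) all assume at least one of the two indices lies in $A^*$ or $F_0-A^*$. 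Once the exchange is justified at every step, the conclusion that $A-\{a_{\beta(1)},\dots,a_{\beta(t)}\}+\{a_{\beta(1)}',\dots,a_{\beta(t)}'\}=A^{t-1}-a_{\beta(t)}+a_{\beta(1)}'$ is a basis follows by induction, completing the proof.
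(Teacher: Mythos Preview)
Your approach has a genuine gap. You propose to do each exchange $A^{s}\to A^{s+1}$ using the \emph{single} cocircuit $C_{\beta(t-s-1)}^*$, and for this you need that none of the already-inserted elements $a_{\beta(t-s+1)}',\dots,a_{\beta(t)}'$ lies in $C_{\beta(t-s-1)}^*$. Simplicity of the path-chain only gives $a_{\beta(j)}'\notin T_{\beta(t-s-1)}$ for $j>t-s$, and when $a_{\beta(t-s-1)}\in F_0\cup A^*$ we have $T_{\beta(t-s-1)}=C_{\beta(t-s-1)}^*\cap F_{\beta(t-s-1)}\subsetneq C_{\beta(t-s-1)}^*$, so $a_{\beta(j)}'$ may well sit in $C_{\beta(t-s-1)}^*\setminus T_{\beta(t-s-1)}$. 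Lemma~\ref{lem-target} does \emph{not} rule this out: part~(c) says precisely that $(C_i^*-T_i)\cap T_j$ can be nonempty when $a_i,a_j\in A^*$ with $F_i\subset F_j$, and part~(d) only applies when both indices lie in $F_0-A^*$. The ``no bad pair'' hypothesis concerns indices $j$ with $a_{\beta(j)}\in A-(F_0\cup A^*)$, so it is irrelevant to the problematic case where $a_{\beta(t-s-1)}\in A^*$.

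The paper circumvents this by a different mechanism. It encodes the relations ``$a_{\beta(j+1)}'\in C_{\beta(i)}^*$'' as arcs $x_i\to x_j$ of an auxiliary digraph $\overrightarrow{D}$, and uses Lemma~\ref{lem-target} together with the ``no bad pair'' hypothesis not to forbid forward arcs (which can exist) but only to show $\overrightarrow{D}$ is \emph{acyclic}. Then, via Lemma~\ref{lem-digraph}, for each step it finds a subset $X$ of indices such that the \emph{symmetric difference} $C^*=\bigtriangleup_{x_i\in X}C_{\beta(i)}^*$ is a cocircuit meeting the current set in exactly the two required elements. In other words, the exchange cocircuit at each step is not a single $C_{\beta(i)}^*$ but a carefully chosen combination of several of them, and the acyclic-digraph lemma is what guarantees such a combination exists. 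This is the key idea your proposal is missing.
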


\begin{proof}
Let $\C^* = \{ C_{\beta(1)}^*, \dots ,C_{\beta(t)}^* \}$ and for $i = 1, \dots, t,$ let $x_i = a_{\beta(i+1)}',$ where $a_{\beta(t+1)}' = a_{\beta(1)}'.$  We shall define a directed graph $\overrightarrow{D} = \overrightarrow{D}(\C^*, x_1, \dots ,x_t)$ in a similar fashion as was done prior to Lemma \ref{lem-acyclic}: Let $V_{\overrightarrow{D}} = \{ z_1, \dots ,z_t \}.$ For all $i,j \in [t],$ where $i \ne j,$ $z_i \rightarrow z_j$ if $x_j \in C_{\beta(i)}^*.$ 

\begin{noname}
The directed graph $\overrightarrow{D}$ is acyclic.\label{noname0.5}
\end{noname}

\begin{proof}
To begin with, we make some useful observations.  

\begin{itemize}
\item[i)]  If $i <j$ and $z_i \rightarrow z_j$, then $a_{\beta(i)} \in A - A^\spadesuit.$
\begin{proof}
Suppose $i <j$ and $z_i \rightarrow z_j$.  We have $x_j \in C_{\beta(i)}^* \cap T_{\beta(j)}.$  Since $K$ is a simple path-chain, $x_j  \not\in T_{\beta(i)}$ and it follows that $T_{\beta(i)} \ne C_{\beta(i)}^*.$  Thus $a_{\beta(i)} \in A- A^{\spadesuit}$
\end{proof}
\item[ii)] If $i <j,$ $z_i \rightarrow z_j$, and $a_{\beta(i)} \in F_0 - A^*,$ then $a_{\beta(j)} \in A^{\spadesuit}.$
\begin{proof}
Assume that for some $i<j$, $z_i \rightarrow z_j$ where $a_{\beta(i)} \in F_0 - A^*.$  We have $x_j \in (C_{\beta(i)}^* - T_{\beta(i)})\cap T_{\beta(j)}.$  If $a_{\beta(j)} \in F_0 - A^*$, then 
it follows by Lemma \ref{lem-target} d) that $C_{\beta(i)}^* \cap T_{\beta(j)} = T_{\beta(i)} \cap T_{\beta(j)},$ which is a contradiction. Thus  $a_{\beta(j)} \not\in F_0 - A^*.$   If $a_{\beta(j)} \in A^*,$
then by Lemma \ref{lem-target} a), $C_{\beta(i)}^* \cap T_{\beta(j)} = \emptyset,$ a contradiction.  Thus $a_{\beta(i)} \in A^{\spadesuit}.$
\end{proof}
\item[iii)]  Suppose $z_i \rightarrow z_j$ where $\{ a_{\beta(i)}, a_{\beta(j)} \} \subseteq A^*.$ Then $F_{\beta(i)} \subseteq F_{\beta(j)}.$  
Moreover, if $i<j,$ then $F_{\beta(i)} \subset F_{\beta(j)}.$
\begin{proof}
 We have $x_j \in C_{\beta(i)}^* \cap T_{\beta(j)}.$  It follows by
Lemma \ref{lem-target} a) that $a_{\beta(i)} \in F_{\beta(j)}$ and thus $F_{\beta(i)} \subseteq F_{\beta(j)}.$  
If $i<j$, then $x_j \in (C_{\beta(i)}^*- T_{\beta(i)}) \cap T_{\beta(j)}.$ It follows by
Lemma \ref{lem-target} c) that $F_{\beta(i)} \subset F_{\beta(j)}.$
\end{proof} 
\item[iv)]  If $i<j,$ $z_j \rightarrow z_i$, and $a_{\beta(j)} \in A^{\spadesuit}$, then $a_{\beta(i)} \in A^{\spadesuit}.$
\begin{proof}
Assume that $i<j,$ $z_j \rightarrow z_i$, and $a_{\beta(j)} \in A^{\spadesuit}.$  Suppose $a_{\beta(i)} \in A - A^{\spadesuit}$.   Since $C_{\beta(j)}^* \cap T_{\beta(i)}\ne \emptyset,$ it follows by Lemma \ref{lem-target} a) that $a_{\beta(i)} \in F_0 - A^*$ and hence $F_{\beta(i)} = F_0.$  However, we now have that $a_{i+1}' = x_i \in T_{\beta(i)} \cap C_{\beta(j)}^* = F_{\beta(i)}' \cap C_{\beta(j)}^* \subseteq A_0 \cap T_{\beta(j)}.$  Since $a_{\beta(j)} \in A^{\spadesuit}$ and $j- (i+1) \le t-2,$ this contradicts one of our assumptions.  Therefore $a_{\beta(i)} \in A^{\spadesuit}$.
\end{proof} 
\end{itemize}

Suppose to the contrary that $\overrightarrow{D}$ contains a directed cycle $z_{i_1} \rightarrow z_{i_2} \rightarrow \cdots \rightarrow z_{i_\ell} \rightarrow z_{i_1}.$
We may assume that $i_1 = \min \{ i_j\ \big| \ j=1, \dots ,\ell \}.$  Since $i_1 < i_2$ and $z_{i_1} \rightarrow z_{i_2}$, it follows by i) that $a_{\beta(i_1)} \in A - A^{\spadesuit}.$
If $a_{\beta(i_2)} \in A^{\spadesuit},$ then it follows by i) that $i_2 > i_3.$  Furthermore, iv) implies that $a_{\beta(i_3)} \in A^{\spadesuit}.$ Applying the same reasoning we obtain that
that $i_2 > i_3 > \cdots > i_\ell > i_1$ and $a_{\beta(i_j)} \in A^{\spadesuit},\ j = 1 ,\dots \ell,$ yielding a contradiction
since $a_{\beta(i_1)} \in A - A^{\spadesuit}.$  It follows that $a_{\beta(i_2)} \in A - A^\spadesuit.$  In fact, the above reasoning shows that $\forall j\in [\ell],$ $a_{\beta(i_j)} \in A - A^{\spadesuit}$.  Suppose $a_{\beta(i_1)} \in F_0 - A^*.$ Then it follows by ii) that $a_{\beta(i_2)} \in A^{\spadesuit} ,$ yielding a contradiction. Thus $a_{\beta(1)} \in A^*.$  We observe that if $a_{\beta(2)} \in A^*,$
then it follows by iii) that $F_{\beta(1)} \subset F_{\beta(2)}.$  We claim that there exists $2 \le \ell' < \ell$ such that $\forall j \in [\ell'-1], \ a_{\beta(i_j)}\in A^*$ and $a_{\beta(i_{\ell'})} \not\in A^*.$  
For if no such $\ell'$ existed, then we would have $\forall j\in [\ell], \ a_{\beta(i_j)}\in A^*$ and consequently by iii), $F_{\beta(i_1)} \subset F_{\beta(i_2)} \subseteq \cdots \subseteq F_{\beta(i_\ell)} 
\subseteq F_{\beta(i_1)},$ a contradiction.
Since $a_{\beta(i_{\ell'})} \not\in A^*,$ it follows that $a_{\beta(i_{\ell'})} \in F_0-A^*.$
If $i_{\ell'} < i_{\ell'+1},$ then it follows by ii) that $a_{\beta(i_{\ell'+1})} \in A^{\spadesuit},$ a contradiction.  Thus $i_{\ell'+1} < i_{\ell'}.$   
Since $x_{\ell'+1} \in C_{\beta(i_{\ell'})}^* \cap T_{\beta(i_{\ell'+1})},$ it follows by Lemma \ref{lem-target} a), that $a_{\beta(i_{\ell' +1})} \not\in A^*.$  
Thus  $a_{\beta(i_{\ell'+1})} \in F_0 -A^*.$  Following the same reasoning, we obtain that 
$i_{\ell'} > i_{\ell'+1} > \dots > i_\ell > i_1$, and $\forall j\in \{ 1, \ell', \ell'+1, \dots , \ell\},$ $a_{\beta(i_j)} \in F_0 - A^*$.  This gives a contradiction because $a_{\beta(i_1)} \in A^*.$ 
\end{proof} 

It follows that by (\ref{noname0.5}) and Lemma \ref{lem-acyclic} that for all $i = 1, \dots ,t,$ there is a cocircuit $C^* \subseteq \bigcup_{j=1}^tC_{\beta(j)}^*$ for which $C^* \cap \{ x_1, \dots ,x_t \} = \{ x_i \}.$  In fact, for any nonempty subset 
$I \subseteq [t]$ we have that for all $i\in I$, there is a cocircuit $C^* \subseteq \bigcup_{j\in I}C_{\beta(j)}^*$ for which $C^* \cap \{ x_j \ \big| \ j\in I \} = \{ x_i \}.$
This observation will be used below.

\begin{noname}
For all $2 \le s \le t-1,$ there is a cocircuit $C^*$ in $N$ such that $$C^* \cap \left(  (A - \{ a_{\beta(s)}, \dots ,a_{\beta(t-1)} \}) \cup \{ a_{\beta(s)}', \dots ,a_{\beta(t)}' \}  \right)   \subseteq \{ a_{\beta(s-1)}, a_{\beta(s)}' \}.$$
\label{noname1.5}
\end{noname}

\begin{proof}
By the above, there is a cocircuit $C^*$ in $N$ where $C^* \subseteq \bigcup_{j=s-1}^{t-1} C_{\beta(j)}^*$ where $C^*\cap \{ x_{s-1}, \dots ,x_{t-1} \} = \{ x_{s-1} \}.$ That is, 
$C^* \cap \{ a_{\beta(s)}', a_{\beta(s+1)}', \dots ,a_{\beta(t)}' \} = \{ a_{\beta(s)}' \}.$
It follows that 
$C^*$ is a cocircuit for which $C^* \cap \left( (A - \{ a_{\beta(s)}, \dots ,a_{\beta(t-1)}) \cup \{ a_{\beta(s)}',  \dots ,a_{\beta(t)}' \}  \right)  \subseteq \{ a_{\beta(s-1)}, a_{\beta(s)}' \}.$
\end{proof}

\begin{noname}
There is a cocircuit $C^*$ in $N$ such that $C^* \cap \left( (A - \{ a_{\beta(1)}, \dots ,a_{\beta(t-1)} \} ) \cup \{ a_{\beta(1)}', \dots ,a_{\beta(t)}' \} \right) \subseteq \{ a_{\beta(t)}, a_{\beta(1)}' \}.$
\label{noname1.55}
\end{noname}

\begin{proof}
Again by the observation above,
there is a cocircuit $C^*$ in $N$ where $C^* \subseteq \bigcup_{j=1}^{t} C_{\beta(j)}^*$ and $C^* \cap \{ x_1, \dots ,x_t \} = \{ x_t \}.$  That is,
$C^* \cap \{ a_{\beta(1)}', a_{\beta(2)}', \dots ,a_{\beta(t)}' \} = \{ a_{\beta(1)}' \}.$
Thus  $C^*$ is a cocircuit for which $C^* \cap \left( (A - \{ a_{\beta(1)}, \dots ,a_{\beta(t-1)} \} ) \cup \{ a_{\beta(1)}', \dots ,a_{\beta(t)}' \} \right) \subseteq \{ a_{\beta(t)}, a_{\beta(1)}' \}.$
\end{proof}

\begin{noname}
$A - \{ a_{\beta(1)}, \dots ,a_{\beta(t-1)} \} + \{ a_{\beta(2)}', \dots ,a_{\beta(t)}' \}$ is a basis for $N.$\label{noname2}
\end{noname}

\begin{proof} We will first show that $A^1 = A - a_{\beta(t-1)} + a_{\beta(t)}'$ is a basis of $N.$ 
If $a_{\beta(t-1)} = a_{\beta(t)}',$ then $A^1 = A,$ and hence it is a basis.  Thus we may assume $a_{\beta(t-1)} \ne a_{\beta(t)}'.$  We see that $(A + a_{\beta(t)}')\cap C_{\beta(t-1)}^* = \{ a_{\beta(t-1)}, a_{\beta(t)}' \}$ and hence it follows that $A^1$ is a basis.  

Suppose that for some $2 \le s \le t-1$, $A^{t - s} = A - \{ a_{\beta(s)},  \cdots ,a_{\beta(t-1)} \} + \{ a_{\beta(s+1)}',  \cdots  ,a_{\beta(t)}' \}$ is a basis.  We claim that $A^{t-s+1} = A^{t-s} - a_{\beta(s-1)} + a_{\beta(s)}'$ is a basis.  If $a_{\beta(s)}' = a_{\beta(s-1)},$ then $A^{t-s+1} = A^{t-s}$ and the assertion is true.  Thus we may assume that 
$a_{\beta(s)}' \ne a_{\beta(s-1)}.$  By (\ref{noname1.5}), there is a cocircuit $C^*$ in $N$ for which $C^* \cap (A^{s-t} + a_{\beta(s)}') \subseteq \{ a_{\beta(s-1)}, a_{\beta(s)}' \}$.  Clearly we must have equality since the circuit $C$ in $A^{s-t} + a_{\beta(s)}'$ (containing $a_{\beta(s)}'$) also contains $a_{\beta(s-1)}$ (because $C_{\beta(s-1)}^* \cap C = \{ a_{\beta(s-1)}, a_{\beta(s)}'\}$) and $|C \cap C^*| \ge 2.$   Thus it follows that $A^{t-s+1}$ is a basis.
 Continuing inductively, we see that $A^{t-1} = A - \{ a_{\beta(1)}, \dots ,a_{\beta(t-1)} \} + \{ a_{\beta(2)}', \dots ,a_{\beta(t)}' \}$ is a basis of $N$.
\end{proof}
 
Finally, we shall show that
$A^t = A - \{ a_{\beta(1)}, \dots ,a_{\beta(t)} \} + \{ a_{\beta(1)}', \dots ,a_{\beta(t)}' \}$ is a basis for $N.$  By (\ref{noname2}), $A^{t-1} = A - \{ a_{\beta(1)}, \dots ,a_{\beta(t-1)} \} + \{ a_{\beta(2)}', \dots ,a_{\beta(t)}' \}$ is a basis for $N.$  By (\ref{noname1.55}), there is a cocircuit $C^*$ such that
$C^* \cap \left( (A - \{ a_{\beta(1)}, \dots ,a_{\beta(t-1)} \} ) \cup \{ a_{\beta(1)}', \dots ,a_{\beta(t)}' \} \right) \subseteq \{ a_{\beta(t)}, a_{\beta(1)}' \}.$  In fact, equality must hold since $A^{t-1}$ is a basis and hence $A^{t-1} + a_{\beta(1)}'$ has a unique circuit $C$ which contains $a_{\beta(1)}'$ and $C \cap C_{\beta(t)}^* = \{ a_{\beta(t)}, a_{\beta(1)}' \}.$  
Thus $C^* \cap A^{t-1} = \{ a_{\beta(t)}, a_{\beta(1)'} \}.$
It follows that $A^t = A^{t-1} - a_{\beta(t)} + a_{\beta(1)}'$ is a basis.
\end{proof}

\subsection{Finding a path-chain}

Our goal is to show that $W$ can be altered through through successive changes using alternating paths in such a way that if $W'$ is the resulting matching, then $V_{W'}$ is a basis for which the basis $A' = \psi(\overline{V}_{W'})$ has more elements in $F_0$ than $A$ does.  Furthermore,  we want $A'$ to have the property that for all flats $F\in \frakF$ having rank at most that of $F_0$,  $|A' \cap F| \ge |A \cap F|.$  Proposition \ref{pro-SimpleTangle} tells us which path-chains will guarantee that $A'$ is a basis.  In particular, we want to construct a path-chain $K = (P_1, \dots ,P_t)$, consistent with an $F_0$-target $\T = (T_1, \dots ,T_k)$ where for $\beta:[t] \rightarrow [k],\ \forall i \in [t],\ y_{\beta(i)} \xrightarrow[W]{P_{\beta(i)}} y_{\beta(i)}',$ $a_{\beta(t)} \in A^\spadesuit$ and $a_{\beta(1)}' = \psi(y_{\beta(1)}') \in A_0 \cap T_{\beta(t)}.$  Furthermore, we want to choose a minimal such path-chain.   

As before, let $A_0 = F_0 - \mathrm{cl}_N(F_0\cap A)$ and $Y_0 = [A_0].$
Let $\T = (T_1, \dots ,T_k)$ be an $F_0$-target.   As a first step in finding the desired path-chain, we will prove the following key lemma:

\begin{lemma}
Suppose $\eta > (k-1)^2 + k(k-1)^2 = (k+1)(k-1)^2.$
Let $A^1 \subseteq A^\spadesuit$ and let $A^2 \subseteq A  - A^{\spadesuit}.$  If $A_0 \not\subseteq \bigcup_{a_i \in A^1}T_i,$ then
$\delta_p \left( A_0 \cup \bigcup_{a_i \in A^1 \cup A^2}T_i \right) \ge (|A^1| +|A^2|+1)\eta - (k+1)(k-1)^2.$ \label{lem-needed1}
\end{lemma}

\begin{proof}
Let $A^{21} = A^2 - A^{*}$ and let $A^{22} = A^2 \cap A^{*}$.  By Lemma \ref{lem-B4} and the fact that $\forall a_i \in A^{22},\ T_i = F_i',$ we have 
\begin{equation}
\delta_p \left( \bigcup_{a_i \in A^{22}}T_i \right) = \delta_p \left( \bigcup_{a_i \in A^{22}}F_i'' \right)
 \ge |A^{22}|\eta - k(k-1)^2.\label{clm3eq1}
\end{equation}
Let $F = \bigcap_{a_i \in A^{1}}H_i.$  Then $r_N(F) \le k - |A^1|.$  Since $\forall a_i \in A^1,\ T_i = C_i^*,$ we also see that 
$$\delta_p\left( \bigcup_{a_i\in A^{1}} T_i \right) = \delta_p\left( \bigcup_{a_i\in A^{1}} C_i^* \right) = \delta_p(N) -\delta_p(F) = k\eta - \delta_p(F).$$
Let $\varepsilon_1\ge 0$ be such that $\delta_p(F) = r_N(F)\eta - \varepsilon_1.$ It follows by the previous equation and the inequality $r_N(F) \le k - |A^1|$ that
\begin{equation}
\delta_p\left( \bigcup_{a_i\in A^{1}} T_i \right) =
(k - r_N(F))\eta + \varepsilon_1 \ge |A^{1}|\eta + \varepsilon_1.\label{clm3eq2}
\end{equation}

 Let $F_0' = F\cap F_0$ and let $F_0'' = \mathrm{cl}(A \cap F_0).$  We note that $\forall a_i \in A^1,\ A\cap F_0 \subseteq H_i.$  Thus $A\cap F_0 \subseteq F$ and hence $F_0'' \subseteq F_0'.$   Since $\delta_p(F_0) \ge r_N(F_0)\eta - (k-1)^2$ and $\delta_p(F) = r_N(F)\eta - \varepsilon_1,$
 it follows by Lemma \ref{lem-B1} that 
 \begin{equation}
 \delta_p(F_0') \ge r_N(F_0')\eta - (k-1)^2 - \varepsilon_1.\label{clm3eq2.5}
 \end{equation}    
 Since by assumption, $A_0  \not\subseteq \bigcup_{a_i \in A^1}T_i = \bigcup_{a_i \in A^1}C_i^* = E(N) - F$, it follows that $F_0' \cap A_0 \ne \emptyset$ and hence
$r_N(F_0') > r_N(F_0'') = \omega_W(F_0).$  Note that for all $a_i \in A^{21},$ $F_i = F_0$ and thus $T_i = F_i' = C_i^* \cap F_0.$  For all $a_i \in A^{21},$  
let
$T_i' = T_i \cap F_0'' = F_i' \cap F_0'' = C_i^* \cap F_0''.$   Let $\delta_p(F_0'') = r_N(F_0'')\eta - \varepsilon_2.$
Then by (\ref{clm3eq2.5}) we have
\begin{align*} 
\delta_p(A_0 \cap F_0') &= \delta_p(F_0') - \delta_p(F_0'') \ge r_N(F_0')\eta - (k-1)^2 - \varepsilon_1 - r_N(F_0'')\eta + \varepsilon_2\\ &= (r_N(F_0') - r_N(F_0'')\eta
- (k-1)^2 - \varepsilon_1 + \varepsilon_2.\numberthis \label{clm3eq3}
\end{align*}
Furthermore, since $r_N\left(F_0'' \cap \bigcap_{a_i \in A^{21}}H_i \right) \le r_N(F_0'') - |A^{21}|$ and\\  $\delta_p(F_0'') = r_N(F_0'')\eta - \varepsilon_2$ we have
\begin{equation}
\delta_p\left( \bigcup_{a_i \in A^{21}}T_i' \right) =  \delta_p(F_0'') - \delta_p \left( F_0'' \cap \bigcap_{a_i \in A^{21}}H_i \right)
\ge \delta_p(F_0'') - (r_N(F_0'') - |A^{21}|)\eta
= |A^{21}|\eta - \varepsilon_2.\label{clm3eq4}
\end{equation}

Thus it follows by (\ref{clm3eq2}), (\ref{clm3eq3}) and (\ref{clm3eq4}) that
\begin{align*}
\delta_p\left(A_0 \cup \bigcup_{a_i \in (A^1 \cup A^{21})}T_i \right) &= \delta_p\left(\bigcup_{a_i \in A^{1}}T_i\right) + \delta_p(A_0 \cap F_0') + \delta_p\left( \bigcup_{a_i \in A^{21}}T_i' \right)\\
&\ge |A^{1}|\eta + \varepsilon _1+ 
(r_N(F_0') - r_N(F_0'')\eta
- (k-1)^2 - \varepsilon_1 + \varepsilon_2 + |A^{21}|\eta - \varepsilon_2\\
&= (|A^{1}| + |A^{21}|)\eta + (r_N(F_0') - r_N(F_0'')\eta - (k-1)^2\\
&\ge (|A^1|+|A^{21}| +1)\eta - (k-1)^2. \numberthis \label{clmeq5}
\end{align*}
It follows by Lemma \ref{lem-target} a) that for all $a_i \in A^1 \cup A^{21}$ and $a_j \in A^{22}$,  $T_i \cap T_j = \emptyset$.  
By Lemma \ref{lem-B4} we have $\delta_p\left(\bigcup_{a_i \in A^{22}}T_i \right) \ge |A^{22}|\eta - k(k-1)^2.$  Thus we have

 \begin{equation}\delta_p\left(A_0 \cup \bigcup_{a_i \in (A^1 \cup A^2)}T_i \right) \ge (|A^1|+|A^{21}|+1)\eta - (k-1)^2 + |A^{22}|\eta - k(k-1)^2 = (|A^1| + |A^2| +1)\eta - (k+1)(k-1)^2.
\end{equation}
This completes the proof of the lemma.
\end{proof}
We can now use the above lemma to find the desired path-chain.

\begin{proposition}
Suppose that $\eta > (k-1)^2 + k(k-1)^2 = (k+1)(k-1)^2.$   Then there is an function $\gamma: [s] \rightarrow [k]$ and a
path-chain consistent with $\T$, $K' = ( P_{\gamma(1)}, \dots ,P_{\gamma(s)}),$ where 

i) \ \ $\forall i \in [s],\ y_{\gamma(i)} \xrightarrow[W]{P_{\gamma(i)}} y_{\gamma(i)}'$  \ and \ ii) \ \ $a_{\gamma(s)} \in A^\spadesuit$ and $a_{\gamma(1)}' = \psi(y_{\gamma(1)}') \in A_0 \cap T_{\gamma(s)}.$

\label{pro-pathchain}
\end{proposition}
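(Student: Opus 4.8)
The goal is to build a path-chain, rooted at $Y_0 = \psi_V^{-1}(A_0)$ where $A_0 = F_0 - (F_0)_A$, that terminates at a vertex $y_{\gamma(s)}$ with $a_{\gamma(s)} \in A^\spadesuit = A - (F_0 \cup A^*)$ and whose first path lands in $A_0 \cap T_{\gamma(s)}$. The natural approach is to imitate the iterative construction of Claim~\ref{claim-pathtangle} in the proof of Proposition~\ref{pro-chooseW1}: grow a nested family of target-sets $Y_0 \subseteq Y_1 \subseteq \cdots$ by repeatedly using the deficit bounds of Section~\ref{sec-Fset} (Lemmas \ref{lem3}, \ref{obs6}, \ref{obs3}) to locate a new $Y_j$-path $y_{\beta(j)}\xrightarrow[W]{P_j} y_{\beta(j)}'$, where at stage $j$ the set $Y_j$ is enlarged by adding $Z'_{\beta(j)} = \psi_V^{-1}(T_{\beta(j)})$ (the target cone at $a_{\beta(j)}$) rather than the full cocircuit cone $Z_{\beta(j)}$. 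The process stops the first time the new starting vertex $y_{\beta(j)}$ has $a_{\beta(j)} \in A^\spadesuit$; that $P_j$ becomes the last path $P_{\gamma(s)}$, and the path-chain from $Y_0$ to $y_{\beta(j)}$ extracted inside the resulting tangle (as in the informal description following the definition of path-chain) is the desired $K'$.

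**Key steps in order.** First, I would set up the deficit bookkeeping: using Lemma~\ref{obs-lowerbound}, Lemma~\ref{lem-B2}, Lemma~\ref{lem-B3} and especially Lemma~\ref{lem-B4}, establish that for a suitable union of target-sets $\bigcup T_i$ over the indices used so far (together with $A_0$), the deficit $\delta_p$ of the corresponding vertex set in $V$ exceeds (number of paths so far)$\cdot\eta$ by at least a positive margin — this is exactly where the hypothesis $\eta > c_0 + c_1$ is consumed, since $\delta_p(A_0)\ge \delta_p(F_0) - \delta_p((F_0)_A) \ge r_N(F_0)\eta - c_0 - \omega_W(F_0)\eta \ge \eta - c_0 > c_1 \ge 0$ because $\omega_W(F_0)<r_N(F_0)$. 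Second, apply Lemma~\ref{obs6} (or Lemma~\ref{obs3}) at each stage: since the current $Y_j$ has deficit at least $j\eta$ plus a surplus bounded below independently of $\eta$, and the vertices of $\overline V_W$ already consumed have bounded total degree, there is a $Y_j$-path starting at a \emph{fresh} vertex $y_{\beta(j+1)} \in \overline V_W$, and moreover (by consistency of the target) its endpoint lies in $T_{\beta(j')}$ for some earlier $j'$ or in $A_0$. Third, argue termination: there are only $k$ vertices in $\overline V_W$, so after at most $k$ stages we must pick a starting vertex in $A^\spadesuit$ — unless we never leave $F_0 \cup A^*$, which I must rule out by a deficit-counting argument showing $\delta_p\big(\bigcup_{a_i \in F_0 \cup A^*} T_i \cup A_0\big)$ is too small to support $|F_0 \cup A^*|+1$ disjoint paths once $\omega_W(F_0) < r_N(F_0)$; this forces the new path's start to lie outside $F_0 \cup A^*$, i.e.\ in $A^\spadesuit$. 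Fourth, pass from the path-tangle to a path-chain by the standard extraction (minimality of the number of paths, plus Observation~\ref{obs-alternating1}, forces vertex-disjointness and the chain structure $y_{\beta(j)}' \in Z_{\beta(j-1)}'\setminus \bigcup_{j'<j-1}Z_{\beta(j')}'$), exactly as in the last paragraphs of the proof of Proposition~\ref{pro-chooseW1}, and relabel via $\gamma$.

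**The main obstacle.** The delicate point is the termination/escape argument in the third step: I need the deficit of the union of target-sets restricted to indices in $F_0 \cup A^*$ to be \emph{capped} (not just lower-bounded), so that it cannot feed another disjoint $Y_j$-path and the construction is forced to reach into $A^\spadesuit$. This requires showing $\delta_p\big(\psi_V^{-1}\big(\bigcup_{a_i\in F_0\cup A^*}T_i\big) \cup Y_0\big) \le \big(|F_0 \cup A^*| \big)\eta + O_k(1)$, using $\omega_W(F_0) < r_N(F_0)$ together with the structural facts in Lemma~\ref{lem-target} (parts (a)--(d)) about how the $T_i$ intersect — in particular that for $a_i \in A^*$ the set $T_i = F_i''$ sits inside $F_i \subseteq F_0$ and that the $F_i''$ over an equivalence class glue up to a flat of the right rank. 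Once that cap is in hand, comparing it against the $\ge (j+1)\eta$ lower bound that Lemma~\ref{lem-B4}/Lemma~\ref{lem-B3} would \emph{otherwise} give contradicts staying inside $F_0\cup A^*$, and the escape is forced. Everything else — the path-existence lemmas, the freshness of starting vertices, the tangle-to-chain reduction — is routine given Section~\ref{sec-Fset} and the template already executed in Proposition~\ref{pro-chooseW1}.
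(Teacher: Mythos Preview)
Your plan correctly tracks the paper's proof through the first two stages: growing a path-tangle rooted at $Y_0$ by repeatedly enlarging the target set via the $T_{\beta(j)}$'s and using Lemma~\ref{obs6} with the deficit bounds of Section~\ref{sec-Fset}. But it breaks down at the third and fourth steps, and the gap is the condition $a_{\gamma(1)}' \in T_{\gamma(s)}$ in the statement, which you never address.

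You propose to stop the tangle the \emph{first} time a start vertex lands in $A^\spadesuit$ and then extract the path-chain from $Y_0$ to that vertex. The extracted chain indeed has $a_{\gamma(1)}' \in A_0$, but there is no reason for $a_{\gamma(1)}'$ to lie in $T_{\gamma(s)} = C_{\gamma(s)}^*$; the cocircuit $C_{\gamma(s)}^*$ need not contain any particular element of $A_0$. The paper does not stop at the first $A^\spadesuit$ hit. It runs the tangle until the deficit inequality $\delta_p(\Upsilon_t) \ge (t+1)\eta - c_0 - c_1$ fails, and then proves (Claim~\ref{claim2}, a \emph{lower} bound, not a cap) that the failure forces the covering property
\[
A_0 \subseteq \bigcup_{j\in[t],\ a_{\beta(j)}\in A^\spadesuit} T_{\beta(j)}.
\]
This covering is exactly what makes the loop-back condition attainable: starting from any path-chain $K_1$ in the tangle ending at some $a_{\beta_1(t_1)}\in A^\spadesuit$, its foot $a_{\beta_1(1)}'\in A_0$ must lie in some $T_{\beta(i')}$ with $a_{\beta(i')}\in A^\spadesuit$, so one builds a second chain $K_2$ to $y_{\beta(i')}$ and considers the composition $K_1\circ K_2$; iterating and using finiteness, some composition eventually has its foot inside the top target, giving $a_{\gamma(1)}'\in A_0\cap T_{\gamma(s)}$.

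Your ``cap'' argument is also pointed in the wrong direction. You aim for an upper bound on $\delta_p\big(\psi_V^{-1}(\bigcup_{a_i\in F_0\cup A^*}T_i)\cup Y_0\big)$ to force an escape into $A^\spadesuit$; but Lemmas~\ref{lem-B3}--\ref{lem-B4} only give lower bounds of the form $|A'|\eta - c$, and without the ``$+1$'' coming from an \emph{uncovered} piece of $A_0$ (which is precisely what Claim~\ref{claim2} supplies), comparing $\ge j\eta - c$ with a putative cap $\le j\eta + O_k(1)$ yields no contradiction. Even if an escape into $A^\spadesuit$ could be forced this way, you would still need the covering of $A_0$ by the $A^\spadesuit$-targets and the composition step to secure $a_{\gamma(1)}'\in T_{\gamma(s)}$; that machinery is absent from your outline.
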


\begin{proof}
By Lemma \ref{obs-lowerbound}, $\delta_p(F_0) \ge r_N(F_0)\eta -(k-1)^2.$  As before, let $F_0'' = \mathrm{cl}(F_0\cap A)$.  We have that
$\delta_p(F_0'') \le r_N(F_0'')\eta = \omega_W(F_0)\eta.$  Given that $\omega_W(F_0'') < r_N(F_0),$ it follows that 
$$\delta_p(Y_0) = \delta_p(A_0) = \delta_p(F_0) - \delta_p(F_0'') \ge (r_N(F_0) - \omega_W(F_0))\eta - (k-1)^2 \ge \eta - (k-1)^2 > 0.$$ 
Since $\delta_p(Y_0) >0,$ it follows by Lemma \ref{lem3} that there is a $Y_0$-path $P_{i_1}$ from some vertex $y_{i_1} \in \overline{V}_W$ to a vertex $y_{i_1}' \in \tilde{Y}_0.$  Let $a_{i_1}' = \psi(y_{i_1}'),$ noting that $\delta_p(a_{i_1}') > 0.$   Let $Y_1 = [A_0 \cup T_{i_1}].$  Suppose that $\delta_p (Y_1) \ge 2\eta - (k+1)(k-1)^2.$  Since $\eta - (k+1)(k-1)^2 > 0$, it follows by Lemma \ref{obs6} that there is a $Y_1$-path from a vertex in $\overline{V}_W - Y_1$ to 
 to a vertex in $\tilde{Y}_1.$  Let 
$P_{i_2}$ be a such a $Y_1$-path from $y_{i_2}$ to $y_{i_2}'$ and let 
$a_{i_2}' = \psi(y_{i_2}').$  Let $Y_2 = [A_0 \cup T_{i_1} \cup T_{i_2}].$  If $\delta_p (Y_2) \ge 3\eta - (k+1)(k-1)^2 ,$ then given $\eta - (k+1)(k-1)^2  >0$,
Lemma \ref{obs6} implies that there is a $Y_2$-path from a vertex in $ \overline{V}_W - Y_2$ to a vertex in $\tilde{Y}_2.$   Let
Let $P_{i_3}$ be a $Y_2$-path from $y_{i_3}$ to $y_{i_3}'$. Let 
$a_{i_3}' = \psi(y_{i_3}').$   Since the process must eventually terminate, we may assume that for some $t>0$ we have generated $Y_1, \dots , Y_t$ 
and $\delta_p(Y_t) < (t+1)\eta - (k+1)(k-1)^2 .$   Let $\beta: [t] \rightarrow [k]$ be the function defined such that $\beta(j) := i_j, \ j \in [t].$  Let $K = (P_{\beta(1)}, \dots ,P_{\beta(t)})$ be the corresponding path-tangle (which is consistent with $\T$ and rooted at $Y_0$) where for all $j\in [t],\ y_{\beta(j)} \xrightarrow [W] {P_{\beta(j)}} y_{\beta(j)}'.$

Let $A^1 = \{ a_{\beta(i)} \in A^{\spadesuit}\ \big| \ i \in [t] \}$ and let $A^2 = \{ a_{\beta(i)} \in A-A^{\spadesuit}\ \big| \ i \in [t] \}$.  Since\\ $\delta_p(Y_t) < (t+1)\eta - (k+1)(k-1)^2,$ it follows by
Lemma \ref{lem-needed1} that $\displaystyle{A_0 \subseteq \bigcup_{a_{\beta(j)} \in A^1} T_{\beta(j)}}$.   We may assume that $t$ is minimal with respect to this property; that is,
if one excludes the path $P_{\beta(t)}$ from $K$, the the resulting path-tangle no longer has this property.  In particular, this means that $a_{\beta(t)} \in A^\spadesuit.$

 Let $K_1 = (P_{\beta_1(1)}, \dots ,P_{\beta_1(t_1)})$ be a path-chain in $K$ from $y_{\beta(t)}$ to $Y_0$ where $P_{\beta_1(1)}$ terminates at an element $y_{\beta_1(1)}' \in Y_0.$
 %
%
If $y_{\beta_1(1)}' \in [T_{\beta_1(t_1)}],$ then $K'=K_1$ is the desired
path-chain.  Thus we may assume that $y_{\beta_1(1)}' \not\in [T_{\beta_1(t_1)}].$  Since $\displaystyle{A_0 \subseteq \bigcup_{a_{\beta(j)} \in A^1} T_{\beta(j)}}$, there exists $i' \in [t]$
for which $a_{\beta(i')} \in A^{1}$ and $y_{\beta_1(1)}' \in [T_{\beta(i')}].$  Now let $K_2 = (P_{\beta_2(1)}, \dots ,P_{\beta_2(t_2)})$ be a path-chain in $K$ from $y_{\beta(i')}$ to $Y_0$ where $P_{\beta_2(1)}$
 terminates at an element $y_{\beta_2(1)}' \in Y_0.$
 If  $y_{\beta_2(1)}' \in [T_{\beta_2(t_2)}],$ then 
$K'= K_2$ is the desired path-chain.
 On the other hand, if $y_{\beta_2(1)}' \in [T_{\beta_1(t_1)}],$ then $K'= K_1 \circ K_2,$ the composition of $K_2$ with $K_1$,  is the desired path-chain.  Thus we may assume that $y_{\beta_2(1)}' \not\in [T_{\beta_1(t_1)}] \cup [T_{\beta_2(t_2)}].$  
 
 Continuing, there exists $i'' \in [t]$ for which $a_{\beta(i'')}\in A^{1}$ and $y_{\beta_2(1)}' \in [T_{\beta(i'')}].$
 Let $K_3 = (P_{\beta_3(1)}, \dots ,P_{\beta_3(t_3)})$ be a path-chain in $K$ from $y_{\beta(i'')}$ to $Y_0$ where $P_{\beta_3(1)}$ terminates at an element $y_{\beta_3(1)}' \in Y_0.$
If  $y_{\beta_3(1)}' \in [T_{\beta_3(t_3)}],$ then
$K' =K_3$
is the desired path-chain.  If $y_{\beta_3(1)}' \in [T_{\beta_2(t_2)}],$ then $K' =K_2 \circ K_3$ is the desired path-chain.  If $y_{\beta_3(1)}' \in [T_{\beta_1(t_1)}],$ then $K' = K_1 \circ K_2 \circ K_3$ is the desired path-chain.  Thus we may assume that 
$y_{\beta_3(1)}' \not\in \bigcup_{i=1}^3 [T_{\beta_i(t_i)}].$  Seeing as this process can not continue indefinitely, we must eventually arrive at the desired path-chain $K'$.
\end{proof}

\subsection{Proof of Theorem \ref{the-augment}}

By Proposition \ref{pro-pathchain}, there is a function $\gamma: [s] \rightarrow [k]$ for which $K' = (P_{\gamma(1)}, \dots ,P_{\gamma(s)})$
 is a path-chain where $a_{\gamma(s)} \in A^\spadesuit$ and $a_{\gamma(1)}' = \psi(y_{\gamma(1)}') \in A_0 \cap T_{\gamma(s)}.$
Among such path-chains,  we may assume $K'$ has a minimum number of paths. By minimality, the paths $P_1, \dots ,P_{s}$ are vertex-disjoint;  if two of these paths intersected, then Observation \ref{obs-alternating1} implies that one can reduce the number of paths in $K'.$  By minimality, we also see that for $j=2, \dots ,s,$ $a_{\gamma(j)}' = \psi(y_{\gamma(j)}')\in T_{\gamma(j-1)} \backslash \bigcup_{j' < j-1} T_{\xi(j')}.$  Thus $K'$ is a simple path-chain. The minimality of $K'$ also implies that for all $0 \le j-i \le s-2$ either $a_{\gamma(i)}' \not\in A_0\cap T_{\gamma(j)}$ or $a_{\gamma(j)} \not\in A^\spadesuit.$ 
It now follows by Proposition \ref{pro-SimpleTangle} that $A'= A - \{ a_{\gamma(1)}, \dots ,a_{\gamma(s)} \} + \{ a_{\gamma(1)}', \dots ,a_{\gamma(s)}' \}$ is a basis for $N.$  
Thus the matching $W' = W \triangle E(P_{\gamma(1)} \cup \cdots \cup P_{\gamma(s)})$ is such that $V_{W'}$ is a basis for $M$ (since $\psi(\overline{V}_{W'}) = A'$).   We claim that $W \preceq W'.$   Since $K'$ is consistent with $\T$ (an $F_0$-target), we have that for all $i \in [s-1],$ $a_{\gamma(i+1)}'\in T_{\gamma(i)}.$  Suppose $a_{\gamma(i)} \in A-A^{\spadesuit} .$  Then $T_{\gamma(i)} = F_{\gamma(i)}'$ and this means that $a_{\gamma(i+1)}' \in F_{\xi(i)}'.$  Thus when we replace $a_{\gamma(i)}$ by $a_{\gamma(i+1)}'$, the new basis element $a_{\gamma(i+1)}'$ will also belong to $F_{\gamma(i)}.$   Because of this, we ensure that $W \preceq W'.$  Furthermore, since we replace $a_{\gamma(s)} \in A^{\spadesuit}$ by 
$a_{\gamma(1)}' \in F_0 \cap T_{\gamma(s)},$ we have that $\omega_{W'}(F_0) > \omega_W(F_0)$.
   
Note that for each path $P_{\gamma(i)},$ $\delta_p(y_{\gamma(i)}') >0$.  As such, we have $\forall v \in \overline{V}_{W'}, \ \delta_p(v) >0.$
If $\omega_{W'}(F_0) < r_N(F_0)$, then we can continue finding a matching $W''$ for which $\omega_{W''}(F_0) > \omega_{W'}(F_0)$ and $W' \preceq W''.$  Continuing the above process, one can find a matching $W'''$ for which $W \preceq W'''$, $\omega_{W''}(F_0) = r_N(F_0).$   If there are flats $F\in \frakF$ for which $\omega_{W'''}(F) < r_N(F),$ then we can pick one such flat of smallest rank, say $F_0'$, and repeat the previous process with $F_0'$ in place of $F_0$, and $W'''$ in place of $W.$  
Repeating this procedure, we eventually arrive at a matching $W^*$ for which $\forall F\in \frakF,\ \omega_{W^*}(F) = r_N(F).$

\section{Proof of Theorem \ref{the-main}}

We may assume that we have removed matchings $W_1, \dots ,W_p$ from $G$ where for all $i\in [p],$ $V_{W_i}$ is a basis in $M.$  Assuming $\eta = n - p > (k+1)(k-1)^2,$ it follows by Theorem \ref{the-augment}, one can find a matching $W= W_{p+1}$ in $G_p$ for which $V_W$ is a basis, and  i) $\forall v\in V, \delta_{p+1}(v) \ge 0,$ and ii) $\forall F\in \frakF, \ \omega_{W} = r_N(F).$
  It follows that (\ref{eqn-Fineq}) holds for all flats $F$ in $N$ and this in turn implies that for all flats $F,$ $\delta_{p+1}(F) \le r_N(F) (\eta -1).$
By Lemma \ref{lem-augment}, the graph $G_{p+1} = G_p - W_{p+1}$ contains a matching $W = W_{p+2}$ for which $V_{W_{p+2}}$ is a basis of $M.$ If $\eta -1 > (k+1)(k-1)^2$, then the process can continue where one alters $W$ etc. as before.  Continuing, it is seen that one can construct at least $n - (k+1)(k-1)^2 -1 > n - k^3$ matchings $W_1, W_2, \dots $ for which $V_{W_i}$ is a basis of $M.$

\bibliographystyle{abbrv}
\bibliography{rota2}

\end{document}